\def\bb#1\eb{\textcolor{blue}
{#1}} %
\def\br#1\er{\textcolor{red}
{#1}} %
\def\bbr#1\ebr{\textcolor{brown}
{#1}} %
\newcommand{\df}{{\rm d}}
\newcommand{\R}{\mathds R}
\newcommand{\N}{\mathds N}
\newcommand{\Z}{\mathds Z}
\newcommand{\LL}{\mathds L}
\newcommand{\be}{\begin{equation}}
\newcommand{\ee}{\end{equation}}
   \def\br#1\er{\textcolor{red}{#1}} %
      \def\bb#1\eb{\textcolor{blue}{#1}} %
\title[Some criteria for WRS completeness and Cauchy hyp.]{Some criteria for Wind Riemannian completeness \\ and existence of Cauchy hypersurfaces}
\author[M. A. Javaloyes]{Miguel Angel Javaloyes}
\address{Departamento de 
Matem\'aticas, \hfill\break\indent
Universidad de Murcia, \hfill\break\indent
Campus de Espinardo,\hfill\break\indent
30100 Espinardo, Murcia, Spain}
\email{majava@um.es}
\author[M. S\'anchez]{Miguel S\'anchez}
\address{Departamento de Geometr\'{\i}a y Topolog\'{\i}a, Facultad de Ciencias, \hfill\break\indent
 Universidad de Granada,\hfill\break\indent
 Campus Fuentenueva s/n,
 \hfill\break\indent 18071 Granada, Spain}
\email{sanchezm@ugr.es}
\date{31.01.2017}
\thanks{2010 {\em Mathematics Subject Classification:} Primary  53C60, 53C22 \\
\textbf{Key words:} Globally hyperbolic spacetime, Killing vector field, stationary and SSTK spacetimes, Finsler metrics, Randers and
Kropina metrics, Zermelo navigation, wind Finslerian structure.}
\begin{document}
\newtheorem{thm}{Theorem}[section]
\newtheorem{prop}[thm]{Proposition}
\newtheorem{lemma}[thm]{Lemma}
\newtheorem{cor}[thm]{Corollary}
\theoremstyle{definition}
\newtheorem{defi}[thm]{Definition}
\newtheorem{notation}[thm]{Notation}
\newtheorem{exe}[thm]{Example}
\newtheorem{conj}[thm]{Conjecture}
\newtheorem{prob}[thm]{Problem}
\newtheorem{rem}[thm]{Remark}
\newtheorem{conv}[thm]{Convention}
\newtheorem{crit}[thm]{Criterion}

\begin{abstract}
Recently, a link between Lorentzian and Finslerian Geometries has been  carried out, leading to the notion of {\em wind Riemannian structure} (WRS), a generalization of Finslerian Randers metrics. Here, we further develop  this notion and its applications to spacetimes, by introducing some characterizations and criteria for the completeness of  WRS's. 

As an application, we consider a general class of spacetimes admitting a time function $t$ generated by the flow of a complete Killing vector field  
(generalized standard stationary spacetimes or, more precisely, SSTK  ones)  and derive simple criteria ensuring that its slices $t=$ constant are Cauchy. Moreover, a brief summary on the Finsler/Lorentz link  for readers with some acquaintance in Lorentzian Geometry,  plus some simple examples  in Mathematical Relativity, are provided. 
\end{abstract}

\maketitle

\tableofcontents

\section{Introduction}

In the last years, a fruitful link between   Lorentzian and  Finslerian geometries has been refined  
more and more;  indeed, ramifications to different areas such as control theory, have also  appeared. In this framework, our purpose here is twofold. First,  a brief summary on the subject for an audience of Lorentzian geometers is provided. Then, a new application   
for Lorentzian Geometry will be obtained. Namely, we will consider a big class of spacetimes admitting a time function $t$ (generated by the flow of a complete Killing  vector field $K=\partial_t$), and we will characterize when its slices $t=$ constant are Cauchy hypersurfaces, providing also simple criteria that ensure this property.  

The refinements of the Lorentz/Finsler link can be understood in three steps:

\begin{enumerate}
\item Firstly, consider  a product  $\R\times M$ endowed with a standard Lorentzian product metric $g=-dt^2+g_0$ (see below for more precise notation and definitions). Clearly, all the properties of the spacetime will be encoded in the Riemannian metric $g_0$. Even more, if one considers a standard static metric $g=-\Lambda dt^2+g_0$ ($\Lambda>0$), then all the conformal properties of the spacetime can be studied in the conformal representative $g/\Lambda$ and, thus, they are encoded in the Riemannian metric $g_R=g_0/\Lambda$ (see for example \cite[Theorem 3.67]{BEE} or \cite{Har} and references therein).
\item Secondly,  the previous case can be generalized by allowing $t$-independent cross-terms, i.e. the stationary metric $g=-\Lambda dt^2+2\omega dt + g_0$ ($\Lambda>0$, $\omega$ 1-form on $M$). In this case, the conformal properties are provided by a Randers metric (formula \eqref{e_Randers} below), which is a special class of Finsler metric characterized by a Riemannian metric $g_R$ and a vector field $W$ with $|W|_R<1$  (according to the interpretation of Zermelo problem, \cite[Prop. 1.1]{BRS}).  This correspondence is carried out in full detail in \cite{CJS} and many related properties can be seen in \cite{CGS, CaJaMa,  GHWW, FHS, JLP15}. 
\item Finally, suppress the restriction $\Lambda>0$ in the previous case (SSTK splitting).  Now, the conformal structure is still characterized  by a pair $(g_R,W)$, but the restriction $|W|_R<1$ does not apply. From the Finslerian viewpoint, this yields a {\em wind Riemannian structure} (WRS), which is a generalization of Randers metrics; even more, this also suggests the subsequent generalization  of all Finsler metrics: {\em wind Finslerian structures}. 
Such new structures were introduced and extensively studied in \cite{CJSwind} and further developments and applications for Finslerian geometry have been carried out in \cite{JS, JV}.     
\end{enumerate}
The study of SSTK splittings in \cite{CJSwind} includes quite a few  topics. Among them, relativists may be interested in a link between the well-known (and non-relativistic) problem of  {\em Zermelo navigation} and the relativistic {\em Fermat principle}; indeed, this link allows one to solve both problems beyond their classical scopes; moreover, it shows connections with the so called {\em analogue gravity} \cite{BLV}. On the other hand, 
global properties of causality of SSTK spacetimes are neatly characterized by their Finslerian counterparts. So, the exact step in the ladder of causality of SSTK spacetimes is described sharply in terms of the associated WRS. In particular, the fact that the slices $t=$ constant  are Cauchy hypersurfaces becomes equivalent to the  (geodesic) completeness of the WRS.

Even though such results are very accurate, a difficulty appears from a practical viewpoint. WRS's  are not standard known elements, as they have been introduced only recently. Therefore,  to determine whether  they satisfy or not some geometrical properties  may be  laborious.   
 Due to this reason, our purpose here is to introduce some simple notions and results which allow one to check easily properties of WRS's. As a first approach, we will focus on results about completeness because,  on the one hand, completeness has a direct translation to spacetimes in terms of Cauchy hypersurfaces and, on the other, it is a basic natural property with applications to Finslerian Geometry (see \cite{JS}). However, the introduced tools are expected to be applicable for other properties too, and some hints are made in the examples at the end.
 
 Our task is organized as follows. In Section \ref{s_2} we give an overview on the Lorentz/Finsler correspondence for readers with some knowledge on Lorentzian Geometry (compare with the overview in   \cite{JS}, written for a more Finslerian audience). More precisely, in subsection \ref{s_2.1} we describe the class of spacetimes to be studied (generalized standard stationary  or, more precisely, SSTK spacetimes).   The generality of this class, which includes many typical relativistic spacetimes, is stressed, and the way to obtain a (non-unique) {\em SSTK splitting} is detailed. Then, in subsection \ref{s_2.2}, the relation between the conformal  classes of spacetimes 
and the properties of associated Finslerian structures is introduced gradually, with increasing generality in the Finslerian tools: Riemannian/Randers/Randers-Kropina/WRS. In subsection \ref{s_2.3}, as a toy application of the correspondence,  we consider a causally-surprising example of static spacetime constructed recently by Harris \cite{Har}, and we show its  Finslerian counterpart, explaining the corresponding curious properties which appear in the distance of the  associated Finsler manifold.

Section \ref{s_3} makes both, to introduce a geometric element for the practical study of WRS's and to prove our main result on completeness. subsection \ref{s_3.1} explains  the      
precise technical notions on WRS balls, geodesics and completeness, extracted from \cite{CJSwind}. Then, in subsection \ref{s_3.2} we introduce a new key ingredient,  the {\em extended conic Finsler metric} $\bar F$ associated with any WRS. We emphasize 
that,   as analyzed in \cite{CJSwind}, any WRS  $\Sigma$ determines both,  a {\em conic Finsler}  metric $F$ and a {\em Lorentz-Finsler} one $F_l$. The former differs from a 
standard Finsler metric only in the fact that its domain is just an open conic region of the tangent bundle (this is a possibility with independent interest, see
 \cite{JSpisa}). However, our aim here is to show that, for any such $\Sigma$, the conic metric  $F$ admits a natural extension $\bar F$ to 
 the boundary of the conic region; moreover, $\bar F$ has  an associated exponential, distance-type function (called here {\em $\bar F$-separation}), Cauchy sequences, etc.   Then, in subsection \ref{s_3.3}. 
 we give our main result, Theorem \ref{HopfRinow}, which contains a double goal: to 
 prove that the completeness of the WRS $\Sigma$ is fully  equivalent to the 
 completeness of $\bar F$, and to  show that $\bar F$ satisfies a set of properties in the spirit of Hopf-Rinow Theorem. These properties will allow us to determine if $\bar F$ and, then, $\Sigma$, are complete. 

In Section \ref{s_4} we derive some  applications by using the previous Theorem \ref{HopfRinow}. In subsection \ref{s_4.1}, some  simple criteria for checking whether a WRS is complete or not are provided. These criteria  are  stated in both, natural WRS elements and the original SSTK metric. So, one obtains also  criteria which ensure whether the slices of an SSTK spacetime are Cauchy, in an easily manageable way. Finally, subsection \ref{s_4.2} ends with some further concrete examples and prospects  in Mathematical Relativity.

\section{A Lorentzian overview on Wind Riemannian Structures}\label{s_2}

\subsection{Revisiting SSTK spacetimes}\label{s_2.1}

We will follow standard conventions and background results  as in \cite{BEE, MinSan}. In particular, 
a spacetime
$(L,g)$ is  a time-oriented connected Lorentzian manifold $(-,+,\dots , +)$ of dimension $n  +1,  n  \geq 1$.  Lightlike vectors $v\in TL$ will satisfy both, $g(v,v)=0$ and $v\neq 0$ (while  null vectors would be allowed to be equal to 0) so, causal vectors, being either timelike or lightlike, also exclude 0.  Except when otherwise specified,   $(L,g)$ will  also be  stably causal, so that it admits a {\em temporal function} $t: L\rightarrow \R$ according to  \cite{BS05,SaBrasil} (that is, $t$ is smooth and onto,  with timelike past-directed gradient $\nabla t$ and, in particular, a time function). 
     Let us start with a simple result when a vector field $K\in \mathfrak{X}(L)$ complete and transversal to the slices of $t$ can be chosen.

\begin{prop}\label{p_2.1}
Let $t$ be a temporal function for $(L,g)$ and $K\in \mathfrak{X}(L)$ complete  with flow $\varphi:\R\times L\rightarrow L$ such that $\df t(K)\equiv 1$. Putting $M:=t^{-1}(0)$, the map
$$
\Phi:\R\times M \rightarrow L, \qquad (\bar t,x)\mapsto \varphi_{\bar t}(x) 
$$ 
is a diffemorphism such that: (a) $t\circ \Phi:\R\times M\rightarrow \R$ agrees with the natural projection and (b) $t\circ \Phi$ is also a temporal function for the pull-back metric $\Phi^*(g)$ and time orientation  induced on $\R\times M$ via $\Phi$ (which 
will be denoted simply as $t$). Then, $\Phi_*(\partial_t)=K$ and 
\begin{equation}\label{lorentz0}
\Phi^*(g)=- \Lambda^L \df t^2+\omega^L\otimes \df t+\df
t\otimes \omega^L+ g_0^L,
\end{equation}
where $\Lambda^L$, $\omega^L$ and $g_0^L$ are, respectively, the smooth
real function $g(K,K)\circ \Phi$, a one form whose kernel includes $\partial_t$ and a positive semi-definite metric tensor whose radical is spanned by $\partial_t$, all of them defined on $\R\times M$. 

\end{prop}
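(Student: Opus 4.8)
The plan is to exhibit $\Phi$ as the conjugation of the flow $\varphi$ with the translation flow on $\R\times M$, to read off the diffeomorphism and time-function statements from elementary flow theory, and to reduce the metric splitting \eqref{lorentz0} to a pointwise linear-algebra decomposition together with the standard fact that the level sets of a temporal function are spacelike.

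First I would record the basic ODE fact. Since $\nabla t$ is timelike it is in particular nowhere zero, so $M=t^{-1}(0)$ is a nonempty (as $t$ is onto) embedded hypersurface. For fixed $p\in L$ the curve $s\mapsto t(\varphi_s(p))$ has derivative $\df t(K)|_{\varphi_s(p)}\equiv 1$, hence $t(\varphi_s(p))=t(p)+s$; in particular, for $x\in M$ this gives $t(\Phi(\bar t,x))=\bar t$, which is statement (a). Next, $\Phi$ is a diffeomorphism: it is smooth because $K$ is complete, so $\varphi$ is smooth on all of $\R\times L$; it is injective because $\Phi(\bar t_1,x_1)=\Phi(\bar t_2,x_2)$ forces $\bar t_1=\bar t_2=:\bar t$ (apply $t$ and (a)) and then $x_1=x_2$ (apply $\varphi_{-\bar t}$); it is onto because, given $p\in L$, the point $x:=\varphi_{-t(p)}(p)$ lies in $M$ by the ODE fact and $\Phi(t(p),x)=p$; and its inverse $p\mapsto(t(p),\varphi_{-t(p)}(p))$ is smooth into $\R\times L$ and takes values in $\R\times M$, hence is smooth into $\R\times M$ since $M$ is embedded. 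Endowing $\R\times M$ with $\Phi^*g$ and the time orientation pulled back by $\Phi$, the map $\Phi$ is then a time-orientation-preserving isometry onto $(L,g)$; therefore $t\circ\Phi$, which by (a) is the canonical projection, is smooth, onto and has timelike past-directed gradient (the pull-back of $\nabla^g t$), giving (b). Finally, $\df\Phi_{(\bar t,x)}(\partial_t)=\frac{\df}{\df s}\big|_{s=0}\varphi_s(\varphi_{\bar t}(x))=K(\Phi(\bar t,x))$ by the flow property, so $\Phi_*(\partial_t)=K$.

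For \eqref{lorentz0} I would use the splitting $T(\R\times M)=\R\,\partial_t\oplus TM$: every symmetric $2$-tensor $b$ on $\R\times M$ decomposes uniquely as $b=b(\partial_t,\partial_t)\,\df t^2+\df t\otimes\eta+\eta\otimes\df t+h$ with $\eta(\partial_t)=0$ and $h(\partial_t,\cdot)=0$, taking $\eta:=b(\partial_t,\cdot)-b(\partial_t,\partial_t)\,\df t$ and $h:=b-b(\partial_t,\partial_t)\,\df t^2-\df t\otimes\eta-\eta\otimes\df t$. Applying this to $b=\Phi^*g$, and using $\Phi^*g(\partial_t,\partial_t)=g(K,K)\circ\Phi$, defines $\Lambda^L$, $\omega^L:=\eta$ and $g_0^L:=h$; by construction $\omega^L(\partial_t)=0$ and $\partial_t$ lies in the radical of $g_0^L$.

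The one genuinely Lorentzian point, and the step I expect to be the only nontrivial one, is that $g_0^L$ is positive semidefinite with radical exactly $\R\,\partial_t$. Since $g_0^L$ annihilates $\partial_t$, this is equivalent to the restriction of $g_0^L$ to each slice $\{\bar t\}\times M$ being positive definite; but on such a slice $g_0^L$ coincides with $\Phi^*g$, and $\Phi$ carries the slice isometrically onto the level set $t^{-1}(\bar t)\subset L$. That level set is spacelike, because $\nabla^g t$ is timelike and $g$-orthogonal to it ($\df t$ vanishes on its tangent vectors) and the orthogonal complement of a timelike vector in a Lorentzian vector space is positive definite. Hence the restrictions of $g_0^L$ to the slices are Riemannian metrics, which yields both claimed properties and completes the verification of \eqref{lorentz0}; the remaining steps are routine flow theory.
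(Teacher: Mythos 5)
Your proof is correct and follows essentially the same route as the paper's: the identity $t(\varphi_s(p))=t(p)+s$ drives both the bijectivity of $\Phi$ and statement (a), and the remaining claims are exactly the verifications the paper dismisses as following easily. The only cosmetic difference is that you exhibit an explicit smooth inverse $p\mapsto\bigl(t(p),\varphi_{-t(p)}(p)\bigr)$ rather than combining bijectivity with the transversality/local-diffeomorphism argument, and you spell out in full the pointwise tensor decomposition and the spacelikeness of the slices $t^{-1}(\bar t)$.
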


\begin{proof}  $\Phi$ is onto because  $z=\Phi(t(z), \varphi_{-t(z)}(z))$ for all  $z\in L$, and one-to-one because the equality $\df t(K)\equiv 1$ forbids   the unique integral curve of $K$ through $z$ to close. 
That equality also implies $\Phi (\{t_0\}\times M) = t^{-1}(t_0)$ plus the transversality of $K$ and the slices $t^{-1}(t_0)$, so that $\Phi$  becomes a (local) diffeomorphism, and all the other assertions follow easily.  
\end{proof}
As already done for the natural projection $\R\times M\rightarrow \R$, the diffeomorphism $\Phi$ will be omitted with no further mention  in the remainder.
\begin{rem} 
(1) As any timelike vector field $T$ satisfies that $\df t(T)$ cannot vanish, the normalized vector $K=T/\df t(T)$ satisfies $\df t(K)\equiv 1$; in particular, one can choose $K=\nabla t/g(\nabla t, \nabla t)$.
However, Proposition \ref{p_2.1} shows that the completeness of $K$ is much more difficult to obtain, even taking into account that, in general, $K$ is not assumed to be  timelike.  

Indeed, no complete $K$ can exist in  a stably causal spacetime $(L,g)$ such that $L$ is not a smooth product manifold $\R\times M$  as, for example,  the spacetime obtained by removing two points from Lorentz-Minkowski spacetime $\LL^2$. However, the completeness of $K$ may not be achieved even when $L$ is a product. Indeed, this is the case of  $\LL^2\setminus\{0\}$: this spacetime  is diffeomorphic to $\R\times S^1$ and  its natural coordinate $t=x^0$ is a temporal function; nevertheless, it contains the non-homeomorphic slices $t=0$ and $t=1$. 

(2) The choice $K= \partial_t+2\partial _x$ in the globally hyperbolic strip $$L=\{(t,x)\in \LL^2: x=2t+\lambda, \forall \lambda \in (-1,1),\forall t\in\R\}$$  shows that a complete choice of $K$ may be possible even when no complete timelike choice exists. However, in a globally hyperbolic spacetime one can always choose a   temporal function $t$ whose levels are Cauchy hypersurfaces \cite{BS05}. In this case, the choice $K=\nabla t/g(\nabla t, \nabla t)$ suggested above is necessarily complete (notice that temporal functions are assumed to be onto, and the integral curves of  $K$ must cross all the slices of $t$, due to its Cauchy character).
As a last observation, notice that the onto character of a temporal function can be deduced when a complete $K$ satisfying  $\df t(K)\equiv 1$ exists. 

(3) A straightforward computation  shows that a metric on $\R\times M$ written as in \eqref{lorentz0} is Lorentzian if and only if for each  tangent vector $u$ to $\R\times M$ such that $g^L_0(u,u)\neq 0$, 
\begin{equation}
\label{lorentzian0} \Lambda +\frac{\omega^L(u)^2}{g^L_0(u,u)}>0,
\end{equation}
(see for example \cite{CJSwind}, around formula (28)).
\end{rem}
In what follows, we will be interested  in the case that  $K$ in Proposition \ref{p_2.1} is a Killing vector field. 
In this case, all the metric elements in the proof  of 
Proposition~\ref{p_2.1} (plus the bound \eqref{lorentzian0}) are independent of the flow of $K$ and, thus, of the coordinate $t$, yielding directly:

\begin{cor}\label{cor_SSTKsplitting} For any spacetime $(L,g)$ endowed with a temporal function $t$ and a complete Killing vector field $K$ such that $\df t(K)\equiv 1$, the splitting $L=\R\times M$ in Proposition \ref{p_2.1}
can be sharpened metrically into
\begin{equation}\label{e_SSTKsplitting}
g=- (\Lambda \circ \pi) \df t^2+\pi^*\omega\otimes \df t+\df
t\otimes \pi^*\omega+\pi^*g_0 ,
\end{equation}
where $\Lambda$, $\omega$ and $g_0$ are, respectively, a smooth
real function (the ``lapse''), a one form (the ``shift'') and a Riemannian metric  on $M$,
$\pi:\R\times M\rightarrow M$ is the natural projection, and
$\pi^*$ the pullback operator. Moreover, the relation
\begin{equation}
\label{lorentzian} \Lambda +|\omega|^2_{0}>0
\end{equation}
holds,  being $|\omega|_0$ the pointwise  $g_0$-norm of $\omega$.
\end{cor}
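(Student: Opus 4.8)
The plan is to bootstrap from Proposition~\ref{p_2.1}, which already produces the diffeomorphic splitting $L=\R\times M$ and the metric form \eqref{lorentz0}, and then to bring in the extra hypothesis that $K$ is Killing. Recall from that proposition that $\Phi_*(\partial_t)=K$; consequently the flow $\varphi_s$ of $K$ corresponds under $\Phi$ to the $t$-translation $\psi_s\colon (\bar t,x)\mapsto(\bar t+s,x)$ of $\R\times M$, i.e.\ $\Phi\circ\psi_s=\varphi_s\circ\Phi$ ($\Phi$ carries the flow of $\partial_t$ to the flow of its image $K$). Since $K$ is Killing, each $\varphi_s$ is an isometry of $(L,g)$, hence each $\psi_s$ is an isometry of $(\R\times M,\Phi^*g)$; equivalently, $\mathcal{L}_{\partial_t}(\Phi^*g)=0$, i.e.\ $\Phi^*g$ is invariant under $t$-translations.

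The next step is to deduce that this invariance forces each of the three summands in \eqref{lorentz0} to be $t$-independent on its own. This follows because \eqref{lorentz0} is the unique decomposition adapted to the product structure: contracting $\Phi^*g$ twice with $\partial_t$ recovers $-\Lambda^L$, contracting it once with $\partial_t$ and once with a slice-tangent vector recovers $\omega^L$ (using $\omega^L(\partial_t)=0$ and that $\partial_t$ lies in the radical of $g_0^L$), and restricting it to the slices recovers $g_0^L$; all three operations commute with the $\psi_s$. Alternatively one checks directly that $\Lambda^L=g(K,K)\circ\Phi$ is constant along the integral curves of $K$, since $\mathcal{L}_K(g(K,K))=(\mathcal{L}_Kg)(K,K)+2g(\mathcal{L}_K K,K)=0$, and that the remaining components are flow-invariant in the same way. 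Now a $t$-independent function, a $t$-independent one-form annihilating $\partial_t$, and a $t$-independent symmetric $2$-tensor with radical $\mathrm{span}(\partial_t)$ are, respectively, the $\pi$-pullbacks of a function $\Lambda$, a one-form $\omega$ and a symmetric $2$-tensor $g_0$ on $M$; and $g_0$ is positive definite because $g_0^L$ is positive semidefinite with radical exactly $\mathrm{span}(\partial_t)$. Substituting into \eqref{lorentz0} gives \eqref{e_SSTKsplitting}.

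It remains to obtain \eqref{lorentzian}. Here I would observe that the levels of $t$ are spacelike (recall $\nabla t$ is timelike, $t$ being temporal), so their induced metric---which by \eqref{e_SSTKsplitting} equals $g_0$---is positive definite and $\det g_0>0$. The Schur complement of the $g_0$-block in the matrix of \eqref{e_SSTKsplitting} equals $-(\Lambda+|\omega|_0^2)$, so $\det g=-\det(g_0)\,(\Lambda+|\omega|_0^2)$; since $g$ is Lorentzian, $\det g<0$, which is \eqref{lorentzian}. Equivalently, this is just the specialization of the Lorentzianity criterion \eqref{lorentzian0} of the Remark above: it has become $t$-independent, and $\omega^L(u)^2/g_0^L(u,u)$ attains the value $|\omega|_0^2$ on the $g_0$-dual of $\omega$ (Cauchy--Schwarz), so \eqref{lorentzian0} reduces to \eqref{lorentzian}.

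The single place where I expect to need real care is the middle step: making precise that ``$K$ Killing'' translates into ``no $t$-dependence of the metric coefficients in the adapted chart'', and that $t$-independent tensors of the three stated types descend to genuine tensors on $M$. Everything else is either already contained in Proposition~\ref{p_2.1} or elementary linear algebra.
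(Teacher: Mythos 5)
Your proposal is correct and follows essentially the same route as the paper, which simply observes that the Killing property of $K$ makes all the metric elements of Proposition \ref{p_2.1} (and the bound \eqref{lorentzian0}) invariant under the flow of $K$, hence $t$-independent, so that they descend to $M$ and \eqref{lorentzian0} specializes to \eqref{lorentzian}. You have merely filled in the details the paper leaves implicit (the identification of the summands via contractions with $\partial_t$, and the Schur-complement/Cauchy--Schwarz derivation of the bound), all of which check out.
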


Following  \cite{CJSwind}, let us introduce the notion of SSTK spacetime. 
\begin{defi} A  spacetime  
$(L,g)$  is
{\em standard  with  a  space-transverse  Killing
vector field (SSTK)}
if it admits a (necessarily non-vanishing) complete Killing vector
field
$K$
and a spacelike hypersurface
$S$
(differentiably) transverse to
$K$
which is crossed exactly once
by every integral curve of
$K$.  
\end{defi}

\begin{cor}
A spacetime $(L,g)$ is SSTK if and only if it admits a temporal function $t$ and a complete Killing vector field $K$ such that $\df t(K)\equiv 1$. In this case, the global splitting provided by Corollary \ref{cor_SSTKsplitting} will be called 
an {\em SSTK splitting}. 
\end{cor}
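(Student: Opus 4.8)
The plan is to establish the two directions of the ``if and only if'' separately, noting that the key geometric equivalence has already been done; what remains is essentially bookkeeping about when a transverse spacelike hypersurface can be traded for a temporal function.

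\textbf{($\Leftarrow$)} Suppose $(L,g)$ admits a temporal function $t$ and a complete Killing vector field $K$ with $\df t(K)\equiv 1$. Then $K$ is automatically non-vanishing (since $\df t(K)\equiv 1$). Set $M:=t^{-1}(0)$; this is a smooth hypersurface because $0$ is a regular value of $t$ (temporal functions have nowhere-vanishing gradient). It is spacelike because $\nabla t$ is timelike, so its $g$-orthogonal complement --- the tangent space to each slice --- is spacelike. Transversality of $K$ to $M$ is immediate from $\df t(K)\equiv 1$. Finally, by the argument already given in the proof of Proposition~\ref{p_2.1}, the map $\Phi:\R\times M\to L$, $(\bar t,x)\mapsto\varphi_{\bar t}(x)$, is a diffeomorphism, and in particular every integral curve of $K$ meets $M$ exactly once (onto because $z=\Phi(t(z),\varphi_{-t(z)}(z))$, injectively because $\df t(K)\equiv 1$ prevents an integral curve from returning to the slice). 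Hence $(L,g)$ is SSTK with $S=M$.

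\textbf{($\Rightarrow$)} Suppose $(L,g)$ is SSTK, with complete Killing field $K$ and spacelike hypersurface $S$ crossed exactly once by each integral curve of $K$. The goal is to produce a temporal function $t$ with $\df t(K)\equiv 1$; the natural candidate is the ``time elapsed along the flow from $S$''. Concretely, using that each integral curve meets $S$ exactly once, define $t:L\to\R$ by declaring $t(z)$ to be the unique real number with $\varphi_{-t(z)}(z)\in S$; equivalently, $t$ is the first coordinate under the identification $L\cong\R\times S$, $(\bar t,x)\mapsto\varphi_{\bar t}(x)$, which is a diffeomorphism by the same transversality-plus-completeness argument as above. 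By construction $\df t(K)\equiv 1$, so $t$ is smooth (the flow is smooth and $S$ is a smooth embedded hypersurface, so the identification is a diffeomorphism), onto, and $K=\partial_t$ is nowhere tangent to the slices $\{t=\text{const}\}=\varphi_{\text{const}}(S)$. It remains to check that $t$ is temporal, i.e. that $\nabla t$ is timelike, equivalently that every slice $\{t=c\}$ is spacelike. Since each such slice is $\varphi_c(S)$ and $\varphi_c$ is an isometry (as $K$ is Killing), and $S$ is spacelike, every slice is spacelike; hence $\nabla t$ is timelike (and, up to the sign convention, past-directed can be arranged by replacing $t$ with $-t$ and $K$ with $-K$ if necessary, which preserves all hypotheses). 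Therefore $t$ is a temporal function with $\df t(K)\equiv 1$.

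\textbf{Main obstacle.} The only non-formal point is the smoothness and well-definedness of the ``flow-parameter'' function $t$ in the ($\Rightarrow$) direction: one must know that the assignment $z\mapsto(t(z),\varphi_{-t(z)}(z))$ is a genuine diffeomorphism $L\to\R\times S$, which is exactly the content (up to relabelling) of Proposition~\ref{p_2.1} once we have checked the transversality hypothesis $\df t(K)\equiv 1$. That hypothesis, however, holds tautologically for the $t$ we construct, so the real work is confirming that ``crossed exactly once by every integral curve'' together with transversality of $S$ to $K$ forces the flow map to be a global diffeomorphism --- standard, but it is where the completeness of $K$ and the topology of $L$ genuinely enter. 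The statement about the SSTK splitting being ``global'' is then just a restatement of Corollary~\ref{cor_SSTKsplitting} applied to this $(t,K)$.
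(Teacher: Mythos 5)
Your proof is correct. Note, though, that the paper's own proof of this corollary is essentially a two-line citation: the forward direction (``SSTK $\Rightarrow$ existence of $t$ and $K$'') is delegated to \cite[Proposition 3.3]{CJSwind}, which asserts that a spacetime is SSTK if and only if it admits an SSTK splitting, and the converse is read off from Corollary \ref{cor_SSTKsplitting}. Your ($\Leftarrow$) direction coincides with the paper's (the slice $t^{-1}(0)$ is spacelike because $\nabla t$ is timelike, and Proposition \ref{p_2.1} gives transversality and the ``crossed exactly once'' property), while your ($\Rightarrow$) direction is a self-contained reconstruction of the cited external result: you build $t$ as the flow parameter from $S$, verify $\df t(K)\equiv 1$ tautologically, and deduce temporality from the fact that the slices $\varphi_c(S)$ are isometric images of the spacelike $S$. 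This is the standard argument and is what the citation hides; the one point worth being explicit about (which you correctly flag) is that bijectivity of $(\bar t,x)\mapsto\varphi_{\bar t}(x)$ uses ``crossed exactly once'' for injectivity and ``crossed at least once'' plus completeness for surjectivity, with transversality supplying the local diffeomorphism property. Your sign adjustment ($t\mapsto -t$, $K\mapsto -K$) to make $\nabla t$ past-directed is a legitimate fix for the orientation convention and preserves all hypotheses, so there is no gap.
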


\begin{proof} By \cite[Proposition 3.3]{CJSwind} $(L,g)$ is an SSTK spacetime if and only if it admits an SSTK splitting and, therefore, it admits $t$ and  $K=\partial_t$ as in the statement. The converse follows from Corollary \ref{cor_SSTKsplitting}.
\end{proof}
Recall that a vector field $X$ (and, then, the full spacetime) is called {\em stationary} (resp., {\em static}; {\em  stationary-complete}; {\em   static-complete})  when it is Killing and  timelike (resp., additionally: the orthogonal distribution $X^\perp$ is involutive; $X$ is complete; both conditions 
occur)\footnote{Sometimes, our stationary spacetimes are called {\em strictly stationary} in the literature about Mathematical Relativity, and
 the name {\em stationary} is used for a Killing vector field $K$ that is timelike at some point (see for example \cite[Definition 12.2]{Lu}). 
 Indeed, the name {\em SSTK spacetime} is introduced just to avoid confusions with the previous ones: no restriction on the causal character of $K$ is assumed (whenever its flow is temporal) but the global structure must split.}. 
 When $\Lambda>0$ in \eqref{e_SSTKsplitting} the spacetime is called {\em standard stationary} and if, additionally, $\omega=0$, standard static. Any stationary or static spacetime can be written locally as a standard one. A stationary-complete   
spacetime is (globally) standard stationary if and only if it satisfies the mild causality condition of being  distinguishing  \cite{JSstat} (or, as pointed out in \cite[Prop. 2.13]{Har}, and taking into account \cite[Prop. 1.2]{Har}, future distinguishing).
 In this case, the spacetime  is not only stably causal but also causally continuous; however, the conditions to ensure that  a static-complete spacetime is standard static are more involved, see \cite{SS, Gu}.

\subsection{Conformal geometry and the appearance of Finslerian structures} \label{s_2.2}
Now, let us consider the conformal structure for an SSTK splitting \eqref{e_SSTKsplitting}. This is equivalent to compute the (future-directed) lightlike directions  and, because of $t$-independence, we can consider just the points on the slice $M=t^{-1}(0)$. Thus, the relevant vectors at each $p\in M$ can be written  with natural identifications  as $u_p=\partial_t|_p + v_p$ where $v_p\in T_pM$ and one must assume:
\begin{equation}
0=g(u_p,u_p)=-\Lambda(p)+2\omega(v_p)+g_0(v_p,v_p)=0\label{convex}
\end{equation}

\begin{figure}
\tdplotsetmaincoords{70}{110}
\begin{tikzpicture}[scale=1.8,tdplot_main_coords]
    \filldraw[
        draw=brown,%
        fill=brown!10,%
    ]          (0,0,0)
            -- (4,0,0)
            -- (4,4,0)
            -- (0,4,0)
            -- cycle;
       \filldraw[
        draw=blue,%
        fill=blue!10,%
    ]          (1,2,0)
            -- (2,2,0)
            -- (2,3,0)
            -- (1,3,0)
            -- cycle;
          \filldraw[
        draw=blue,%
        fill=blue!10,%
    ]          (0.5,0.5,0)
            -- (1.5,0.5,0)
            -- (1.5,1.5,0)
            -- (0.5,1.5,0)
            -- cycle;
            \filldraw[
        draw=blue,%
        fill=blue!10,%
    ]          (2.5,0.25,0)
            -- (2.5,1.25,0)
            -- (3.5,1.25,0)
            -- (3.5,0.25,0)
            -- cycle;
           \draw[thick] (1,1,0) circle (0.3cm and 0.15cm);
           \filldraw[blue] (1,1.1,0) circle (0.5pt);
           \draw[thick] (1,1,1) circle (0.3cm and 0.15cm);
           \draw[thick,->] (1,1.1,0) -- (1,1.1,1) node[left,anchor=north east]{$\partial_t$};
           \draw[thick,blue] (1,1.1,0)  -- (1+0.01,1.1+0.22,1);
           \draw[thick,magenta,->] (1,1.1,0)  -- (1-0.05,1.1-0.445,1-0.05) node[left,anchor=north east]{$u_p$};
           \draw[thick,->] (1,1.1,0)  -- (1-0.05,1.1-0.445,0) node[left,anchor=north east]{$v_p$};
           \draw[thick] (3,0.7,0) circle (0.3cm and 0.15cm);
           \filldraw[blue] (3,0.395,0) circle (0.5pt);
           \draw[thick] (3,0.7,1) circle (0.3cm and 0.15cm);
           \draw[thick,->] (3,0.395,0) -- (3,0.395,1) node[left,anchor=north east]{$\partial_t$};
           \draw[thick,magenta,->] (3,0.395,0)  -- (3,0.395+0.62,1) node[left,anchor=north west]{$u_p$};
           \draw[thick,->] (3,0.395,0)  -- (3,0.395+0.62,0) node[left,anchor=north west]{$v_p$};
           \draw[thick] (1.5,2.5,0) circle (0.3cm and 0.15cm);
           \filldraw[blue] (1.5,2.1,0) circle (0.5pt);
           \draw[thick,->] (1.5,2.1,0) --(1.5,2.1,1) node[left,anchor=north east]{$\partial_t$};
           \draw[thick] (1.5,2.5,1) circle (0.3cm and 0.15cm);
           \draw[thick,magenta,->] (1.5,2.1,0) -- ((1.5,2.1+0.71,1) node[left,anchor=north west]{$u_p$};
           \draw[thick,->] (1.5,2.1,0) -- ((1.5,2.1+0.71,0) node[left,anchor=north west]{$v_p$};
           \draw[thick,blue] (1.5,2.1,0)  -- (1.5,2.1+0.088,1);
            \draw[dashed] (1,1.1,0) -- (1,1.1,2);
             \draw[dashed] (3,0.395,0)  -- (3,0.395,2) ;
              \draw[dashed] (1.5,2.1,0) -- (1.5,2.1,2);
    \draw[very thick,->,brown] (0,0,0) -- (0,0,2) node[left,anchor=south]{$t$};
     \draw[
        draw=red,dashed%
    ]          (0,0,1)
            -- (4,0,1)
            -- (4,4,1)
            -- (0,4,1) node[left,anchor=south]{$t=1$} 
            -- cycle;
\end{tikzpicture}
\caption{\label{figurecones} We show the lightlike vectors $u_p=\partial_t|_p+v_p$ of $(T_pL,g_p)$ with $v_p$ tangent to $M=t^{-1}(0)$. There are  three different possibilities according to the causal character of $\partial_t$. All the other lightlike vectors are proportional to one of these. The subset of vectors $v_p$ forms an ellipsoid in $T_pM$.}
\end{figure}
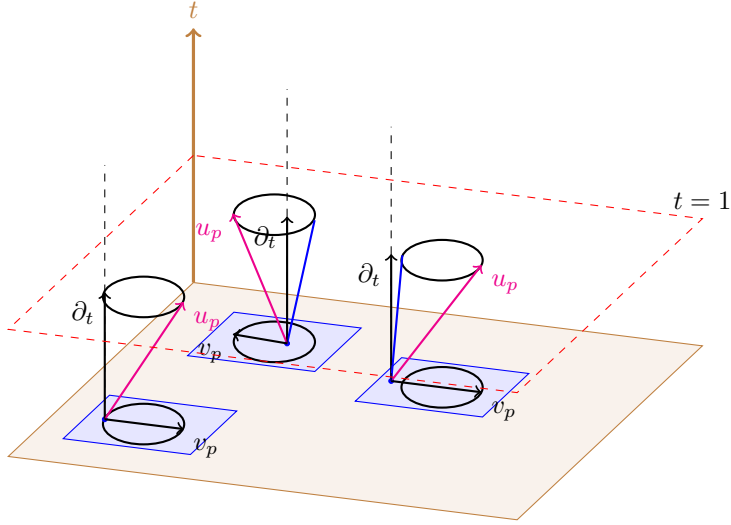

\begin{lemma} \label{l} The set $\Sigma_p$ which contains all $v_p\in T_pM$ satisfying \eqref{convex} is a $g_0$-sphere of center $W_p$, where $g_0(W_p,\cdot )=-\omega_p$, and (positive) radius $r_p:=\sqrt{\Lambda(p)+| \omega_p |^2_0}$ $= \sqrt{\Lambda(p)+| W_p |^2_0}$ (recall \eqref{lorentzian}).
\end{lemma}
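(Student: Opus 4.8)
The plan is to recognize \eqref{convex} as a single quadratic equation in the unknown $v_p$ with respect to the positive-definite form $g_0|_{T_pM}$, and to bring it to the standard form of a metric sphere by completing the square. First I would use the non-degeneracy of $g_0$ on $T_pM$ to introduce $W_p\in T_pM$ as the vector metrically dual to $-\omega_p$, that is, the unique vector satisfying $g_0(W_p,u)=-\omega_p(u)$ for every $u\in T_pM$ (this is just the Riesz representation of the linear functional $-\omega_p$). In particular $\omega(v_p)=-g_0(W_p,v_p)$, and, since raising/lowering indices with $g_0$ is an isometry, $|W_p|_0=|\omega_p|_0$.

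Substituting $\omega(v_p)=-g_0(W_p,v_p)$ into \eqref{convex}, the defining condition of $\Sigma_p$ becomes $g_0(v_p,v_p)-2g_0(W_p,v_p)-\Lambda(p)=0$. Adding and subtracting $g_0(W_p,W_p)$ and using bilinearity and symmetry of $g_0$, this is equivalent to $g_0(v_p-W_p,\,v_p-W_p)=\Lambda(p)+|W_p|_0^{2}$. Hence $v_p\in\Sigma_p$ if and only if $|v_p-W_p|_0^{2}=\Lambda(p)+|W_p|_0^{2}$, which is exactly the equation of a $g_0$-sphere centered at $W_p$ with squared radius $\Lambda(p)+|W_p|_0^{2}=\Lambda(p)+|\omega_p|_0^{2}$.

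It then remains to check that this quantity is a genuine \emph{positive} real number, so that $\Sigma_p$ is a nonempty sphere rather than a single point or the empty set: this is precisely where hypothesis \eqref{lorentzian} is used, since $\Lambda(p)+|\omega_p|_0^{2}>0$ yields $r_p:=\sqrt{\Lambda(p)+|\omega_p|_0^{2}}>0$, and the two displayed formulas for $r_p$ agree because $|W_p|_0=|\omega_p|_0$. There is essentially no serious obstacle in this argument; the only points requiring care are getting the sign right in the definition of $W_p$ (so that the cross term is absorbed correctly when completing the square) and invoking \eqref{lorentzian} to exclude a degenerate ``sphere'', while the rest is the routine completion of the square for a positive-definite quadratic form.
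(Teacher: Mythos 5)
Your proposal is correct and follows essentially the same route as the paper: define $W_p$ as the $g_0$-dual of $-\omega_p$, complete the square to rewrite \eqref{convex} as $g_0(v_p-W_p,v_p-W_p)=\Lambda(p)+|\omega_p|_0^2$, and invoke \eqref{lorentzian} for positivity of the radius. No issues.
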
 
\begin{proof}
Putting $w_p=v_p-W_p$ one has:
$$
g_0(w_p,w_p)=g_0(W_p,W_p)-2 g_0(W_p,v_p)+g_0(v_p,v_p)=| \omega_p |^2_0+
2\omega(v_p)+g_0(v_p,v_p),
$$
so, \eqref{convex} holds if and only if $g_0(w_p,w_p)=\Lambda(p)+| \omega_p |^2_0$.
\end{proof}

\begin{rem}\label{r_norm} As we will be interested only in the conformal structure of the  spacetime, we can choose  the conformal metric $\tilde g= \Omega g$ were 
 $\Omega= 1/\left(\Lambda + | \omega |^2_0 \right)$, and its corresponding lapse $\tilde \Lambda$, shift $\tilde \omega$  and Riemannian metric $\tilde g_0$  will satisfy
 \begin{equation}
 \label{e_norm}  \tilde r_p   :=\sqrt{\tilde \Lambda(p)+| \tilde \omega_p |^2_{\tilde g_0}}\equiv 1 . 
 \end{equation} 
 From the definition of $W_p$, its independence of  conformal changes becomes apparent. Consistently, we can attach the conformally invariant Riemannian metric
 \begin{equation}\label{e_gr}
 g_R=g_0/\left(\Lambda + | \omega |^2_0 \right)
 \end{equation}
 to the SSTK splitting.  Indeed, then  $\tilde\Lambda=\Omega\Lambda$ and $| \tilde \omega |^2_{\tilde g_0}= \Omega | \omega |^2_0$,  thus, $\Sigma_p$ will always be given by a $g_R$-sphere of radius 1.
 \end{rem}
Let us call the pair  $(g_R,W)$ composed by a Riemannian metric $g_R$ and a vector field $W\in\frak{X}(M)$ on $M$, {\em Zermelo data}. We can summarize and systematize the previous results as follows:
\begin{prop}\label{p2.8}
(1) For each SSTK splitting \eqref{e_SSTKsplitting} there exist a smooth  hypersurface $\Sigma\subset TM
$ and Zermelo data $(g_R,W)$, both univocally determined and 
invariant under pointwise conformal 
transformations $g\mapsto \Omega g, \Omega>0$, such that:
\begin{itemize}
\item[{\it (i)}] $\Sigma$ is transverse to each tangent space $T_pM, p\in M,$ and all the lightlike directions on $M$ are spanned by the vectors 
$\partial_t + v$ such that $ v\in \Sigma$.

\item[{\it (ii)}] At each point $p\in M$, $\Sigma_p:= \Sigma \cap T_pM$ is the $g_R$-sphere of center $W_p$ and radius 1. 
\end{itemize}

(2) Conversely, for each Zermelo data  $(g_R, W)$  on $M$ there exists an SSTK-splitting  (unique up to pointwise conformal transformations) whose associated  Zermelo data by the previous point (1) are  $(g_R, W)$. 

(3) Moreover, a smooth hypersurface $\Sigma\subset TM$ can be written as the set of all the  unit $g_R$-spheres   with  center  $W_p$ at each point $p\in M$  for some Zermelo data $(g_R,W)$ if and only if $\Sigma$ is transverse to all $T_pM, p\in M$ and each $\Sigma_p:=\Sigma\cap T_pM$ is an ellipsoid  in the coordinates  induced by any basis of $T_pM$.  
\end{prop}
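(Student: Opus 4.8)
The three parts are, in essence, a systematic repackaging of Lemma~\ref{l} and Remark~\ref{r_norm} together with a couple of transversality and smoothness checks; the plan is therefore to dispose of the algebraic identifications quickly and to locate the real work in part~(3).

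\emph{Part (1).} Starting from an SSTK splitting \eqref{e_SSTKsplitting}, I would define $W\in\mathfrak{X}(M)$ by $g_0(W_p,\cdot)=-\omega_p$, take $g_R$ as in \eqref{e_gr}, and set $\Sigma:=F^{-1}(0)$ where $F(v_q):=-\Lambda(q)+2\omega(v_q)+g_0(v_q,v_q)$. By Lemma~\ref{l}, $\Sigma_p:=\Sigma\cap T_pM$ is the $g_0$-sphere of center $W_p$ and positive radius $r_p=\sqrt{\Lambda(p)+|\omega_p|_0^2}$, hence, after rescaling the metric by $r_p^{-2}$ (cf.\ Remark~\ref{r_norm}), the $g_R$-unit sphere centered at $W_p$; this is (ii). For (i), the key computation is that the restriction of $dF$ to a fibre $T_pM$ at a point $v_p\in\Sigma_p$ is $w\mapsto 2g_0(v_p-W_p,w)$, which is nonzero because $v_p\neq W_p$. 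This shows at once that $F$ is a submersion along $\Sigma$ (so $\Sigma$ is a smooth hypersurface), that each $\Sigma_p$ is a regular hypersurface of $T_pM$, and, by a dimension count, that $T_{v_p}\Sigma+T_{v_p}(T_pM)=T_{v_p}(TM)$, i.e.\ $\Sigma\pitchfork T_pM$. The statement on lightlike directions follows because any lightlike $u\in T_pL$ ($p\in M$) has $\df t(u)=g(\nabla t,u)\neq 0$ (as $\nabla t$ is timelike and $u$ causal), so $u$ is proportional to some $\partial_t+v$, $v\in T_pM$, and then $g(u,u)=0$ is exactly \eqref{convex}, i.e.\ $v\in\Sigma_p$; the converse is clear. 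Finally, $\Sigma$ is forced by (i), being the set of ``spatial parts'' of lightlike directions; once $\Sigma$ is fixed, $W_p$ is the affine center of the ellipsoid $\Sigma_p$ and $g_R|_p$ is the unique positive-definite form with unit sphere $\Sigma_p-W_p$, so all three are univocally determined; their invariance under $g\mapsto\Omega g$ is the bookkeeping already in Remark~\ref{r_norm} (the relations $g_0(W_p,\cdot)=-\omega_p$ and \eqref{e_gr} are unchanged, and lightlike cones are conformally invariant).

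\emph{Part (2).} I would simply exhibit the normalized splitting attached to given Zermelo data $(g_R,W)$: put $g_0:=g_R$, $\omega:=-g_R(W,\cdot)$, $\Lambda:=1-|W|_{g_R}^2$. Then $\Lambda+|\omega|_0^2\equiv 1>0$, so \eqref{lorentzian} holds and \eqref{e_SSTKsplitting} defines a metric $g$ on $\R\times M$; moreover $\partial_t$ is a complete Killing field and $\nabla t=-\partial_t-W$ is timelike (a one-line check), so $t$ is a temporal function and, by the corollary characterizing SSTK splittings, $g$ is a genuine SSTK splitting, whose associated Zermelo data by part~(1) are exactly $(g_R,W)$. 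For uniqueness up to pointwise conformal transformation, observe that if an SSTK splitting $(\Lambda,\omega,g_0)$ has Zermelo data $(g_R,W)$ then, setting $\Omega:=\Lambda+|\omega|_0^2>0$, part~(1) forces $g_0=\Omega g_R$, $\omega=-\Omega\,g_R(W,\cdot)$, $\Lambda=\Omega(1-|W|_{g_R}^2)$; hence its metric is $\Omega$ times the normalized one, with $\Omega$ necessarily a function on $M$ alone (so that $\partial_t$ remains Killing).

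\emph{Part (3).} The ``only if'' direction is contained in part~(1): a smooth union of $g_R$-unit spheres centered at the $W_p$'s is transverse to all fibres and meets each fibre in an ellipsoid, a property preserved by any linear change of coordinates. For ``if'', transversality already makes each $\Sigma_p=\Sigma\cap T_pM$ a smooth hypersurface of $T_pM$, hence (being an ellipsoid by hypothesis) the level set $\{v:g_R|_p(v-W_p,v-W_p)=1\}$ for a unique center $W_p$ and a unique positive-definite $g_R|_p$; the only remaining issue is that $p\mapsto(g_R|_p,W_p)$ be smooth, and this is where I expect the only genuine care to be needed. I would argue locally in linear fibre coordinates on $TM|_U\cong U\times\R^n$: near a generic $v_0\in\Sigma_{p_0}$ the vertical line through $v_0$ is a proper transversal secant of the ellipsoid, so the implicit function theorem applied to the smooth hypersurface $\Sigma$ at its two intersection points produces two smooth branches $\phi_\pm(v',p)$, which are precisely the two roots of the quadratic $v^n\mapsto F_p(v',v^n)=0$ cutting out $\Sigma_p$. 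Reading the symmetric functions $\phi_++\phi_-$ and $\phi_+\phi_-$ as polynomials in $v'$ with coefficients smooth in $p$ recovers all coefficients of $F_p$ up to the scalar $\alpha(p)$ (the $(v^n)^2$-coefficient); solving the ensuing linear system then yields $W_p$ smoothly, and the constant term yields $1/\alpha(p)$, hence $\alpha(p)$ and $g_R|_p$, smoothly. Gluing these canonically defined local data finishes the proof.
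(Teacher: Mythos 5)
Your argument is correct and keeps the paper's overall architecture: part (1) is Lemma \ref{l} plus Remark \ref{r_norm} applied pointwise, part (2) is the explicit normalized splitting $g_0=g_R$, $\omega=-g_R(W,\cdot)$, $\Lambda=1-|W|^2_R$, and part (3) reduces to taking $W_p$ as the centroid and $\Sigma_p-W_p$ as the unit sphere. Two local steps are done differently, and both of your versions are sound. For transversality, the paper first proves that the indicatrix $\Sigma_R=F_R^{-1}(1)$ is transverse to the fibres by an Euler-homogeneity contradiction (the radial vertical vector cannot lie in the kernel of $\df F_R$) and then observes that the fibrewise translation by $W$ preserves transversality of $\Sigma=\Sigma_R+W$; you instead differentiate the defining quadratic $F(v)=-\Lambda+2\omega(v)+g_0(v,v)$ along the vertical subspace, getting $w\mapsto 2g_0(v_p-W_p,w)\neq 0$, which yields in one stroke the smoothness of $\Sigma$, the regularity of each $\Sigma_p$, and transversality via the codimension-one count. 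This is more direct and makes the smoothness of $\Sigma$ in (1) explicit rather than implicit. For the smoothness of $p\mapsto (g_R|_p,W_p)$ in the ``if'' direction of (3) --- the only genuinely delicate point --- the paper delegates to \cite[Proposition 2.15]{CJSwind} in a footnote, whereas you sketch a self-contained implicit-function-theorem argument recovering the coefficients of the fibrewise quadratic from the two smooth graph branches of $\Sigma$ over a vertical secant; this works as long as, as you note, the secant direction is chosen away from the two tangency points of each ellipsoid, and since the recovered data are canonical the local charts glue. Net effect: same decomposition, a slightly more computational transversality proof, and a proof in place of a citation for the smoothness of the Zermelo data.
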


\begin{proof} First of all,  the transversality of a hypersurface  $\Sigma$ constructed from Zermelo data $(g_R,W)$ as in (3) can be proved as follows.   Let $F_R$ be the $g_R$-norm, namely, $F_R(v)=\sqrt{g_R(v,v)}$ for $v\in TM$.  
Its indicatrix $\Sigma_R=F_R^{-1}(1)$ (i.e., the set of all its unit vectors) must be tranverse to all the tangent spaces. Otherwise, as $1$ is a regular value of $F_R$, there would be some $v\in TM \cap \Sigma_R$ such that $(\df F_R)_v(w_v)=0$ for some $w_v\in T_v (T_pM)\subset T(TM)$, where $v\in T_pM$ and $w_v$ is not 
tangent to the unit $g_R$-sphere on $T_pM$. Thus, all the vectors tangent to $T_pM$, when looked as elements of (the vertical space in) $T_v(TM)$ lie in the kernel of $\df (F_R)_v$. In particular, this happens to $v_v:= \df(v+\lambda v)/\df t|_0$, so, 
$$
0= (\df F_R)_v(v_v)= \left.\frac{\df F_R(v+\lambda v)}{\df\lambda}\right|_{\lambda=0}=\left. \frac{\df (1+\lambda)}{\df\lambda}\right|_{\lambda=0} F_R(v)=1,
$$  
a contradiction. Then,  notice that  the  pointwise translation $TM\rightarrow TM$, $u_p\mapsto u_p+W_p$  provided by the vector field $W$ does preserve the smooth fiber bundle struture of $TM$ (namely, its structure as an affine bundle, even though not as a linear bundle).  Therefore,  $\Sigma=\Sigma_R+ W$ must remain transverse, as required.

Now,  part (1) follows just  by applying pointwise the previous lemma and remark, and (2) by constructing the SSTK splitting with $g_0=g_R, \omega=-g(W,\cdot )$ and $\Lambda= 1-g_R(W,W)$. For (3) the necessary condition is now straightforward, and the sufficient one follows by taking  $W$ as the centroid of each ellipsoid and the hypersurface $\Sigma-W$ as the unit sphere bundle for\footnote{The smoothness of the Zermelo data $(g_R,W)$ follows from  the transversality of $\Sigma$, \cite[Proposition 2.15]{CJSwind}; see also  Section 2.2 of this reference (especially around Example 2.16) for further discussions on transversality.}  $g_R$. 
\end{proof}

\begin{defi}
A {\em wind Riemannian structure (WRS)} on a manifold is any hypersurface $\Sigma$ embedded in $TM$ which satisfies the equivalent conditions in Proposition~\ref{p2.8}~(3), namely, for some (univocally determined) Zermelo data $(g_R,W)$, one has $\Sigma= S_R+W$, where $S_R$ is the indicatrix (unit sphere bundle) of $g_R$.
\end{defi}

 Such a definition  admits natural extensions: $\Sigma\subset M$ is a {\em wind Finslerian structure} when $\Sigma= S+W$, where $S$ is the indicatrix for a Finsler metric $F$ (now, $(F,W)$ is determined univocally if one imposes additionally that $W$ provides the  centroid  of $S$ at each point);  
then $\Sigma_p$ is a {\em  wind Minkowski structure} at each $p\in M$. The properties of wind Finslerian structures and norms as well as the relation with classical {\em Zermelo navigation problem} are studied extensively in \cite{CJSwind} (see also \cite{JS}).

 Let us  recall the following particular cases  for  an SSTK splitting. 
\subsubsection{\bf Static case} $\omega\equiv 0$. Now, $W\equiv 0$,  $g_R=g_0/\Lambda$ (necessarily $\Lambda>0$) and the spacetime is conformal to the product $(\R\times M, -dt^2+g_R)$. It is well-known that the global  hyperbolicity of the spacetime is equivalent to the completeness of $g_R$ as well as to the fact that $M$ is a Cauchy  hypersurface, \cite[Th. 3.67, 3.69]{BEE}. Moreover, the spacetime is always causally continuous, and it will be causally simple if and only if $g_R$ is convex (see the next case).

\subsubsection{\bf Stationary case} \label{s222} $\Lambda>0$ ($K$ timelike). Now, $\Sigma_p$ is a $g_0$-sphere of center $W_p$ and radius $r_p> \| W_p\|_0$ (recall Lemma \ref{l}), that is, the $g_R$-norm of $W_p$  is  smaller than one. Therefore, the zero tangent vector is always included in the interior of each sphere $\Sigma_p$ and the hypersurface $\Sigma$ can be regarded as the indicatrix of a Finsler metric $F$ of Randers type; concretely,
\begin{equation} \label{e_Randers}
F(v)  =
\frac{\omega(v)}{\Lambda}+\sqrt{
\frac{g_0(v,v)}{\Lambda}+\left( 
\frac{\omega(v)}{\Lambda}\right)^2}.
\end{equation}
Then, the (future-directed) SSTK lightlike directions on $M$ are neatly described by the vectors $\partial_t+v$ such that $F(v)=1$.

Recall that, in general, a Finsler metric is {\em not reversible}, that is, $F$ behaves as a pointwise norm which is only positive
homogeneous ($F(\lambda v)=|\lambda| F(v)$ is ensured only for $\lambda\geq 0$). As a consequence, $F$ induces a  (possibly non-symmetric)  {\em
generalized distance} $d_F$ and one must distinguish between {\em forward} open balls $B^+_F(p,r)=\{q\in M: d_F(p,q) < 
r\}$ and {\em backward} ones $B^-_F(p,r)=\{q\in M: d_F(q,p) < r\}$. Moreover, even 
though geodesics   make the usual natural sense, the reverse parametrization of a geodesic may not be a geodesic. 

Standard stationary spacetimes were systematically studied in \cite{CJS} by using \eqref{e_Randers} (choosing a conformal representative so that $\Lambda\equiv 1$),  including its causal hierarchy. 
Indeed,  the existence of the temporal function $t$ implies that  standard stationary  spacetimes are stably causal. The other higher  steps in the standard ladder of causality 
are neatly characterized by the Randers metric $F$ as follows   (see \cite{CJS}): 
\begin{enumerate}
\item  $M=t^{-1}(0)$ is a Cauchy hypersurface if and only if $F$ is complete.
\item $\R\times M$ is globally hyperbolic if and only if the intersections of the closed balls $\bar B^+_F(p,r)\cap \bar B^-_F(p',r')$ are compact for all $p,p'\in M, r,r'>0$.  
\item $\R\times M$ is causally simple if and only if $F$ is convex,  in the sense that for each $(p,q)\in M\times M$ there exists a (non-necessarily unique) minimizing $F$-geodesic from $p$ to $q$.
\item    $\R\times M$ is always  causally continuous, \cite{JSstat}.
\end{enumerate}
It is also worth pointing out that the causal boundary of such a stationary spacetime can be described in terms of the natural (Cauchy, Gromov) boundaries of the Finslerian manifold $(M,F)$ (see \cite{FHS} for a thorough study).  On the other hand, a description of the conformal maps of the stationary spacetime in Finslerian terms can be found in \cite{JLP15} and some links between the flag curvature of the Randers metric and the conformal invariants of the spacetime are developed in \cite{GHWW}. 
\subsubsection{\bf Nonnegative lapse\label{s223}}  $\Lambda\geq 0$ ($K$ causal). When $\Lambda(p)=0$ then the $g_R$-norm of $W_p$ is equal to 1 and $\Sigma_p$ contains the zero vector. Then, $F$ becomes a {\em Kropina} metric; this is a singular type of Finslerian metric $F(v)=-g(v,v)/(2\omega(v))$ which applies only to $v\in T_pM$ such that $\omega(v)<0$. Indeed, one can rewrite \eqref{e_Randers} as:
\begin{equation}\label{randers-kropina}
 F(v) = \frac{g_0(v,v)}{-\omega(v)+\sqrt{\Lambda
 g_0(v,v)+\omega(v)^2}},\quad\quad 
 \end{equation}
 which makes sense even when $\Lambda$ vanishes and will be called a {\em Randers-Kropina metric}. Now, the SSTK lightlike directions on $M$ are again described by the vectors of the form $\partial_t+v$ with $F(v)=1$  
with the caution that, whenever $\Lambda=0$, the direction $\partial_t$ must be included (as $0\in \Sigma_p$)  and $F$ is applied only on vectors $v$ with $\omega(v)<0$. 

The Randers-Kropina metric  defines an {\em $F$-separation} $d_F$ formally analogous to the generalized distance of the 
stationary case. Its properties   are carefully studied in \cite[Section 4]{CJSwind}. Some important differences between the $F$-
separation $d_F$ and the generalized distance in the standard stationary case are: (i) $d_F(p,q)$ is 
infinite if there is no admissible curve $\alpha$  from $p$ to $q$ (where {\em admissible} means here satisfying $\omega(\alpha'(s))<0$ whenever $\Lambda(\alpha(s))=0$), and (ii) the continuity of $d_F$ is ensured only outside the diagonal; indeed, $d_F$ is discontinuous on $(p,p)$ whenever $d_F(p,p)>0$ (and in this case $\Lambda(p)=0$ necessarily), \cite[Th. 4.5, Prop. 4.6]{CJSwind}.  However, Randers-Kropina metrics admit geodesics analogous to the Finslerian ones. 

The  ladder of causal properties of the spacetime can be characterized in  
terms of the properties of $F$-separation, and it becomes  formally analogous to the conclusions (a)---(d) of the stationary case, \cite[Th. 4.9]{CJSwind}.
\subsubsection{\bf General SSTK case} ($K$ may be spacelike). Now, when $\Lambda(p)<0$ one has $g_R(W_p,W_p)>1$, that is, the zero vector  is not included in the solid ellipsoid enclosed by  $\Sigma_p$. The half lines starting at $0$ and tangent to the $g_R$-sphere  $\Sigma_p$, provide 
   a cone. Such a cone is tangent to $\Sigma_p$ in an $(n-1)-$sphere $S^{n-1}_p$, and $\Sigma_p\setminus S^{n-1}_p$ has two connected pieces. One of them is convex (when 
   looked  inside the cone from infinity), and can be described as the open set containing the vectors $v$  inside the cone such that $F(v)$ 
   (computed by using the expression~\eqref{randers-kropina}) is equal to $1$, see Figure \ref{figurecones2}. 
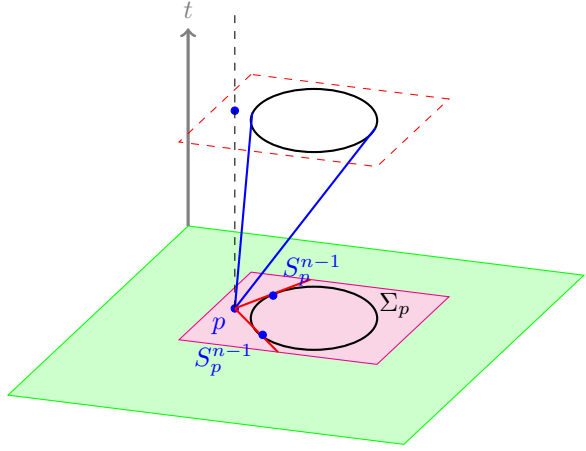
\begin{figure}
\tdplotsetmaincoords{70}{110}
\begin{tikzpicture}[scale=2.8,tdplot_main_coords]
    \filldraw[
        draw=green,%
        fill=green!20,%
    ]          (0.5,1.5,0)
            -- (3,1.5,0)
            -- (3,3.5,0)
            -- (0.5,3.5,0)
            -- cycle;
       \filldraw[
        draw=magenta,%
        fill=magenta!20,%
    ]          (1,2,0)
            -- (2,2,0)
            -- (2,3,0)
            -- (1,3,0)
            -- cycle;
           \draw[thick] (1.5,2.5,0) circle (0.3cm and 0.15cm);
           \node at (1.2,2.8,0) (s) {$\Sigma_p$};
           \filldraw[blue] (1.5,2.1,0) circle (0.5pt)  node[anchor=north east]{$p$};
           \draw[thick] (1.5,2.5,1) circle (0.3cm and 0.15cm);
           \filldraw[blue] (1.5,2.1,1) circle (0.5pt);
           \draw[thick,blue] (1.5,2.1,0) -- ((1.5,2.1+0.71,1);
           \draw[thick,blue] (1.5,2.1,0)  -- (1.5,2.1+0.088,1);
              \draw[dashed] (1.5,2.1,0) -- (1.5,2.1,1.5);
              \draw[thick,red] (1.5,2.1,0) -- (1.5+0.5,2.1+0.4,0);
              \filldraw[blue] (1.5+0.3,2.1+0.25,0) circle (0.5pt) node[anchor=north east]{$S^{n-1}_p$};
           \draw[thick,red] (1.5,2.1,0) -- (1.5-0.5,2.1+0.2,0);
           \filldraw[blue] (1.5-0.23,2.1+0.11,0) circle (0.5pt) node[anchor=south west]{$S^{n-1}_p$};
    \draw[very thick,->,gray] (0.5,1.5,0) -- (0.5,1.5,1) node[left,anchor=south]{$t$};
     \draw[
        draw=red,dashed%
    ]           (1,2,1)
            -- (2,2,1)
            -- (2,3,1)
            -- (1,3,1)
            -- cycle;
            -- cycle;
\end{tikzpicture}
\caption{\label{figurecones2} The diagram shows the $g_R$-sphere $\Sigma_p$ determined by the projections of lightlike vectors with the time coordinate equal to $1$. When $\Lambda(p)<0$, the half lines starting at $0$ and intersecting $\Sigma_p$ form a conic region. The intersection of the boundary of this conic region with $\Sigma_p$ is an $(n -1)$-dimensional $g_R$-sphere $S^{n-1}_p$  (in the figure, only two points) 
which divides $\Sigma_p$ in two connected components.}
\end{figure}   
   The other connected part is computed analogously by putting $F_l(v)=1$ where, now,
\begin{equation}\label{e_lorentz_Finsler}
 F_l(v) = -\frac{g_0(v,v)}{\omega(v)+\sqrt{\Lambda
 g_0(v,v)+\omega(v)^2}} \; (=-F(-v)).
 \end{equation}
The part $F(v)\equiv 1$ in the regions $\Lambda<0$ and $\Lambda\geq 0$ matches naturally. So, $F$ behaves as a {\em conic} Finsler metric on all $M$; indeed the  conic region where $F$ is defined is properly the interior of a cone when $\Lambda(p)<0$, an open half plane when $\Lambda(p)=0$ and all $T_pM$ otherwise (see \cite{JSpisa} for a systematic study of conic Finsler metrics). However, the concaveness of $F_l(v)\equiv 1$ makes $F_l$ to behave as a Lorentz-Finsler metric, in the sense that it  yields a reverse triangle inequality similar to the Lorentzian one, that is, $F_l(v+w)\geq F_l(v)+F_l(w)$ for all $v,w\in T_pM$ in the conic domain of definition. 

So, the SSTK lightlike directions on  the hypersurface  $M$ are described by the vectors of the form $\partial_t+v$ taking into account that, when $\Lambda(p)<0$, one has to choose vectors with either $F(v)=1$ or $F_l(v)=1$, including those in $S_p^{n-1}$ (which corresponds with the limit case $F(v)=F_l(v)=1$).
Again, the causal properties of the SSTK spacetime can be described by using the Finslerian elements $F, F_l$ or, directly, by means of the hypersurface $\Sigma$. However, 
this general  case  is much subtler than the previous ones, and   it will be   sketched in Section \ref{s_3}.    

\subsection{An application: Finslerian consequences of Harris' stationary quotients} \label{s_2.3}
Even though we will focus on properties of general SSTK spacetimes linked to wind Riemannian structures,  we emphasize now the links between the  conformal geometry of standard stationary spacetimes and the geometry of Randers spaces, with applications also to arbitrary Finsler manifolds. Apart from the applications explained in the point (2) of subsection \ref{s_2.2}, several links  introduced in \cite{CJS}  include the behavior of completeness under projective changes (see below) and properties on the differentiability of the distance function to a closed subset (see also \cite{TaSa}) as well as on the Hausdorff measure of the set of cut points. 
Now, we can add a new application by translating  a recent result by Harris on group actions \cite{Har} on static/stationary spacetimes to the Finslerian setting, namely: 
\begin{quote}
{\em There exists a Randers manifold $(M,R)$ such that  not all  its closed symmetrized balls are  compact but its universal covering $(\tilde M,\tilde R)$ satisfies that  all  its closed symmetrized balls are compact.}
\end{quote}
In order to understand the subtleties of this result, recall the following. 
\begin{rem}
(1) Of course, such a property cannot hold in the Riemannian case $(M,g_R)$, because the closed $g_R$-balls are compact if and only $g_R$ is complete, and this property holds if and only if the universal covering $(\tilde M,\tilde g_R)$ is complete.

(2) The key in the Randers case is that the compactness of the closed symmetrized $R$-balls (which is equivalent to the compactness of the intersections between closed forward and backward $R$-balls) does not imply $R$-geodesic completeness (nor the completeness of $R$ in any of the equivalent senses of the Finslerian Hopf-Rinow result).  
Indeed, as shown in \cite{CJS} for Randers metrics (and then extended to the general Finslerian case in \cite{Matveev}), the compactness of the closed symmetrized $R$-balls is equivalent to the existence of a complete Randers metric $R^f$ which is related to $R$ by means of a {\em trivial projective transformation} (i.e., $R^f=R+df$ for some function $f$ on $M$ such that $R+df>0$ on $TM\setminus\mathbf{0}$).  
As will be apparent  below,  the existence of such a function  $\tilde f$  in the universal covering $\tilde M$ does not imply  the existence of  an analogous function $f$  in the manifold $M$,  as $\tilde f$   is not necessarily projectable. 
\end{rem}
Next, let us derive the Randers result from Harris' ones.
\begin{exe} Let start with 
\cite[Example 3.4(b)]{Har}, which exhibits a globally hyperbolic 
standard static spacetime $L'$ admitting a group of isometries 
$G$ such that the quotient $L=L'/G$ is static-complete (i.e. 
it admits a complete static vector field) and causally 
continuous, but not globally hyperbolic.  A static-complete 
causally continuous spacetime is not necessarily a standard 
static spacetime; nevertheless, as any distinguishing stationary-complete space is standard stationary \cite{JSstat},  so is $L
$ too. Then, we can write the standard stationary splitting $L=\R
\times M$ with associated Randers metric $F$, as explained in subsection \ref{s_2.2}. As $L$ is not 
globally hyperbolic, $(M,F)$ cannot satisfy the property of 
compactness of closed symmetrized balls. However, its 
universal Lorentzian covering $\tilde L=\R\times \tilde M$ 
inherits a Randers metric $\tilde F$ which must satisfy such a property of compactness (indeed,  $\tilde L$ must also be the universal covering of $L'$ and, so, globally hyperbolic). We emphasize that, being $\tilde L$ static-complete and simply connected, it can be written as a standard static spacetime,  \cite[Theorem 2.1(1)]{San};  however,  the splitting $\tilde L=\R\times \tilde M$ we are  using is  only  standard stationary but not standard static (otherwise, $\tilde F$ would be Riemannian).
\end {exe}

\section{A new characterization of WRS completeness}\label{s_3}

\subsection{WRS geodesics and completeness vs SSTK Cauchy hypersurfaces} \label{s_3.1}
In what follows, let $\Sigma$ be a WRS on $M$ with associated Zermelo data $(g_R,W)$ and SSTK splitting $(\R\times M, g)$, being $K=\partial_t$  Killing, $\omega =-g(K, \cdot )$ and the conformal normalization $
\Lambda + | \omega |^2_0\equiv 1$ chosen, so that $g_0=g_R$, according  to  \eqref{e_norm} and \eqref{e_gr}. 

\subsubsection{First definitions} The region where 
$K$ is, resp., timelike ($\Lambda>0$), lightlike ($\Lambda=0$) or spacelike ($\Lambda<0$) will be called of   
{\em mild wind} (as  $\lVert W \rVert _R<1$), {\em critical wind} and 
{\em strong wind}; the latter will be denoted $M_l$ as it 
contains  the  Lorentz-Finsler metric  $F_l$ (besides  the conic Finsler one $F$, defined on all $M$). Moreover, we define the (possibly signature-changing) metric
\begin{equation}
\label{e_h}
h= \Lambda g_0 + \omega\otimes \omega,
\end{equation}
which is  Riemannian (resp. degenerate, of signature $
(+,          -,\dots , -)$) in the mild (resp. critical, 
strong) wind region. Remarkably, the conformal metric 
$h/\Lambda$ in the region $\Lambda\neq 0$ is the induced 
metric on the orbit space of $\R\times M$ obtained by 
taking the quotient by the  flow of $K$ (see 
\cite[Prop. 3.18]{CJS}); clearly, $h/\Lambda$ is not 
conformally invariant (however, $h/\Lambda^2$ is). In 
any case $-h$ (as well as $h/\Lambda$ and $-h/\Lambda^2$) 
becomes a Lorentzian metric on $M_l$, and it is 
naturally time-oriented because the future timelike 
vectors of $\R\times M$ project onto a single timecone 
of $-h$. 

Let 
us define the following subsets of $TM$. First,  put 
$$
A\cup A_E:=\{v\in TM: v \; \hbox{is the projection of a lightlike vector of the SSTK splitting}\}.
$$
 Inside this set, we will consider two non-disjoint subsets $A$ and $A_E$. The set $A$ will contain the interior of all the conic domains where the conic Finsler metric $F$ in \eqref{randers-kropina} is naturally defined; then, $A_l:=TM_l \cap A$ will  also be the natural open domain for the Lorentz-Finsler metric $F_l$ in \eqref{e_lorentz_Finsler}.
The set  $A_E$ will contain  all the continuously extended domain of  $F_l$ in $A_l$, plus the zeroes of the critical region; then, $A\cup A_E$ can  also be regarded as the extended domain for $F$ plus the zeroes of the critical region.  Explicitly, $A_p= A\cap T_pM$ and  $(A_E)_p= A_E \cap T_pM$ are, at each $p\in M$: 
$$\begin{array}{rl}
v_p\in A_p \Longleftrightarrow &
\left\{ 
\begin{array}{ll}
v_p\neq 0 & \hbox{when} \; \Lambda(p)>0 \\
-\omega_p(v_p) (=g_R(W_p,v_p)) > 0 & \hbox{when}\; \Lambda(p)=0\\
-\omega_p(v_p)>0 \;\; \hbox{and} 
\;\; h(v_p,v_p)>0 & \hbox{when} \; \Lambda(p)<0
\end{array}
\right. \\
v_p\in (A_E)_p \Longleftrightarrow & 
\left\{ 
\begin{array}{ll}
  0_p 
  &  \hbox{when}\; \Lambda(p)=0\\
-\omega_p(v_p)>0 \;\;  \hbox{and} 
\; \; h(v_p,v_p)\geq 0 & \hbox{when} \; \Lambda(p)<0
\end{array}
\right. 
\end{array}
$$
\begin{rem}\label{r_dominios}
 $F$ and $F_l$ can be extended continuously from their  open domains $A$, $A_l$ 
 to $A\cup (A_E\cap TM_l)$ and  $A_E\cap TM_l$, respectively.
Indeed, the metric  $h$ (see \eqref{e_h}) 
vanishes in  $(A_E\cap TM_l)\setminus A_l$,  making equal 
the expressions  \eqref{randers-kropina}, 
\eqref{e_lorentz_Finsler} for $F$ and $F_l$ there.  However, the introduced notation will take also  into account the following subtlety which occurs for the zeroes of the critical region. 

We will work 
typically with lightlike curves
 $\tilde \alpha(t)=(t,\alpha(t))$ in the 
spacetime parameterized with the $t$-coordinate and then, necessarily,  either 
$F(\alpha'(t))\equiv 1$  or $F_l(\alpha'(t))\equiv 1$   whenever $\alpha'$ does not vanish. 
However, when $\tilde \alpha$ is parallel to $K$ at some point $
\tilde \alpha(t_0)=(t_0,p)$, $p\in M$, then $p$ belongs to the critical 
region and $\tilde \alpha'$ projects onto the zero vector $
\alpha'(t_0)=0_p$, which belongs to the indicatrix $\Sigma_p$. 
Clearly, $F_p$  and $(F_l)_p$  cannot be extended continuously to $0_p$; 
however, whenever   the $F$-length (resp. $F_l$-length) of lightlike curves as $\tilde \alpha$ above is 
considered, we will put $F(0_p)=1$ (resp. $F_l(0_p)=1$) as this is the continuous 
extension of the function  $t\rightarrow F(\alpha'(t))$ (resp. $t\rightarrow F_l(\alpha'(t))$). 

 Summing up, we adopt the following convention: 
the {\em extended domain} of $F$ and $F_l$  is all $A\cup A_E
$, with $F(0_p)=F_l(0_p)=1$ whenever $\Lambda(p)=0$ (even if $F$ and $F_l$ are   not continuous there)  and with $F_l(v_p)=\infty$ on the  mild and critical regions when  $v_p\not=0$. 
\end{rem}
\begin{rem}\label{Riembounded}  Both pseudo-Finsler metrics $F$ and $F_l$ are Riemannianly lower bounded (according to \cite[Definition 3.10]{JSpisa}), that is, there exists a Riemannian metric $h_R$ in $M$ such that $\sqrt{h_R(v,v)}\leq F(v),F_l(v)$ for every $v\in A_E\cup A$. 
This is observed in \cite[below Definition 2.24]{CJSwind} and it is  easy to check directly, because it is sufficient to prove that the property holds locally (see \cite[Remark 3.11 (1)]{JSpisa}). 
\end{rem}
\subsubsection{Wind curves and balls}
A (piecewise smooth) curve $\tilde \alpha(t)=(t,\alpha(t))$ is causal (necessarily future-directed) in the SSTK splitting if and only if the velocity $\alpha'(t)$ is  {\em Zermelo-bounded},  in the sense that $\alpha'(t)$ always 
belongs to the closed $g_R$-ball of $ T_{\alpha(t)}M$ with center $ W_{\alpha(t)}$ and 
radius 1. Even though the curve $\tilde \alpha$ 
is always regular, the velocity of $\alpha$ may vanish either 
in the region of mild wind or in the region of critical wind. 
Nevertheless,  in Riemannian Geometry,  it is natural to work with regular curves in order to  reparametrize  all the curves with the arc-parameter. 
 In our approach, one can avoid the use of non-regular curves $\alpha$ in the region of mild wind but, for 
critical wind, the appearance of curves  with vanishing 
velocity becomes unavoidable; indeed, as emphasized in the 
definition of $A\cup A_E$, the zero vector $0_p$ is the 
projection of a lightlike vector if (and only if) $p$ lies in the critical region.  
 Accordingly, we will say that a (piecewise smooth) curve $\alpha$ is 
  {\em $\Sigma$-admissible} if its velocity lies in $A\cup A_E$ and  a {\em wind curve} if, additionally, it is Zermelo bounded.  
 Even though we will work  with  piecewise smooth curves as usual,  the order of differentiability can be lowered in a natural way. This happens  when considering causal continuous curves \cite[p. 54]{BEE}; in this case, 
 the natural assumption would  be  to  consider locally $H^1$-curves (for causal curves and, then, for wind ones), because, under this regularity, being future-directed continuous causal becomes  equivalent to the existence of an almost everywhere future-directed causal velocity \cite[Th. 5.7]{CaFlSa}).

The previous definitions yield directly the following characterization of wind curves. 
  
\begin{prop}\label{p_uff}
A  (piecewise smooth)  curve $\alpha: I\subset \R\rightarrow M$,  with $I$ an interval,  
is  a wind curve if and only if its {\em graph} $\tilde \alpha$  in  the associated SSTK splitting  defined as
$$
\tilde\alpha(t)=(t, \alpha(t)), \qquad \quad \forall t\in I
$$
is a  causal  curve. In this case,  
\begin{equation}
\label{e_uff}
\ell_F(\alpha|_{[a,b]})\leq b-a \leq \ell_{F_l}(\alpha|_{[a,b]}).
\end{equation} 
for each $a, b\in I$, with $a<b$.
\end{prop}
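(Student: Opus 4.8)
The plan is to unwind all the definitions and reduce the statement to the pointwise characterization of lightlike cones encoded in the WRS $\Sigma$. First I would recall that, by Lemma~\ref{l} and Proposition~\ref{p2.8}(1)--(2), with the chosen conformal normalization $\Lambda+|\omega|_0^2\equiv 1$ one has $g_0=g_R$, and a tangent vector $\partial_t|_p+v_p$ is \emph{causal} (future-directed) for $g$ if and only if $g(\partial_t+v_p,\partial_t+v_p)\le 0$, i.e.
$$
-\Lambda(p)+2\omega_p(v_p)+g_R(v_p,v_p)\le 0.
$$
Completing the square exactly as in the proof of Lemma~\ref{l} (writing $w_p=v_p-W_p$ with $g_R(W_p,\cdot)=-\omega_p$), this is equivalent to $g_R(w_p,w_p)\le \Lambda(p)+|W_p|_R^2=1$, that is, $v_p$ lies in the closed $g_R$-ball of center $W_p$ and radius $1$ --- precisely the Zermelo-bounded condition. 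Since $\tilde\alpha'(t)=\partial_t|_{\alpha(t)}+\alpha'(t)$ in the natural identification, $\tilde\alpha$ is a (piecewise smooth) causal curve iff $\alpha'(t)$ is Zermelo-bounded for all $t$; and the velocity lying in $A\cup A_E$ is automatic here, because $A\cup A_E$ was \emph{defined} as the set of projections of lightlike vectors, which contains all projections of causal vectors together with the relevant zero vectors of the critical region (the one subtlety being $0_p$ when $\Lambda(p)=0$, which indeed is the projection of the lightlike vector $\partial_t|_p$). This gives the asserted equivalence. I would remark that future-directedness is automatic since $\mathrm{d}t(\tilde\alpha')\equiv 1>0$, and that the degenerate/critical cases are handled uniformly by the $A\cup A_E$ bookkeeping set up in Remark~\ref{r_dominios}.

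For the inequality \eqref{e_uff}, the idea is to compare, pointwise along $\alpha$, the ``time elapsed'' $1$ (the $t$-component of $\tilde\alpha'$) with $F(\alpha'(t))$ and $F_l(\alpha'(t))$. On the mild region, $\Sigma_p$ is the indicatrix of the Randers metric $F$ in \eqref{e_Randers}, so the closed Zermelo ball is exactly $\{v:F(v)\le 1\}$; hence Zermelo-boundedness gives $F(\alpha'(t))\le 1$, while $F_l(\alpha'(t))=\infty$ unless $\alpha'(t)=0$ (in which case $F(0_p)$ contributes $0$ to the $F$-length and the $F_l$-side is $\ge 1$ trivially by the convention, or rather the interval contributes $0$ on both sides). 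On the strong-wind region $M_l$, the boundary sphere $\Sigma_p$ splits into the two pieces $F(v)=1$ and $F_l(v)=1$, and the closed Zermelo ball (intersected with the conic domain $A_p$) is exactly $\{v\in A_p: F(v)\le 1\le F_l(v)\}$ --- this is the content of subsection~\ref{s223}, see Figure~\ref{figurecones2}. So along $\alpha$ one has $F(\alpha'(t))\le 1\le F_l(\alpha'(t))$ pointwise (with the conventions of Remark~\ref{r_dominios} covering the zeroes of the critical region and the matching $h=0$ locus where $F=F_l$). Integrating over $[a,b]$ and using additivity of length over the (finitely many) smooth pieces yields
$$
\ell_F(\alpha|_{[a,b]})=\int_a^b F(\alpha'(t))\,\mathrm{d}t\le b-a\le \int_a^b F_l(\alpha'(t))\,\mathrm{d}t=\ell_{F_l}(\alpha|_{[a,b]}).
$$

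The step I expect to require the most care is the bookkeeping at the interfaces: ensuring that the pointwise inequality $F(\alpha'(t))\le 1\le F_l(\alpha'(t))$ holds \emph{uniformly} across the transitions between mild, critical and strong regions, where $F$ and $F_l$ change nature (Randers vs.\ Kropina vs.\ conic vs.\ Lorentz--Finsler) and where the velocity $\alpha'$ may vanish. The key observations making this routine rather than delicate are: (i) the extended-domain conventions fixed in Remark~\ref{r_dominios} (notably $F(0_p)=F_l(0_p)=1$ when $\Lambda(p)=0$) are precisely the continuous extensions along admissible curves, so no measure-zero issue arises for the integrals; and (ii) $F$ and $F_l$ are Riemannianly lower bounded (Remark~\ref{Riembounded}), so the lengths $\ell_F,\ell_{F_l}$ are well-defined (possibly $+\infty$ for $\ell_{F_l}$ if $\alpha$ meets the mild or critical region on a positive-measure set with $\alpha'\ne 0$, in which case the right inequality is trivial). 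Everything else is a direct translation of the $g$-causality condition into the Zermelo ball via Lemma~\ref{l}, exactly as in \cite{CJSwind}.
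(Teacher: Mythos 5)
Your proof is correct and is essentially the argument the paper has in mind: it states this proposition without proof, as following ``directly'' from the definitions, and your elaboration --- completing the square as in Lemma \ref{l} to identify $g(\partial_t+v,\partial_t+v)\leq 0$ with the closed Zermelo ball $g_R(v-W_p,v-W_p)\leq 1$, and then reading that ball radially as $\{v: F(v)\leq 1\leq F_l(v)\}$ (with the conventions of Remark \ref{r_dominios}) to get the pointwise inequality $F(\alpha')\leq 1\leq F_l(\alpha')$ and hence \eqref{e_uff} by integration --- is exactly the intended route. The one imprecision is your claim that Zermelo-boundedness automatically places the velocity in $A\cup A_E$: this fails for $0_p$ at points of mild wind, where $0_p\notin A_p\cup (A_E)_p$ even though $\partial_t|_p$ is timelike, so the ``only if'' direction holds only under the paper's own convention that wind curves with vanishing velocity in the mild region are excluded (or reparametrized away, cf.\ the footnote attached to the definition of $d_{\bar F}$); this is a defect of the statement as much as of your proof, and does not affect the inequality \eqref{e_uff}.
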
  

Now, for each $p,q\in M$, let   
  $$C^{\Sigma}_{p, q}= \{\hbox{wind  curves starting at} \; p \; \hbox{and ending at} \; q\}.$$
The {\em forward} and {\em backward} {\em wind balls}  of center $p_0\in M$ and radius $r>0$ 
 associated with the WRS $\Sigma$ are, resp:
\begin{align*}
&B^+_{\Sigma}(p_0,r)=\{x\in M:\ \exists\   \gamma\in C^{\Sigma}_{p_0, x}, \text{ s.t. }   r=b_\gamma-a_\gamma \, \text{and} \;  \ell_F(\gamma)<r<\ell_{F_l}(\gamma)\},\\
&B^-_{\Sigma}(p_0,r)=\{x\in M:\ \exists\   \gamma\in C^{\Sigma}_{x, p_0}, \text{ s.t. }  r=b_\gamma-a_\gamma \, \text{and} \;  \ell_F(\gamma)<r<\ell_{F_l}(\gamma)\},\\
\intertext{where $a_\gamma$ and $b_\gamma$ are the endpoints of the interval of definition of each $\gamma$. These balls are open \cite[Remark 5.2]{CJSwind} and their closures are called {\em (forward, backward) closed wind balls}, denoted  $\bar{B}^+_{\Sigma}(p_0,r), \bar{B}^-_{\Sigma}(p_0,r)$.  Between these two types of  balls,  the {\em forward} and {\em backward  c-balls} are defined, resp., by:} 
&\hat{B}^+_{\Sigma}(p_0,r)=\{x\in M:\ \exists\  \gamma\in C^{\Sigma}_{p_0, x},\text{ s.t. } r=b_\gamma-a_\gamma \, \text{(so,} \;  \ell_F(\gamma)\leq r\leq\ell_{F_l}(\gamma) )  \}
,\\
&\hat{B}^-_{\Sigma}(p_0,r)=\{x\in M:\ \exists\ \gamma\in C^{\Sigma}_{x, p_0},\text{ s.t. } r=b_\gamma-a_\gamma \, \text{(so,} \; \ell_F(\gamma)\leq r\leq\ell_{F_l}(\gamma))  \} 
 \end{align*}
for  $r> 0$; for $r=0$, by convention $\hat{B}^\pm_{\Sigma}(p_0,0)=p_0$ (so that, 
consistently with our conventions, if $0_{p_0}\in \Sigma_{p_0}$, then $p_0 \in \hat B^\pm_\Sigma(p_0,r)$ for all $r\geq 0$).  When $\Sigma$ is the indicatrix of  a Randers metric,  then  $B^\pm_{\Sigma}(p_0,r)$
coincides with the usual (forward or backward) open balls. Such balls have a neat interpretation \cite[Prop. 5.1]{CJSwind}:
\begin{prop}\label{bolas2} For the associated SSTK  splitting:
 \begin{align*}
&I^+(0,x_0)= \cup_{s> 0}\{0+s\}\times B_{\Sigma}^+(x_0,s),\\
&I^-(0,x_0)= \cup_{s> 0}\{0-s\}\times B_{\Sigma}^-(x_0,s),\\
&J^+(0,x_0)= \cup_{s\geq  0}\{0+s\}\times \hat{B}_{\Sigma}^+(x_0,s),\\
&J^-(0,x_0)= \cup_{s\geq  0}\{0-s\}\times \hat{B}_{\Sigma}^-(x_0,s).
\end{align*}
\end{prop}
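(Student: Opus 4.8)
The plan is to reduce the four identities to two, and then to unwind the definitions through the time function, isolating a single genuinely analytic point. Since $K=\partial_t$ is Killing, its flow is by isometries and carries $t^{-1}(0)$ onto $t^{-1}(s)$, so it suffices to describe $I^\pm,J^\pm$ of the single point $(0,x_0)$. Reversing the time orientation turns the SSTK splitting into another one, now with $K\mapsto-K$ (hence $W\mapsto-W$), interchanging $I^+\leftrightarrow I^-$, $J^+\leftrightarrow J^-$ and forward with backward wind balls; so it is enough to prove the two identities involving $I^+(0,x_0)$ and $J^+(0,x_0)$.

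The key observation is a reparametrization dictionary. For $s>0$, a point $(s,q)$ lies in $J^+(0,x_0)$ (resp. $I^+(0,x_0)$) iff there is a future-directed causal (resp. timelike) curve from $(0,x_0)$ to $(s,q)$; as $t$ is a temporal function it increases strictly along any causal curve, so such a curve reparametrizes by its $t$-coordinate into a graph $\tilde\alpha(u)=(u,\alpha(u))$, $u\in[0,s]$, with $\alpha(0)=x_0$, $\alpha(s)=q$. By Proposition~\ref{p_uff}, $\tilde\alpha$ is causal precisely when $\alpha$ is a wind curve, and then $\ell_F(\alpha)\le s\le\ell_{F_l}(\alpha)$. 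This immediately gives the $J^+$ identity: for $s>0$, $(s,q)\in J^+(0,x_0)$ iff there is a wind curve in $C^\Sigma_{x_0,q}$ defined on an interval of length $s$, i.e. $q\in\hat B^+_\Sigma(x_0,s)$; for $s=0$, strict monotonicity of $t$ forces $q=x_0$, unless the $K$-integral curve through $(0,x_0)$ is lightlike (i.e. $x_0$ lies in the critical region), and both possibilities are exactly what the conventions for $\hat B^+_\Sigma(x_0,0)$ and for critical $x_0$ encode; the range $s<0$ is empty on both sides.

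For $I^+$ the inclusion $\subseteq$ is the easy half. Given a future timelike curve from $(0,x_0)$ to $(s,q)$, first perturb it with fixed endpoints so that its velocity never vanishes (possible since timelikeness is an open condition), and reparametrize by $t$ to obtain a wind curve $\alpha$ on $[0,s]$. By Lemma~\ref{l} and Proposition~\ref{p2.8}, $\partial_t+v$ is timelike exactly when $v$ lies in the open $g_R$-ball of centre $W_p$ and radius $1$, and a direct pointwise check identifies this open ball, within the conic domain, with $\{F<1\}\cap\{F_l>1\}$ (recalling from Remark~\ref{r_dominios} that $F_l\equiv+\infty$ off the strong-wind region once $\alpha'\ne0$). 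Hence $F(\alpha'(u))<1<F_l(\alpha'(u))$ for all $u$ (one-sided at the finitely many breakpoints); by compactness of $[0,s]$ and piecewise continuity, $\sup_u F(\alpha')<1$ and $\inf_u F_l(\alpha')>1$, so $\ell_F(\alpha)<s<\ell_{F_l}(\alpha)$, i.e. $q\in B^+_\Sigma(x_0,s)$.

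The reverse inclusion $\supseteq$ is the crux, and the step I expect to be the main obstacle. We are handed a wind curve $\gamma\in C^\Sigma_{x_0,q}$ on $[0,s]$ with $\ell_F(\gamma)<s<\ell_{F_l}(\gamma)$, and must manufacture a future timelike curve from $(0,x_0)$ to $(s,q)$. Its graph $\tilde\gamma$ is causal but generically lightlike almost everywhere, so it cannot merely be reparametrized: one must use the two strict slacks $s-\ell_F(\gamma)>0$ and $\ell_{F_l}(\gamma)-s>0$ to open it up. On the subset where $F(\gamma'(u))<1$ and $F_l(\gamma'(u))>1$ the graph is already timelike; on the part where $F(\gamma'(u))=1$ but the direction is not one of the \emph{extreme} lightlike ones, a local reparametrization spending a little $F$-slack on an extra $\partial_t$-component pushes the graph into the timelike cone; and the genuinely rigid pieces --- lightlike segments along the tangency sphere $S^{n-1}_p$ in the strong-wind region, or along the $K$-direction $0_p\in\Sigma_p$ in the critical region --- must be bypassed by small spatial detours whose timelikeness is controlled by the surrounding slack, using the Riemannian lower bound of Remark~\ref{Riembounded} for uniform estimates. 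A convenient way to organize this is to cut $\gamma$ at the last instant where the accumulated $F$-slack $u\mapsto u-\ell_F(\gamma|_{[0,u]})$ still vanishes, observe that the initial arc has lightlike graph and so lands in $J^+(0,x_0)$, and then reach $(s,q)$ from there along a timelike deformation of the remaining arc, invoking the elementary implication $p\le r\ll q\Rightarrow p\ll q$. The delicate point is that after such a cut the two \emph{one-sided} strict length inequalities must both still be spendable pointwise on the tail --- precisely near the extreme null directions, where neither adding nor subtracting a $\partial_t$-component helps --- so that the required small spatial perturbations remain wind curves with $\ell_F<s<\ell_{F_l}$; this bookkeeping is carried out in detail in \cite[Prop. 5.1]{CJSwind}.
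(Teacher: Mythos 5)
The paper does not actually prove Proposition \ref{bolas2}: it is imported verbatim from \cite[Prop.~5.1]{CJSwind}, so there is no in-paper argument to compare yours against step by step. Judged on its own terms, your reduction to the future-directed case, the reparametrization dictionary through the temporal function together with Proposition \ref{p_uff}, the two $J^{\pm}$ identities, and the inclusion $I^+(0,x_0)\subseteq\cup_{s>0}\{s\}\times B^+_{\Sigma}(x_0,s)$ (via the pointwise identification of the timelike vectors $\partial_t+v$ with $\{F(v)<1\}\cap\{F_l(v)>1\}$ and integration of the strict inequalities) are correct and essentially complete; the perturbation to nonvanishing projected velocity in the mild region is a sensible way to stay inside the class of wind curves as defined.

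The gap is exactly where you locate it, and it is not closed. For $\supseteq$ you must pass from the two \emph{integrated} inequalities $\ell_F(\gamma)<s<\ell_{F_l}(\gamma)$ to a genuinely timelike connecting curve, even though $\gamma'$ may pointwise lie on the indicatrix, on the extreme sphere $S^{n-1}_p$, or be $0_p$ in the critical region, where the graph is rigidly lightlike and no redistribution of $\partial_t$-component helps. Your outline (localize the slack, detour around the rigid pieces, cut at the last zero of $u\mapsto u-\ell_F(\gamma|_{[0,u]})$ and invoke $p\le r\ll q\Rightarrow p\ll q$) has the right shape, but none of the estimates making the spatial detours simultaneously timelike, Zermelo-bounded, and compatible with both one-sided slacks are actually produced; and your concluding appeal to \cite[Prop.~5.1]{CJSwind} for ``the bookkeeping'' is circular, since that proposition \emph{is} the statement being proved. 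A self-contained argument would need the auxiliary machinery surrounding that proposition in \cite{CJSwind} (openness of the wind balls, the limit-curve and perturbation lemmas), not the proposition itself. In fairness, the paper under review makes the same omission by stating the result as a citation.
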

It is worth emphasizing that the c-balls make a proper natural sense even for a Riemannian metric (see \cite[Example 2.28]{CJSwind}). Indeed, the property of closedness for all the forward and backward c-balls, called {\em w-convexity}, extend naturally the notion of convexity for Riemannian and Finslerian manifolds, and becomes equivalent to the closedness of all $J^\pm(t_0,x_0)$ above and, thus, to the causal simplicity of the SSTK spacetime, \cite[Theorems 5.9, 4.9]{CJSwind}.

\subsubsection{Geodesics}\label{s_313} Starting at the notions of balls on the WRS, geodesics can be defined as follows.  A   wind  curve $\gamma: I=[a,b]\to M$,  $a<b$,   is called a
{\em unit extremizing geodesic} if
\begin{equation}\label{eunitgeodesic}
 \gamma(b)\in \hat{B}_\Sigma^+(\gamma(a),b-a)\setminus B_\Sigma^+(\gamma(a),b-a) .  \end{equation}
Then, a curve is an {\em extremizing geodesic}  if it is an affine    
reparametrization  of a unit extremizing geodesic, and it is a {\em geodesic} if it is locally an {\em extremizing geodesic}.

However,  geodesics for a WRS can be looked simply as the  projections on $M$ of the future-directed lightlike pregeodesics of the associated SSTK spacetime $(\R\times M,g)$ parametrized proportionally to the $t$ coordinate \cite[Th. 5.5]{CJSwind}. As $K$ is Killing, any spacetime  lightlike geodesic $\rho$  has the relevant invariant $C_{\rho} =g(\rho'(t),K)$. Reparametrizing with the $t$ coordinate, we have a 
lightlike pregeodesic $ \tilde\gamma(t)  =(t,\gamma(t))$ and its projection   is a unit $\Sigma$-geodesic  which belongs  to one of the following cases: 

\begin{enumerate}
\item[(1)] $C_\rho<0$: $\gamma$ is a geodesic of the conic Finsler metric $F$  on $M$, and $\gamma'(t)$ lies always in $A$.

\item[(2)] $C_\rho>0$: $\gamma$ is a geodesic of the Lorentz-Finsler metric $F_l$ on $M_l$, and  $\gamma'(t)$ lies always in $A_l:= TM_l\cap A$.

\item[(3)] $C_\rho=0$. We have two subcases: 

(3a) $\gamma$ is an {\em exceptional geodesic},  constantly equal to some point $p_0$ 
with $\Lambda (p_0)=0$ and $d\Lambda_{p_0}$ vanishing on the kernel of $\omega_p$, and 

(3b)  $\gamma$ is a  {\em boundary geodesic},  included in the closure of $M_l$ and it satisfies: 
(i) whenever $\gamma$ remains in $M_l$,  it is 
a lightlike  pregeodesic of the Lorentzian metric $h$  in \eqref{e_h},  reparametrized so that $F(\gamma')\equiv  F_l(\gamma')$ is a constant $c=1$, and 
(ii) $\gamma$ can reach the boundary $\partial M_l$ (which is included in the critical region $\Lambda=0$) only at  isolated points $s_j\in I, j=1,2,...$, where
$\gamma'(s_j)=0$ (in this case, $d\Lambda$ does not vanish on all the Kernel of $\omega_{\gamma(s_j)}$).   
\end{enumerate}

 Even if normal neighborhoods do not make sense in general for WRS's, all the geodesics departing from a given point $x_0$ (or more generally from  points close  to $x_0$) have length uniformly bounded from below by a positive constant.
\begin{prop}\label{normalneigh}
Let $(M,\Sigma)$ be a wind Riemannian structure and $x_0\in M$, then there exists $\varepsilon >0$ and a neighborhood $U_0$ of $x_0$, such that 
 the unit $\Sigma$-geodesics 
departing from $x\in  U_0$   are defined on
$[0,\varepsilon)$  and they are  extremizing.
\end{prop}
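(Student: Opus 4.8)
The plan is to work in the associated SSTK spacetime $(\R\times M,g)$ and exploit the translation of $\Sigma$-geodesics into future-directed lightlike pregeodesics, together with the local-in-nature result of Remark~\ref{Riembounded} that $F$ and $F_l$ are Riemannianly lower bounded. First I would fix an auxiliary complete Riemannian metric $h_R$ on $M$ as in Remark~\ref{Riembounded}, so that $\sqrt{h_R(v,v)}\le F(v),F_l(v)$ for all $v\in A\cup A_E$; all lengths estimated below will be estimated in terms of $h_R$. Since the statement is local around $x_0$, I would pick a relatively compact coordinate neighborhood $V$ of $x_0$ and, inside it, a smaller $U_0$ whose $h_R$-closure is contained in $V$; let $\delta>0$ be the $h_R$-distance from $\bar U_0$ to $M\setminus V$. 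The claim will follow from two things: (i) every unit $\Sigma$-geodesic issuing from a point of $U_0$ stays inside $V$ for a uniform $t$-time $\varepsilon$, hence is defined on $[0,\varepsilon)$; and (ii) such a curve is automatically extremizing on that interval.

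For (i), let $\gamma$ be a unit $\Sigma$-geodesic with $\gamma(0)=x\in U_0$, and let $\tilde\gamma(t)=(t,\gamma(t))$ be the corresponding future-directed lightlike pregeodesic reparametrized by $t$. By Proposition~\ref{p_uff} (inequality \eqref{e_uff}) the $F$-length of $\gamma|_{[0,t]}$ is at most $t$; combined with the Riemannian lower bound, the $h_R$-length of $\gamma|_{[0,t]}$ is at most $t$ as well (handling the zero-velocity instants of the critical region with the convention $F(0_p)=1$, which is exactly the continuous extension used in Remark~\ref{r_dominios}). Therefore, as long as $t<\delta$, the curve $\gamma$ cannot leave $V$. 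Now I would invoke the standard fact that lightlike geodesics of a Lorentzian metric on the relatively compact set $\overline V\subset M$ have a uniform lower bound on their (affine) interval of existence once the initial data lie in a compact set; since the relevant initial data here — the point $x\in\bar U_0$ and the lightlike direction $\partial_t+v$ with $v\in\Sigma_x$ — range over a compact subset of the lightlike sphere bundle, there is a uniform $t$-time $\varepsilon\le\delta$ during which $\tilde\gamma$, hence $\gamma$, is defined and remains in $V$.

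For (ii), the point is that on a small enough neighborhood the defining inequality \eqref{eunitgeodesic} for a unit extremizing geodesic holds for genuine pieces of lightlike pregeodesics. Concretely, $\gamma|_{[0,t]}$ is extremizing iff $\gamma(t)\in\hat B^+_\Sigma(x,t)\setminus B^+_\Sigma(x,t)$, and by Proposition~\ref{bolas2} this is equivalent to $(t,\gamma(t))\in J^+(0,x)\setminus I^+(0,x)$, i.e.\ to $(t,\gamma(t))$ being in the causal future but not the chronological future of $(0,x)$ — which is precisely the achronality characterization of lightlike geodesic segments in a spacetime, valid on a small causally convex neighborhood. So I would choose $V$ at the outset to be (the $M$-projection of) a causally convex neighborhood of $(0,x_0)$ in $\R\times M$ small enough that every lightlike pregeodesic segment inside it is achronal; shrinking $U_0$ and $\varepsilon$ accordingly, every unit $\Sigma$-geodesic from $x\in U_0$ on $[0,\varepsilon)$ projects such an achronal segment, hence is extremizing.

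The main obstacle is the careful bookkeeping in the critical wind region: there the velocity $\gamma'$ may vanish at isolated instants (case (3b)(ii) of Subsection~\ref{s_313}), the metrics $F,F_l$ are not continuous at $0_p$, and $h$ in \eqref{e_h} degenerates. One must check that the length estimate of step (i) survives the convention $F(0_p)=1$ (it does, since this is the continuous extension of $t\mapsto F(\alpha'(t))$ along any such curve), and that the achronality argument of step (ii) is unaffected by these instants, which is automatic once one works with the spacetime picture and Proposition~\ref{bolas2} rather than with $F$ directly. The other, more routine, ingredient is the uniform existence time for lightlike geodesics with compact initial data, which is standard ODE theory applied to the geodesic spray on $\overline V$.
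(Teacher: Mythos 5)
Your argument is correct in substance, and it is essentially a reconstruction of the proof that the paper outsources: the paper's own proof of Proposition~\ref{normalneigh} is a one-line appeal to Proposition~6.5 and Lemma~6.4 of \cite{CJSwind}, whose content is exactly what you rebuild (pass to the SSTK spacetime, confine everything to a small causally well-behaved neighborhood of $(0,x_0)$, and read off ``extremizing'' as $(t,\gamma(t))\in J^+(0,x)\setminus I^+(0,x)$ via Proposition~\ref{bolas2}). Two points deserve slightly more care than you give them. First, in step (i) the uniform existence time supplied by ODE theory is in the \emph{affine} parameter of the lightlike geodesics, while the statement concerns the $t$-parametrization; the conversion works because $\df t(\tilde\gamma')>0$ along future-directed causal geodesics and the initial data $\{\partial_t|_{(0,x)}+v:\ x\in \bar U_0,\ v\in\Sigma_x\}$ form a compact subset of the tangent bundle, so the infimum of $t$ at the uniform affine time is positive by continuity. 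Note that ``staying in $V$'' alone does not yield existence (compact Lorentzian manifolds can be null incomplete), so it is the ODE argument, not the length bound, that carries the existence claim; the length bound via $h_R\le F$ and \eqref{e_uff} is what keeps the curve inside the neighborhood where achronality is available, and that part is fine. Second, in step (ii) causal convexity alone does not make lightlike segments achronal: you need the neighborhood to be simultaneously convex normal (so that null geodesics emanating from a point avoid its chronological future inside the neighborhood, as in \cite[Prop.~10.46]{Oneill83}) and causally convex (so that achronality inside the neighborhood upgrades to achronality in all of $\R\times M$, which is what $J^+\setminus I^+$ in Proposition~\ref{bolas2} refers to). Both properties can be arranged simultaneously because the SSTK spacetime, being stably causal, is strongly causal. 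With these two clarifications your proof is complete and follows the same strategy as the cited reference.
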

\begin{proof}
The same proof of \cite[Proposition 6.5]{CJSwind} works in this case just by replacing $x_0$ with $x\in U_0$  where, as in that proposition,  $U_0$ is  the neighborhood obtained in  \cite[Lemma 6.4]{CJSwind}.
\end{proof} 
The WRS is {\em  (geodesically) complete} when  its inextendible geodesics are defined on all $\R$.  The next result proves, in particular, that its extendibility as a geodesic becomes equivalent to continuous extendibility. 
\begin{prop}\label{rr_uff}
  Let $\gamma: [a,b)\rightarrow M$, 
$b<\infty$, be a   $\Sigma$-geodesic. If there 
exists a sequence $\{t_n\}\nearrow b$ such 
that $\{\gamma(t_n)\}_n$ converges to some 
$p\in M$, then $\gamma$ is extendible beyond 
$b$ as a   $\Sigma$-geodesic. 
\end{prop}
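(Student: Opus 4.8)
The plan is to reduce the statement to two facts: (i) the trace $\gamma(t)$ converges in $M$ as $t\nearrow b$, and not merely along the subsequence $\{t_n\}$; and (ii) once this is known, the uniform lower bound on the existence interval of geodesics provided by Proposition~\ref{normalneigh} forces $\gamma$ to remain defined slightly beyond $b$. There is no loss of generality in assuming that $\gamma$ is a \emph{unit} $\Sigma$-geodesic, replacing it by an affine reparametrization if necessary: this affects neither the finiteness of $b$ nor the existence of the convergent subsequence, and the fact that $\gamma$ projects from a lightlike pregeodesic of the associated SSTK splitting makes such a global unit reparametrization available.

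For (i): since $\gamma$ is a unit $\Sigma$-geodesic, at each $t$ the velocity $\gamma'(t)$ lies on the indicatrix $\Sigma_{\gamma(t)}$, so $F(\gamma'(t))=1$ or $F_l(\gamma'(t))=1$ (with the convention $F(0_p)=F_l(0_p)=1$ of Remark~\ref{r_dominios} at the critical zeros). By Remark~\ref{Riembounded} there is a Riemannian metric $h_R$ on $M$ with $\sqrt{h_R(v,v)}\le F(v)$ and $\sqrt{h_R(v,v)}\le F_l(v)$ on the extended domain $A\cup A_E$; hence the $h_R$-speed of $\gamma$ is bounded by $1$, so, writing $\ell_{h_R}$ and $d_{h_R}$ for the associated length and distance,
\[
\ell_{h_R}\bigl(\gamma|_{[s,s']}\bigr)\le s'-s\qquad\text{for all }a\le s<s'<b .
\]
In particular $\gamma|_{[a,b)}$ has finite $h_R$-length $\le b-a$, so its tail length $\ell_{h_R}(\gamma|_{[s,b)})$ tends to $0$ as $s\nearrow b$, and since $d_{h_R}(\gamma(s),\gamma(s'))\le\ell_{h_R}(\gamma|_{[s,s']})$ the curve $s\mapsto\gamma(s)$ is $d_{h_R}$-Cauchy as $s\nearrow b$. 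A Cauchy sequence having a convergent subsequence converges to the same limit, and $d_{h_R}$ induces the manifold topology; therefore $\gamma(t)\to p$ in $M$ as $t\nearrow b$.

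For (ii): apply Proposition~\ref{normalneigh} with $x_0=p$ to obtain $\varepsilon>0$ and a neighborhood $U_0$ of $p$ such that every unit $\Sigma$-geodesic departing from a point of $U_0$ is defined (and extremizing) on $[0,\varepsilon)$. By (i) we may pick $t_\ast\in(a,b)$ with $\gamma(t_\ast)\in U_0$ and $b-t_\ast<\varepsilon$. The curve $t\mapsto\gamma(t_\ast+t)$, $t\in[0,b-t_\ast)$, is again a unit $\Sigma$-geodesic starting at $\gamma(t_\ast)\in U_0$ (its lift in the SSTK splitting is, up to the $t$-translation generated by the flow of the Killing field $K$, a piece of $\tilde\gamma$, hence a future-directed lightlike pregeodesic). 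By local existence and uniqueness of unit $\Sigma$-geodesics with prescribed initial data --- equivalently, of the lightlike pregeodesics of $(\R\times M,g)$ from which they project --- this curve coincides on its whole domain with the unit $\Sigma$-geodesic supplied by Proposition~\ref{normalneigh} starting at $\gamma(t_\ast)$ with the same initial velocity, which is defined on all of $[0,\varepsilon)$, an interval strictly containing $[0,b-t_\ast)$. Reindexing, $\gamma$ extends as a $\Sigma$-geodesic to $[a,t_\ast+\varepsilon)\supsetneq[a,b]$, as claimed.

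The step I expect to require the most care is the uniqueness invoked in (ii). In the critical and strong wind regions a unit $\Sigma$-geodesic may be of several types --- a geodesic of the conic Finsler metric $F$, of the Lorentz--Finsler metric $F_l$, an exceptional geodesic, or a boundary geodesic of the (possibly degenerate) Lorentzian metric $h$ of \eqref{e_h} --- and at points of $S^{n-1}_p$ or of $\partial M_l$ different types can share the same velocity in $TM$. This ambiguity disappears upon lifting to the spacetime: $\tilde\gamma(t)=(t,\gamma(t))$ is a genuine future-directed lightlike pregeodesic of $g$, determined by any initial datum $(\tilde\gamma(t_0),\tilde\gamma'(t_0))$ through the unique null $g$-geodesic with that datum (the $t$-parametrization fixing the residual reparametrization freedom), and the value of the constant $C_{\tilde\gamma}=g(\tilde\gamma',K)$ then selects the relevant case. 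Thus the uniqueness argument should really be carried out at the spacetime level; alternatively, this is precisely the content packaged in \cite[Lemma~6.4 and Proposition~6.5]{CJSwind}, on which Proposition~\ref{normalneigh} rests, so it suffices to quote it.
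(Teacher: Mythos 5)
Your proof is correct and follows essentially the same route as the paper: apply Proposition~\ref{normalneigh} at $p$, note that for $n$ large enough $\gamma(t_n)$ lies in $U_0$ with $b-t_n<\varepsilon$, and invoke uniqueness (at the level of the lightlike pregeodesic $\tilde\gamma$ in the SSTK splitting) to extend $\gamma$ beyond $b$. Your step~(i) establishing convergence of the full trace is not actually needed --- the subsequence already supplies a point $\gamma(t_{n_0})\in U_0$ with $b-t_{n_0}<\varepsilon$ --- but it is harmless, and your careful discussion of why the geodesic-type ambiguity is resolved by lifting to the spacetime fills in a detail the paper leaves implicit.
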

\begin{proof} Apply Proposition \ref{normalneigh} to $p=x_0$ to obtain the corresponding $\varepsilon>0$. Then, for some $n_0$  the length of $\gamma$ between $t_{n_0}$ and $b$ is smaller than $\varepsilon$, and Proposition \ref{normalneigh} can be claimed again to extend $\gamma$ beyond $b$. 
\end{proof}


Notice that, from the spacetime viewpoint, {\em the completeness of the WRS geodesics means that the they cross all the slices $t=$ constant of the SSTK spacetime.} This observation and Proposition \ref{bolas2} underlie the following result
 \cite[Th.~5.9(iv)]{CJSwind}:
 
\begin{thm}\label{t_CauchyHyp}
The following assertions are
equivalent:
\begin{enumerate}[(i)]
\item
A slice $S_t$ (and, then every slice) is a
spacelike Cauchy hypersurface.
\item
 All the c-balls $\hat B_{\Sigma}^+(x,r)$ and
$\hat B_{\Sigma}^-(x,r)$, $r>0$, $x\in M$, are compact.
\item
All the (open) balls $B_{\Sigma}^+(x,r)$ and
$B_{\Sigma}^-(x,r)$, $r>0$, $x\in M$, are precompact.
\item
  $\Sigma$ is geodesically complete.
\end{enumerate}
\end{thm}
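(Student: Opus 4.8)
I would run the whole argument through the causal structure of the associated SSTK spacetime $(\R\times M,g)$, via the dictionary of Proposition~\ref{bolas2}. First note that the complete Killing flow $\varphi_s$ maps $S_0$ isometrically onto $S_s$, so ``some slice is Cauchy'' is equivalent to ``every slice is Cauchy'', and this holds exactly when $(\R\times M,g)$ is globally hyperbolic. By Proposition~\ref{bolas2}, for each $x\in M$ and $r>0$ one has $\{r\}\times\hat B_\Sigma^+(x,r)=J^+((0,x))\cap S_r$ and $\{r\}\times B_\Sigma^+(x,r)=I^+((0,x))\cap S_r$, with the symmetric statements for the backward balls and $S_{-r}$. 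The plan is to prove the cycle $(i)\Rightarrow(ii)\Rightarrow(iii)\Rightarrow(iv)\Rightarrow(i)$.

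$(i)\Rightarrow(ii)$: if the slices are Cauchy then $(\R\times M,g)$ is globally hyperbolic, hence causally simple, so each $J^\pm((0,x))$ is closed and therefore each $\hat B_\Sigma^\pm(x,r)$ is closed; for compactness, given a sequence in $J^+((0,x))\cap S_r$ join its points to $(0,x)$ by causal curves, and if the sequence had no convergent subsequence the limit curve theorem would yield a future-inextendible causal curve from $(0,x)$, which must cross the Cauchy hypersurface $S_r$, forcing (by achronality of $S_r$) the sequence itself to converge to that crossing point, a contradiction. Then $(ii)\Rightarrow(iii)$ is immediate, because $B_\Sigma^\pm(x,r)\subseteq\hat B_\Sigma^\pm(x,r)$ (the c-ball asks only $r=b_\gamma-a_\gamma$, the weaker condition), so the open balls lie in compact sets and are precompact.

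The two substantive implications $(iii)\Rightarrow(iv)$ and $(iv)\Rightarrow(i)$ are instances of a Hopf--Rinow-type mechanism. For $(iv)\Rightarrow(i)$, if a slice failed to be Cauchy then, since $t$ increases along causal curves, there would be a future-inextendible causal curve $\tilde\beta\subseteq\{t<0\}$ with $\sup_{\tilde\beta}t=:b<\infty$ (the past case being symmetric). Reparametrizing by $t$ as $\tilde\beta(t)=(t,\beta(t))$, Proposition~\ref{p_uff} makes $\beta$ a wind curve with $\ell_F(\beta|_{[t_0,t]})\le t-t_0<b-t_0$ for all $t<b$ by \eqref{e_uff}, and, $(\R\times M,g)$ being strongly causal and $\tilde\beta$ inextendible, $\beta(t)$ leaves every compact subset of $M$ as $t\nearrow b$. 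Thus $\beta$ is a wind curve of bounded $F$-length escaping every compact set; but geodesic completeness makes the closed forward $F$-balls compact, so $\beta$ must accumulate and hence be continuously extendible (Proposition~\ref{rr_uff}), contradicting the inextendibility of $\tilde\beta$. The implication $(iii)\Rightarrow(iv)$ is dual in spirit: a future-incomplete $\Sigma$-geodesic (which we may take non-constant, hence the projection of a lightlike pregeodesic), reparametrized proportionally to $t$ on a bounded interval $[0,b)$, projects to a curve leaving every compact set as $t\nearrow b$ (Proposition~\ref{rr_uff}); comparing it with the open balls $B_\Sigma^+$ and using their precompactness together with the uniform lower bound on geodesic lengths of Proposition~\ref{normalneigh} produces an accumulation point and an extension, a contradiction.

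The hard part is precisely this Hopf--Rinow packaging, i.e.\ converting a wind curve (or geodesic) of bounded length that runs off to infinity into an actual contradiction with completeness. Two features make it delicate: $M$ is non-compact and the Zermelo data $(g_R,W)$ need not be bounded, so there is no a priori Arzel\`a--Ascoli estimate and the limit curve theorem must be used in the general form that produces a possibly inextendible limit; and, in the strong-wind region, the forward and backward wind balls and c-balls are \emph{not} monotone in the radius, so one cannot simply dominate $\bigcup_{t<b}\hat B_\Sigma^+(x,t)$ by a single compact c-ball. Propositions~\ref{normalneigh} and~\ref{rr_uff} (uniform geodesic length near a point, and geodesic extendibility $=$ continuous extendibility) are exactly the tools that make these limiting arguments close; the full treatment is \cite[Theorem~5.9(iv)]{CJSwind}, and Theorem~\ref{HopfRinow} below is designed to provide a more streamlined substitute for arguments of this kind.
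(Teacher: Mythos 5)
First, a point of comparison: the paper does not prove this theorem at all --- it is imported verbatim from \cite[Th.~5.9(iv)]{CJSwind} (the citation appears immediately before the statement), so the only question is whether your sketch closes on its own. The easy parts do: the reduction via Proposition~\ref{bolas2} to $\{r\}\times\hat B^\pm_\Sigma(x,r)=J^\pm((0,x))\cap S_{\pm r}$, the implication $(i)\Rightarrow(ii)$ (compactness of $J^+(p)\cap S$ for $S$ Cauchy is standard), and $(ii)\Rightarrow(iii)$ from $B^\pm_\Sigma\subseteq\hat B^\pm_\Sigma$ are all correct.

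The two substantive implications, however, are not established. In $(iv)\Rightarrow(i)$ you write that ``geodesic completeness makes the closed forward $F$-balls compact, so $\beta$ must accumulate''; but within your cycle $(i)\Rightarrow(ii)\Rightarrow(iii)\Rightarrow(iv)\Rightarrow(i)$ the statement ``$(iv)$ implies ball compactness'' \emph{is} the unproven Hopf--Rinow direction $(iv)\Rightarrow(ii)$, so this step is circular as written. (Note also that $\beta$ is only a wind curve, not a geodesic, so $(iv)$ gives no direct handle on it; one genuinely needs ball compactness first, which is why the natural cycle is rather $(iv)\Rightarrow(ii)/(iii)\Rightarrow(i)\Rightarrow(iv)$, the last step being immediate since $\Sigma$-geodesics are projections of inextendible lightlike pregeodesics, which must cross every Cauchy slice.) In $(iii)\Rightarrow(iv)$ you correctly diagnose the obstruction --- in the strong-wind region the balls are not nested in $r$, so $\gamma(t_n)\in\hat B^+_\Sigma(\gamma(0),t_n)$ places the points in \emph{different} compact sets and no accumulation point follows --- but you then only assert that Propositions~\ref{normalneigh} and~\ref{rr_uff} ``make the limiting arguments close'' and defer to \cite{CJSwind}. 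That deferred step (essentially: a limit-curve argument showing that a bounded-length escaping sequence yields a lightlike limit pregeodesic whose extendibility contradicts the escape, in the spirit of Proposition~\ref{prop:connect} and Lemma~\ref{lem:exp}) is precisely the content of the theorem, so the proposal as it stands is an accurate road map with the destination missing rather than a proof.
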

It is also worth pointing out that  the other causal properties of the spacetime (as being globally hyperbolic even if the slices $S_t$ are not Cauchy) can  also be characterized in terms of tidy properties of the corresponding WRS, extending the results (1)---(4) for the stationary case (\S \ref{s222}).

\subsection{The extended conic Finsler metric $\bar F$} \label{s_3.2}
In \cite{JSpisa}, the properties of  distances associated with pseudo-Finsler metrics were studied in full generality. Indeed, the metrics  in that reference  were defined in an open conic  subset  without assuming that they can be extended to the boundary. 
The case of critical wind considered in \S \ref{s223} is included in this general case. Indeed, only the conic Finsler metric $F$ appears there and its extension to the boundary of $A$ in the set $TM\setminus\mathbf{0}$  would be naturally equal to infinity (this is the underlying  reason why this boundary did not contain allowed directions and, so,  it was not included in $A_E$). Bearing this infinite limit in mind, such an $F$ has been extensively studied in   \cite[\S 4]{CJSwind}.

However, as discussed in Remark \ref{r_dominios}, the metrics $F$ and $F_l$ are continuously extendible to the boundary  of $A$ in $TM_l\setminus\mathbf{0}$. 
Next, we will study 
 some elements associated with this extension of  $F$, 
which will allow us to  extend Theorem \ref{t_CauchyHyp}, adding a further characterization to those appearing there. 


\subsubsection{$\bar F$-separation} 
Next, we will consider  the 
extension of $F$ explained in detail in Remark \ref{r_dominios}, in order to introduce a related {\em 
separation}, whose role has some similarities with a 
distance.
Even though the extension of $F$ has  already been taking into account in order to compute the length of wind curves, we will introduce explicitly the notation $\bar F$ in order to distinguish our study from previous ones, where the {\em $F$-separation} only takes into account the open domains (for example, in the  Randers-Kropina case studied in \cite[Section 4]{CJSwind} or in \cite{JSpisa}).
 
\begin{defi} The  {\it extended conic Finsler metric} of a WRS $(M,\Sigma)$ (or its associated SSTK splitting) is the map $\bar F: A\cup A_E \rightarrow [0,+\infty)$ given by \eqref{randers-kropina}
and the associated    {\em $\bar F$-separation} is the map $d_{\bar{F}}: M\times M\rightarrow [0,+\infty]$ given by:\footnote{ \label{f_duda}  For simplicity, we assume here that the  connecting curves   constitute the set of wind curves $C^\Sigma_{p,q}$. As   $\bar F$ is invariant under (positive) reparametrizations, one could also consider 
curves with velocities in $A\cup A_E$ 
even vanishing in the region of mild wind  (the  relevant restriction for such a curve $\gamma$ would be the existence of  a parametrization  $\hat \gamma$  satisfying that $t\rightarrow (t,\hat\gamma(t))$ is causal),  but no more generality would be obtained.  }
\[d_{\bar F}(p,q)=\inf_{\gamma\in C^\Sigma_{p,q}} \ell_{\bar F}(\gamma)\]
for every $p,q\in M$. In particular,  $d_{\bar F}(p,q)=+\infty$ if and only if $C^\Sigma_{p,q}$ is empty.
\end{defi}
Notice that, in this definition, $\ell_{\bar F}(\gamma)$ is equal to the length $\ell_{F}(\gamma)$ in the definition of the wind balls, as we already used there the extension of $F$; however, the  $\bar F$-separation is different to the $F$-separation $d_F$ in  \cite[\S 4]{CJSwind} and \cite{JSpisa}  as, now, curves with velocity in $A_E$ are allowed, that is, $d_{\bar F}\leq d_F$.  
  Consequently, let us denote 
  \begin{equation}\label{bballs}
     B^+_{\bar{F}_p}(r)=\{v\in (A_E)_p\subset T_pM:  \bar F(v)<r\}, \qquad B^+_{\bar F}(p,r)=\{q\in M: d_{\bar F}(p,q)<r \},            
\end{equation}
where the latter is the {\em (forward)
 $d_{\bar F}$-ball  
of radius $r>0$ and center} 
$p\in M$;  consistently, 
$\bar B^+_{\bar F}(p,r)$ 
is its closure and dual {\em backward} notions appear replacing $d_{\bar F}(p,q)$ with $d_{\bar F}(q,p)$ in \eqref{bballs}. 
 

For a wind Minkowski structure of strong wind on $\R^n$, $d_{\bar F}$ becomes discontinuous because it jumps from a finite and locally bounded  value of $d_{\bar F}(0,q)$ when $q$ belongs to $A\cup A_E$ to an infinite value when $q$ is outside.
However, $d_{\bar F}$ may be discontinuous even when it remains finite, resembling the behaviour of the time-separation (Lorentzian distance) on a spacetime.

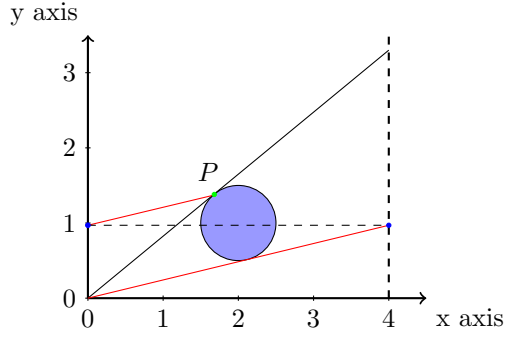
\begin{figure}
\begin{tikzpicture}
\draw[thick,->] (0,0) -- (4.5,0) node[anchor=north west] {x axis};
\draw[thick,->] (0,0) -- (0,3.5) node[anchor=south east] {y axis};
\foreach \x in {0,1,2,3,4}
    \draw (\x cm,1pt) -- (\x cm,-1pt) node[anchor=north] {$\x$};
\foreach \y in {0,1,2,3}
    \draw (1pt,\y cm) -- (-1pt,\y cm) node[anchor=east] {$\y$};
\draw[thick,dashed] (4,0) -- (4,3.5);
\filldraw[fill=blue!40!white, draw=black] (2,1) circle (0.5cm);
\draw (0,0) -- (4,3.3);
\draw[red] (0,0) -- (4,0.97);
\draw[red] (0,0.97) -- (1.67,1.37);
\draw[dashed] (0,0.97) -- (4,0.97);
\filldraw[blue] (0,0.97) circle (0.8pt);
\filldraw[blue] (4,0.97) circle (0.8pt);
\filldraw[blue] (0,0.97) circle (0.8pt);
\filldraw[green] (1.68,1.38) circle (0.8pt);
 \node at (1.6,1.68)(s){$P$};

\end{tikzpicture}
\caption{\label{distance}  Wind Riemannian structure on a torus  $T^2 (= \R^2/\sim$, with  $(x,y)\sim (x',y')$ if and only if $(x-x')/4,(y-y')/4\in\Z $) with non-continuous $d_{\bar F}$  on points at finite separation.  }
\end{figure}
\begin{exe}
Let us consider the WRS induced  on the torus  $T^2=\R^2/4\Z $ from the wind Minkowskian structure on $\R^2$  whose indicatrix is  the sphere of radius $1/2$ centered at $(2,1)$, as depicted in Figure \ref{distance}. Then 
$d_{\bar F}((0,0),P)=1$ ($P$ as in the Figure \ref{distance}), but the points in the red line close to $P$ are at a distance much greater than 1. This concludes that the distance associated with $\bar F$ is not necessarily continuous even when it is finite and the WRS is geodesically complete.
\end{exe}
 Let us discuss some properties  related to {\em generalized distances} (compare with \cite[Section 3.1]{FHS}). The first one is very simple. 
\begin{prop}\label{p_3.9} If $\{p_n\}$ is a sequence in $M$ such that  $d_{\bar F}(p,p_n)\rightarrow 0$ or $d_{\bar F}(p_n,p)\rightarrow 0$  for some $p\in M$, then $p_n\rightarrow p$.
\end{prop}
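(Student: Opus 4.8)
The plan is to bound the $\bar F$-separation from below by an honest (symmetric) Riemannian distance and then invoke the fact that such a distance metrizes the manifold topology. The only ingredient required is the Riemannian lower bound of Remark~\ref{Riembounded}: there is a Riemannian metric $h_R$ on $M$ with $\sqrt{h_R(v,v)}\le \bar F(v)$ for every $v\in A\cup A_E$. Off the zero section this is exactly the cited bound, since there $\bar F$ coincides with the conic metric $F$ of \eqref{randers-kropina}; at the critical zeroes $0_p$ (those with $\Lambda(p)=0$) the conventional value $\bar F(0_p)=1$ still dominates $0=\sqrt{h_R(0_p,0_p)}$, so the bound holds on the whole extended domain.

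First I would establish $d_{\bar F}(p,q)\ge d_{h_R}(p,q)$ for all $p,q\in M$, where $d_{h_R}$ denotes the Riemannian distance of $h_R$. If $C^{\Sigma}_{p,q}$ is empty this is trivial, as then $d_{\bar F}(p,q)=+\infty$. Otherwise, for any wind curve $\gamma\in C^{\Sigma}_{p,q}$ the pointwise domination of the integrands gives
\[
\ell_{\bar F}(\gamma)=\int \bar F(\gamma'(t))\,dt\ \ge\ \int \sqrt{h_R(\gamma'(t),\gamma'(t))}\,dt=\ell_{h_R}(\gamma)\ge d_{h_R}(p,q),
\]
and taking the infimum over all such $\gamma$ yields the inequality.

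The conclusion is then immediate. If $d_{\bar F}(p,p_n)\to 0$ then $d_{h_R}(p,p_n)\to 0$, and since $d_{h_R}$ induces the topology of $M$ (irrespective of whether $h_R$ is complete), $p_n\to p$. If instead $d_{\bar F}(p_n,p)\to 0$, the symmetry of the Riemannian distance gives $d_{h_R}(p,p_n)=d_{h_R}(p_n,p)\to 0$, and again $p_n\to p$.

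I do not expect any real obstacle here; the only points worth a moment's care are that the bound of Remark~\ref{Riembounded} genuinely applies to $\bar F$ on all of $A\cup A_E$ (including the conventional value at the critical zeroes, which is the harmless $1\ge 0$), and that the $\bar F$-length figuring in the definition of $d_{\bar F}$ is computed precisely with velocities lying in $A\cup A_E$, so that the comparison of integrands above is legitimate.
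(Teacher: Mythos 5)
Your argument is correct and is essentially the paper's own proof: both rest on the Riemannian lower bound of Remark~\ref{Riembounded}, deduce $d_{h_R}\le d_{\bar F}$, and conclude from the fact that the Riemannian distance metrizes the manifold topology. Your extra care at the critical zeroes and with the symmetry of $d_{h_R}$ for the backward case is a harmless elaboration of the same route.
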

\begin{proof}
 By Remark \ref{Riembounded}, there exists a Riemannian metric $h_R$ such that $\sqrt{h_R(v,v)}\leq \bar F(v)$ for every $v\in A\cup A_E$. As a consequence the $h_R$-distance from $p_n$ to $p$ is smaller than the $\bar F$-separation and it must go to 0, yielding the result. 
%
\end{proof}

However, as an important difference with the case of generalized distances, the converse may not hold. The following example (a small variation of 
the one introduced in the Figure 4 of \cite{JS}) will be useful for this and 
other purposes. 

\begin{exe}\label{ex_0} Consider the WRS $(g_R,W)$ in Figure \ref{RanKro}, where $g_R$ is just the usual Euclidean metric multiplied by a factor $1/5$, $W_{(x,y)}=f(x) \partial_x$ and the function $f$ behaves as depicted in the graph, so that the lines $x=\pm 1, x=\pm 3$ have critical wind. Clearly $\{p_n=(1+1/n,0)\}\rightarrow p=(1,0)$, but $d_{\bar F}(p_n,p)= \infty$ for all $n\in\N$ (recall, however, $\{d_{\bar F}(p,p_n)\}\rightarrow 0$).
\end{exe}

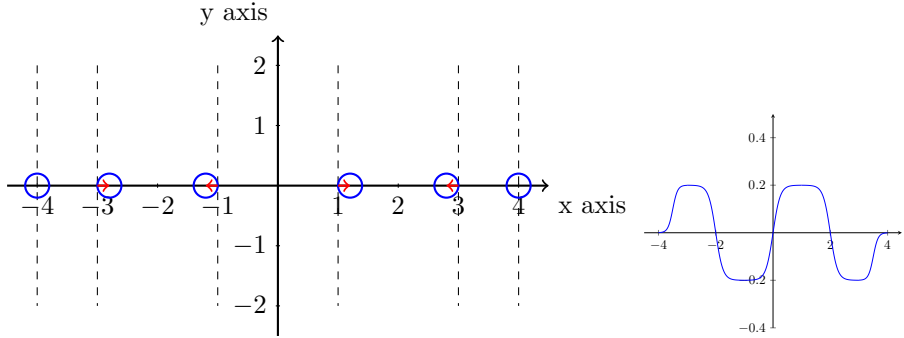
\begin{figure}
 \centering
\begin{tikzpicture}[scale=0.8]
\draw[thick,->] (-4.5,0) -- (4.5,0) node[anchor=north west] {x axis};
\draw[thick,->] (0,-2.5) -- (0,2.5) node[anchor=south east] {y axis};
\foreach \x in {-4,-3,-2,-1,1,2,3,4}
    \draw (\x cm,1pt) -- (\x cm,-1pt) node[anchor=north] {$\x$};
\foreach \y in  {-2,-1,1,2}
    \draw (1pt,\y cm) -- (-1pt,\y cm) node[anchor=east] {$\y$};
    \draw[dashed] (-4,2) -- (-4,-2);
    \draw[thick,blue] (-4,0) circle (0.2cm);
    \draw[dashed] (-3,2) -- (-3,-2);
    \draw[thick,blue] (-2.8,0) circle (0.2cm);
    \draw[thick,red,->] (-3,0) -- (-2.8,0);
    \draw[dashed] (-1,2) -- (-1,-2);
    \draw[thick,blue] (-1.2,0) circle (0.2cm);
    \draw[thick,red,->] (-1,0) -- (-1.2,0);
    \draw[dashed] (4,2) -- (4,-2);
    \draw[thick,blue] (4,0) circle (0.2cm);
    \draw[dashed] (3,2) -- (3,-2);
    \draw[thick,blue] (2.8,0) circle (0.2cm);
    \draw[thick,red,->] (3,0) -- (2.8,0);
    \draw[dashed] (1,2) -- (1,-2);
    \draw[thick,blue] (1.2,0) circle (0.2cm);
    \draw[thick,red,->] (1,0) -- (1.2,0);
   
\end{tikzpicture}
\begin{tikzpicture}[scale=0.5]
    \begin{axis}[
        domain=-4:4,
        xmin=-4.5, xmax=4.5,
        ymin=-0.4, ymax=0.5,
        samples=400,
        axis y line=center,
        axis x line=middle,
    ]
     \addplot+[domain=-4:-3,blue,mark=none,samples=100] {0.1*tanh(6*(x+3.5))+0.1};
     \addplot+[domain=-3:-1,blue,mark=none,samples=100] {0.2*tanh(-4*(x+2))};
      \addplot+[domain=-1:1,blue,mark=none,samples=100] {0.2*tanh(4*(x))};
       \addplot+[domain=1:3,blue,mark=none,samples=100] {0.2*tanh(-4*(x-2))};
        \addplot+[domain=3:4,blue,mark=none,samples=100] {-0.1*tanh(6*(-x+3.5))-0.1};
    \end{axis}
\end{tikzpicture}
 \caption{\label{RanKro}  Even for a Randers-Kropina metric, the $\bar F$-separation may present subtle  properties, forbidden for a generalized distance (as  $\{p_n\}\rightarrow p$ but $d_F(p_n,p)=\infty$, see Example \ref{ex_0}), including ``black hole'' type behaviors  (Example \ref{ex324}).  }
 \end{figure}

With this caution, however,  notions related to  Cauchy sequences and completeness can be maintained.
 

\begin{defi}\label{defcomplete}
 Let $\bar F$ be the extended conic Finsler metric associated with a WRS  $(M,\Sigma)$ and $d_{\bar F}$ its associated separation:
\begin{enumerate}
\item  A subset  $\mathcal A\subset M$ is {\it forward} (resp. {\it backward}) {\it bounded}  if there exists $p\in M$ and $r>0$ such that $\mathcal A\subset B^+_{\bar F}(p,r)$ (resp. $\mathcal A\subset B^-_{\bar F}(p,r)$). 
\item  A sequence  $\{x_i\}$ is {\it forward} (resp. {\it  backward}) {\it  Cauchy}  if  for any $\varepsilon>0$ there exists $N>0$ such that $d_{\bar F}(x_i,x_j)<\varepsilon$ for all $i,j$ with $N<i<j$ (resp. $N<j<i$).
\item   The  space $(M,d_{\bar F})$ is {\it  forward} (resp. {\it backward}) {\it Cauchy complete }  if every  forward (resp. backward) Cauchy sequence converges to a point $q$. 
Moreover, $(M,d_{\bar F})$ is  {\em Cauchy complete}  if it is both, forward and backward Cauchy complete.
\end{enumerate}
\end{defi}

 It is worth noting that, even in the more restrictive framework of generalized distances, the convergence of all the forward Cauchy sequences does not imply the convergence of all the backward ones, so, to be only forward or backward Cauchy complete makes sense (see \cite[Section 3]{FHS} for a comprehensive study).

 In spite of  
 Example \ref{ex_0}, the following property holds for converging Cauchy sequences.
 
 \begin{prop} \label{r_ruf}
 If an $\bar F$-forward Cauchy sequence $\{p_n\}_n$ converges to a point $p\in M$, then 
 $\{d_{\bar F}(p_n,p)\}\rightarrow 0$.

 Moreover, if a forward  Cauchy sequence  $\{p_n\}_n$ admits a partial subsequence $\{p_{n_k}\}_k \rightarrow p\in M$ then all the sequence converges to $p$.
 \end{prop}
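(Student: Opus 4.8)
The plan is to identify $d_{\bar F}$ with a causal arrival time in the associated SSTK splitting and then run a limit curve argument directly in the spacetime; this sidesteps any discussion of lower semicontinuity of the $\bar F$-length, which is delicate here because $\bar F$ is not even lower semicontinuous at the zero vectors of the critical region. The preliminary step is the identity
\[
d_{\bar F}(p,q)=\inf\{T\ge 0:\ (T,q)\in J^+((0,p))\}\qquad\text{for all }p,q\in M .
\]
Here ``$\le$'' follows from Proposition \ref{bolas2} (if $(T,q)\in J^+((0,p))$ then $q\in\hat B^+_\Sigma(p,T)$, so there is $\gamma\in C^\Sigma_{p,q}$ with $b_\gamma-a_\gamma=T$ and hence $\ell_{\bar F}(\gamma)=\ell_F(\gamma)\le T$ by \eqref{e_uff}), while ``$\ge$'' follows by taking a near-minimizing $\gamma\in C^\Sigma_{p,q}$, deleting its constant subarcs contained in the mild wind region, and reparametrizing the rest by $\bar F$-arclength (with the convention $\bar F(0_{p'})=1$ on the critical region): the result is a curve $\hat\gamma:[0,L]\to M$ with $L=\ell_F(\gamma)$ and $\bar F(\hat\gamma')\equiv 1$, so its graph $t\mapsto(t,\hat\gamma(t))$ is a future-directed lightlike curve from $(0,p)$ to $(L,q)$, giving $(L,q)\in J^+((0,p))$.

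For the core estimate, put $\varepsilon_n:=\sup_{j>n}d_{\bar F}(p_n,p_j)$; by the forward Cauchy hypothesis $\varepsilon_n<\infty$ eventually and $\varepsilon_n\to 0$. Fix a large $n$. For each $j>n$ the identity provides a future-directed causal curve $\sigma_{n,j}$ in $\R\times M$ from $(0,p_n)$ to $(T_{n,j},p_j)$ with $0\le T_{n,j}<\varepsilon_n+1/j$, parametrized by the $t$-coordinate; its $M$-projection $\gamma_{n,j}$ satisfies $\ell_{h_R}(\gamma_{n,j})\le\ell_F(\gamma_{n,j})\le T_{n,j}$ by Remark \ref{Riembounded} and \eqref{e_uff}, so (using $p_n\to p$ and $p_j\to p$) for $n$ large and $j$ large all these projections lie in a fixed compact $h_R$-ball around $p$. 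The limit curve theorem for future-directed continuous causal curves (cf.\ \cite[p.\ 54]{BEE}, \cite{CaFlSa}), applied along a subsequence in $j$ with $T_{n,j}\to T_n\le\varepsilon_n$, produces a future-directed causal curve $\sigma_n$ from $(0,p_n)$ to $(T_n,p)$; hence $(T_n,p)\in J^+((0,p_n))$ and, by the identity, $d_{\bar F}(p_n,p)\le T_n\le\varepsilon_n$. Since this holds for all large $n$ and $\varepsilon_n\to 0$, the first assertion follows. The ``moreover'' part is then immediate: a subsequence of a forward Cauchy sequence is forward Cauchy, so the first part applied to $\{p_{n_k}\}_k\to p$ gives $d_{\bar F}(p_{n_k},p)\to 0$, and for each $m$, choosing $n_k>m$, the triangle inequality gives $d_{\bar F}(p_m,p)\le d_{\bar F}(p_m,p_{n_k})+d_{\bar F}(p_{n_k},p)\le\varepsilon_m+d_{\bar F}(p_{n_k},p)\to 0$ as $m\to\infty$, so $p_m\to p$ by Proposition \ref{p_3.9}.

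The step I expect to be most delicate is the preparatory identity: one must check that the $\bar F$-arclength reparametrization through the critical region really yields a future-directed causal graph even at the isolated instants where the velocity becomes parallel to $K$ (this uses that $K$ is lightlike exactly on the critical region), and that the limit curve theorem delivers a curve reaching the endpoint $(T_n,p)$ rather than stopping short (this uses the uniform $h_R$-length bound, which confines the curves to a compact set). Everything else --- the triangle inequality for $d_{\bar F}$, the Riemannian lower bound of Remark \ref{Riembounded}, and the passages to subsequences --- is routine, and the reason for routing the whole argument through $J^+$ is precisely to avoid proving lower semicontinuity of the $\bar F$-length for this discontinuous conic metric.
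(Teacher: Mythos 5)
Your argument is correct, but it follows a genuinely different route from the paper's. The paper's proof extracts a fast subsequence with $d_{\bar F}(p_{n_m},p_{n_{m+1}})<2^{-m}$, uses Proposition \ref{bolas2} to chain the causal relations $(t_{m+1},p_{n_{m+1}})\in J^+(t_m,p_{n_m})$ with $t_{m+1}-t_m<2^{-m}$, and then concludes $(\bar t,p)\in J^+(t_N,p_{n_N})$ (whence $d_{\bar F}(p_{n_N},p)\le \bar t-t_N$) from the closedness of the causal relation inside a convex neighborhood of $(\bar t,p)$ --- no limit curve theorem and no explicitly stated arrival-time identity. You instead isolate the identity $d_{\bar F}(p,q)=\inf\{T\ge 0:(T,q)\in J^+((0,p))\}$ and then run a limit curve argument, using the Riemannian lower bound of Remark \ref{Riembounded} together with \eqref{e_uff} to confine the connecting curves to a compact set. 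Two remarks on what each approach buys. First, the ``$\ge$'' half of your identity (the $\bar F$-arclength reparametrization through the critical region), which you rightly flag as the delicate point, is not an extra cost of your route: the paper's step from $d_{\bar F}(p_{n_m},p_{n_{m+1}})<2^{-m}$ to the existence of $t_{m+1}$ with $t_{m+1}-t_m<2^{-m}$ relies on exactly the same translation, since a wind curve of small $\bar F$-length may a priori be defined on a long parameter interval; making the identity explicit is arguably a gain in transparency. Second, your compactness claim needs one word of care: $h_R$ is only a lower bound and is not asserted to be complete, so a closed $h_R$-ball about $p$ is compact only for small radius --- but this suffices because $\varepsilon_n\to 0$ and $p_n\to p$, so for $n$ large the curves stay in an arbitrarily small ball; and the limit curve theorem delivers a curve actually reaching $(T_n,p)$ because the SSTK spacetime, being stably (hence strongly) causal, is non--totally imprisoning. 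Your treatment of the ``moreover'' part coincides with the paper's (triangle inequality plus Proposition \ref{p_3.9}). Overall the proposal is heavier machinery than the paper's convex-neighborhood argument, but it is more robust, since it does not require the approximating points to be causally related \emph{within} the chosen convex neighborhood.
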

\begin{proof}
 For the first assertion, 
the Cauchy sequence $\{p_n\}_n$ will admit a 
 subsequence $\{p_{n_m}\}_m$ such that  
 $d_{\bar F}(p_{n_m},p_{n_{m+1}})<2^{-m}$. Then, by Proposition \ref{bolas2}, there exists a sequence $t_m$ such that $(t_{m+1},p_{n_{m+1}})\in J^+(t_m,p_{n_m})$ in the associated splitting $(\R\times M,g)$ and $t_{m+1}-t_m<2^{-m}$ for every $m\in \N$. Now define $\bar t=\lim_{m\rightarrow \infty} t_m$, which is finite (since $\bar t  -t_1  =\lim_{k\rightarrow \infty}\sum_{m=1}^k (t_{m+1}-t_m)<
 \sum_{m=1}^\infty 2^{-m} =2  $), consider a convex 
 neighborhood $V$ of $(\bar t,p)$  and let $N$ be big enough such that  $p_{n_m}\in V$ for $m\geq N$. As  $(t_m,p_{n_m})\in J^+(t_N,p_{n_N})$ 
 for $m\geq N$ and the causal relation is closed in any convex neighborhood (see \cite[Lemma 14.2]{Oneill83}), it follows that $(\bar t,p)\in J^+(t_N,p_{n_N})$. Moreover, for $m\geq N$, $d_{\bar F}(p_{n_m},p)\leq \bar t-t_m<\sum_{k=m}^\infty 2^{-m}\rightarrow 0$.  Using the triangle inequality and the definitions of $\bar F$-forward Cauchy sequences, we conclude that $\{d_{\bar F}(p_n,p)\} \rightarrow 0$.

 For the last assertion, recall that, then $\{d_{\bar F}(p_{n_k},p)\}_k \rightarrow 0$ and, by using  again  the triangle identity, $\{d_{\bar F}(p_n,p)\}\rightarrow 0$. So, convergence follows from Proposition~\ref{p_3.9}.
\end{proof}

 \subsubsection{$\bar F$-exponential  and geodesic balls } Next, our aim will be to define an exponential map  $\exp^{\bar F}_p$ for $\bar F$ at each point $p\in M$.
 This will extend the usual exponential for $F$ in $A$ (namely, constructed by using the formal Christoffel symbols  and the associated geodesics)  and will be well-defined and continuous even in the boundary of $(A_E)_p$ in the strong wind region (in this region,
 $\exp^{\bar F}_p$  will yield lightlike pregeodesics of $-h$). 
In order to ensure that both, $F$-geodesics and lightlike $h$-geodesics  match continuously we will work with pregeodesics of the SSTK spacetime and will project them on $M$.
However, recall that $F$ cannot be extended continuously to the zero section  in the critical region, so,   this section will be excluded first and the possibilities of extension will be studied specifically.

\begin{defi}\label{def} Let $\Sigma$ be a WRS on $M$. 
The {\em $\bar F-$exponential} 
is the map
$$
\exp^{\bar F}: \mathcal{U}\subset A \cup (A_E\setminus\mathbf{0}) \longrightarrow M \qquad \qquad v_p \mapsto \alpha(1)
$$ 
where, for each $v_p$, $\alpha$ is the geodesic   
constructed by taking the unique lightlike pregeodesic $\tilde\alpha$ in the associated SSTK splitting $(\R\times M,g)$ written as 
$$\tilde\alpha(s)=(\bar F(v_p) s,\alpha(s)), \qquad s\in [0,1],$$  (i.e., $\tilde\alpha$ is reparametrized  proportionally  to the projection $t:\R\times M\rightarrow \R$) and initial velocity 
$$\tilde\alpha'(0)=(\bar F(v_p),v_p) \in T_{(0,p)}(\R\times M),$$
 while $\mathcal{U}$ is the  open subset of $A \cup (A_E\setminus\mathbf{0})$ containing all the tangent vectors $v_p$ such that their associated pregeodesics $\tilde\alpha$ are defined on all $[0,1]$.
\end{defi}
Recall that the exact behaviour of all the $\Sigma$-geodesics as either 
 $F$, $F_l$ or boundary geodesics explained in Section \ref{s_313} is determined in \cite[Prop. 6.3]{CJSwind} (see also  Theorems 5.5 and 2.53 in that reference). Indeed, the curves $\alpha$ constructed in the previous definition correspond with the $F$-geodesics and the boundary geodesics for $\Sigma$. Bearing this in mind, the  following properties are in order.

\begin{prop}\label{p310}
For each $p\in M$ consider  the starshaped domain $\mathcal{U}_p:=T_pM\cap \mathcal{U}$ and the restriction $\exp^{\bar F}_p:=\exp^{\bar F}|_{\mathcal{U}_p}$.

\smallskip \noindent

\smallskip \noindent
(1) If the wind on $p$ is mild (resp. critical; strong), then $\mathcal{U}_p \subset A_p$   
being $A_p=T_pM\setminus\{0_p\}$ (resp., $\mathcal{U}_p \subset A_p$, being $A_p$  an open half-space of $T_pM$;  $\mathcal{U}_p \subset (A_E)_p$, being $(A_E)_p$ a solid cone without vertex).

\smallskip \noindent
(2)  Assume that the curve $\alpha$ 
constructed for $v_p\in \mathcal{U}_p$ in Definition \ref{def} remains in:

(a) the region of 
mild wind (and, so, $\Sigma$ is the indicatrix of a Randers 
metric around the image of $\alpha$): then, $\alpha$ is an $F$-geodesic (so that $\exp^{\bar F}_p$ will agree with the natural $F$-exponential). 

(b) an open region of 
non-strong wind (and, so, $\Sigma$ is the indicatrix of a Randers-Kropina metric whose Christoffel symbols and geodesics can be computed as in the Finslerian case on all $A$) then, $\alpha$ is a  Randers-Kropina geodesic (according to \cite[Sect. 4]{CJSwind}). 

(c) the region $M_l$ of strong wind, (and, so, $\Sigma$ yields,  the indicatrix of both, a conic Finsler metric $F$ and a Lorentz-Finsler one $F_l$, plus the Lorentz metric $-h$): then $\alpha$ satisfies one of the following two exclusive possibilites: 

(ci) $\alpha'$ lies in the  open cones $A_l$ and $\alpha$ is an $F$-geodesic, or 

(cii) $\alpha'$ lies in the boundary $A_E\setminus A_l$ of the cones (thus, being a {\em boundary geodesic}) and it becomes a pregeodesic of the Lorentzian metric $-h$ on $M_l$.

\smallskip \noindent
(3) The $\bar F$-exponential  $\exp^{\bar F}$ is smooth on the open region $\mathcal{U}\cap A$ and continuous on all $\mathcal{U}$.

\smallskip 

\noindent
(4) The $\bar F$-exponential  $\exp^{\bar F}$ can be extended continuously to the zero section  away from the critical region. Moreover, the restriction of $\exp^{\bar F}$ to $\Sigma$  (or, in general, to the hypersurface $r \Sigma$ for any $r\geq 0$)  can be continuously extended to the zero section in the critical region.
\end{prop}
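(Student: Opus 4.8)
The plan is to lift everything to the future lightlike (pre)geodesic flow of the associated SSTK spacetime $(\R\times M,g)$, and to deduce the regularity of $\exp^{\bar F}$ by composing that flow with the map $v_p\mapsto\tilde\alpha'(0)=(\bar F(v_p),v_p)$ and with the projection $\R\times M\to M$. \emph{Part (1)} is then immediate from the explicit descriptions of $A_p$ and $(A_E)_p$ recalled after Remark~\ref{r_dominios}, together with the inclusion $\mathcal U\subset A\cup(A_E\setminus\mathbf 0)$ of Definition~\ref{def}: in the mild region $A_p=T_pM\setminus\{0_p\}$; in the critical region $A_p=\{v_p:-\omega_p(v_p)>0\}$ is an open half-space and $\mathbf 0$ is excluded; in the strong region $(A_E)_p=\{v_p:-\omega_p(v_p)>0,\ h(v_p,v_p)\geq 0\}$ is a solid half-cone without vertex (the lightlike part $\{h=0\}$ being $A_E\setminus A_l$). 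Star-shapedness of $\mathcal U_p$ holds because replacing $v_p$ by $\lambda v_p$, $\lambda\in(0,1]$, changes $\tilde\alpha$ only by the affine reparametrization $s\mapsto\lambda s$, which cannot shrink its interval of existence below $[0,1]$.

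\emph{Part (2)} refines the identification, recorded right after Definition~\ref{def}, of the curves $\alpha$ occurring there with the $F$-geodesics and the boundary geodesics of $\Sigma$ (cases (1) and (3) of \S\ref{s_313}, via \cite[Prop.~6.3, Th.~5.5]{CJSwind}); one only has to decide which subcase applies from the region where $\alpha$ stays. If $\alpha$ remains in the mild region, $\Sigma$ is there the indicatrix of the Randers metric \eqref{e_Randers} and $\bar F=F$, so $\alpha$ is an $F$-geodesic, agreeing with the natural $F$-exponential; this is (a). If $\alpha$ stays in an open non-strong-wind region, $\Sigma$ is the indicatrix of a Randers--Kropina metric whose geodesics are those of \cite[Sect.~4]{CJSwind}, so $\alpha$ is a Randers--Kropina geodesic; this is (b). If $\alpha$ stays in $M_l$, the exceptional subcase (3a) -- which requires $\Lambda=0$ -- is excluded, so either $\alpha$ is an $F$-geodesic with $\alpha'$ in the open cone $A_l$ (case (ci)), or $\alpha$ is a boundary geodesic that never meets $\partial M_l$, hence by \S\ref{s_313}(3b)(i) a lightlike pregeodesic of $h$ and therefore of $-h$ (pregeodesics being unchanged under an overall constant factor), with $\alpha'$ in $A_E\setminus A_l$ (case (cii)); the two are mutually exclusive since $A_l$ is open and the invariant $C_\rho=g(\tilde\alpha',K)$ is constant along $\tilde\alpha$.

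For \emph{part (3)}, on $\mathcal U\cap A$ the metric $\bar F=F$ given by \eqref{randers-kropina} is smooth: its radicand $h(v,v)=\Lambda g_0(v,v)+\omega(v)^2$ is strictly positive on $A$ and its denominator $-\omega(v)+\sqrt{h(v,v)}$ is nonzero there, while $g_0(v,v)>0$. Hence $v_p\mapsto(\bar F(v_p),v_p)$ is a smooth map into the future lightcone of $g$ over $\{0\}\times M$; composing with the smooth time-$1$ geodesic flow of $g$ and the projection to $M$ gives smoothness of $\exp^{\bar F}$ on $\mathcal U\cap A$. The only points of $\mathcal U$ outside $A$ are those of the boundary $A_E\setminus A_l$ over the strong-wind region, where $\bar F$ equals the common continuous finite value of \eqref{randers-kropina} and \eqref{e_lorentz_Finsler} (Remark~\ref{r_dominios}); so $v_p\mapsto(\bar F(v_p),v_p)$ stays continuous, its image is still a genuine future lightlike vector of $g$ (spatial part on $S^{n-1}_p=\partial A_l\cap\Sigma_p$), and the continuity of $\exp^{\bar F}$ at such points follows from the continuity of the geodesic flow and the openness of $\mathcal U$. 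For \emph{part (4)}, over the mild region $\bar F=F$ is Randers and $\exp^{\bar F}_p$ extends smoothly to $0_p$ with value $p$; over the strong region, $\bar F(v_p)\to 0$ as $v_p\to 0_p$ inside $(A_E)_p$ (by homogeneity of $\bar F$ and its boundedness on the compact set of unit vectors of the cone), hence $\tilde\alpha'(0)\to 0$ and, by continuity of the geodesic flow near the zero section, $\exp^{\bar F}(v_p)\to p$; setting $\exp^{\bar F}_p(0_p):=p$ gives the continuous extension away from the critical region. In the critical region $\bar F$ is discontinuous at $0_p$, so the full $\exp^{\bar F}$ admits no such extension; but on $r\Sigma$ the initial data at $rv_p$, $v_p\in\Sigma_p\setminus\{0_p\}$, is $(r,rv_p)$, which varies continuously with $v_p$ and tends, as $v_p\to 0_p$ along $\Sigma_p$, to $(r,0_p)=rK_{(0,p)}$ -- a future lightlike vector of $g$ since $g(K,K)=-\Lambda(p)=0$ there -- so $\exp^{\bar F}|_{r\Sigma}$ extends continuously to $0_p$ by the projection at parameter $1$ of the $g$-geodesic with that initial velocity.

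The main obstacle is the continuity in part (3) across the boundary $A_E\setminus A_l$ over the strong-wind region, where conic $F$-geodesics must match continuously with lightlike $(-h)$-pregeodesics; this is precisely what passing to the SSTK spacetime resolves, since there both families are simply future lightlike pregeodesics of the single smooth metric $g$ parametrized by $t$, and the gluing is automatic once $\bar F$ is known to extend continuously to that boundary (Remark~\ref{r_dominios}). The corresponding care in part (4), distinguishing the genuine extension to $\mathbf 0$ outside the critical region from the merely restricted extension on the level sets $r\Sigma$ inside it, is of the same nature and raises no new difficulty.
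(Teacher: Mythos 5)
Parts (1) and (2) are fine and essentially match the paper, which disposes of (1) as immediate from the definitions and of (2) by citing Theorems 5.5 and 6.3 of \cite{CJSwind}; your justification of the star-shapedness of $\mathcal U_p$ via the reparametrization $s\mapsto\lambda s$ is a correct addition. The genuine gap is in parts (3) and (4): you identify $\exp^{\bar F}_p(v)$ with $\pi\bigl(\exp^g_{(0,p)}(\bar F(v),v)\bigr)$, i.e.\ with the time-$1$ point of the \emph{affinely} parametrized $g$-geodesic with initial velocity $(\bar F(v),v)$. But by Definition \ref{def} the curve $\tilde\alpha(s)=(\bar F(v)s,\alpha(s))$ is only a \emph{pregeodesic}, reparametrized so that $t\circ\tilde\alpha$ is affine; $t$ is not an affine function of the affine parameter along lightlike $g$-geodesics (already in the static case the conserved quantity is $\Lambda\,t'$, so $t'=C/\Lambda$ varies), hence $\alpha(1)$ is \emph{not} the projection of $\exp^g_{(0,p)}(\bar F(v),v)$. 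The correct formula is $\exp^{\bar F}_p(v)=\pi\bigl(\exp^g_{(0,p)}(\varphi(1,\bar F(v),v)\cdot(\bar F(v),v))\bigr)$, where the rescaling factor $\varphi$ solves the reparametrization ODE \eqref{ODE}; proving that $\varphi$ depends smoothly (resp.\ continuously) on the data is precisely the content of the paper's proof of (3), and your argument omits it entirely, so neither the smoothness on $\mathcal U\cap A$ nor the continuity across $A_E\setminus A_l$ follows from ``composing with the smooth time-$1$ geodesic flow''.

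The same omission undermines part (4). What must be shown there is not merely that $(\bar F(v),v)\to 0$ as $v\to 0_p$, but that the rescaled argument $\varphi(1,\bar F(v),v)\cdot(\bar F(v),v)$ of $\exp^g_{(0,p)}$ tends to $0$; for this the paper uses that $(s,v)\mapsto f(s,F(v),v)$ is positively homogeneous of degree $0$ in $v$, so that $\varphi(1,\bar F(v),v)$ stays locally bounded away from the critical region (by compactness of $\bar F^{-1}(1)\cap TW$ for a compact $W$ avoiding $\Lambda=0$). Your appeal to ``continuity of the geodesic flow near the zero section'' would suffice only once this boundedness of the reparametrization factor is established, and the same remark applies to the extension of $\exp^{\bar F}|_{r\Sigma}$ at critical points. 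The repair is exactly the paper's: either solve the ODE \eqref{ODE} or, equivalently, define $\exp^{\bar F}_p(v)$ as the point where the $g$-geodesic first meets the slice $t=\bar F(v)$ and invoke the implicit function theorem (using that $dt>0$ on future-directed causal vectors); in either formulation the reparametrization must be controlled explicitly. (A minor over-claim: over the mild region the Randers exponential is in general only $C^1$ at $0_p$, so ``extends smoothly'' should be ``extends continuously'', which is all the statement requires.)
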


\begin{proof} 

(1) Straightforward from the definitions of the domains.

(2) See Th. 5.5 and 6.3 in \cite{CJSwind} (for part (cii), recall also Lemma 3.21); this matches with the summary in Section \ref{s_313} above. 

(3)  Taking into account Definition \ref{def}, for each $v\in \mathcal{U}$ ($v\in T_pM$), put $\tau=\bar F(v)$,
consider the lightlike $g$-pregeodesic 
$\tilde\alpha(s)=(\tau s,\alpha(s))$ and reparametrize it as a geodesic   $\hat\alpha(h)=(t(h),x( h ))$  with the same initial velocity $(\tau,v)$. Then $(\tau s, \alpha(s))=(t(h(s)), x(h(s))$ for some function $h(s)$  with $h(0)=0, h'(0)=1$.  Therefore, 
$$ t'(h(s)) \; h'(s)= \tau ,  $$ 
that is, the function $h$  satisfies the ODE 
\begin{equation}\label{ODE}
 h'(s)=1/f(h(s),\tau,v),  \quad h(0)=0, \quad \hbox{where} \; \;  f(\bar s,\tau,v):= \frac{1}{\tau}  \left. \frac{d}{ds}\right|_{s=\bar s} t(\exp^g_{(0,p)}(s(\tau,v)))
 \end{equation}
   whenever $\exp^g_{(0,p)}$ is defined in $s(\tau,v)$. Notice that $f$ can be regarded as a smooth function on some maximal open subset of $\R\times \R_+ \times TM$ and, as $t$ is a temporal function, $f$ is strictly positive when applied on $(s,\tau,v)$ 
whenever $(\tau,v)$, regarded as a vector in $T_{(0,p)}  (\R   \times M)$, is 
 future-directed and causal. Summing up, for any $v=\alpha'(0) \in \mathcal{U}$, 
\begin{equation}\label{expdef}
\exp_p^{\bar F}(v)=\alpha(1)=x(\varphi(1,\bar F(v),v))=\pi(\exp^g_{(0,p)}(\varphi(1,\bar F(v),v)\cdot  (\bar F(v),v)),
\end{equation}
where $s\rightarrow \varphi(s,\tau,v)$ is the solution of \eqref{ODE}  which, obviously, depends smoothly on $(\tau,v)$. So, the expression \eqref{expdef} shows  that $\exp^{\bar F}$ is smooth everywhere  except at most in the boundary of $A_E$, because $\bar F$ is only continuous there.

(4) For the first assertion,  recall from \eqref{ODE} that the map $(s,v)\rightarrow f(s,F(v),v)$ is positive homogeneous of degree 0 in $v$, that is, $f(s,F(\lambda v),\lambda v)= f(s,F(v),v))$ for all $\lambda >0$. Then, $\varphi(1,\bar F(v),v)$ in \eqref{expdef} remains locally bounded outside the Kropina region (as we can take a compact neighborhood $W$ of $p$ which does not intersect the critical region,  and consider that $v$ varies in $\bar F^{-1}(1)\cap TW$, which is compact). So, when $v$ goes to $0$ the  variable  of the $g$-exponential in \eqref{expdef} goes to $0$ and $\exp_p^{\bar F}(v)$ goes to $p$, as required. 


For the second one, notice that $\bar F$ cannot be continuously extended to $0_p$ whenever $p$ lies in the critical region. However, as $\bar F$ is equal to one on $\Sigma\setminus\mathbf{0}$, it can be extended continuously to $1$ on $\Sigma$, and the continuity of the exponential in the SSTK spacetime ensures the result. Obviously, this can be extended to the case $r \Sigma$,   now  extending  $\bar F$   continuously  as  $\bar F(0)=r$. 
\end{proof}

\begin{rem}\label{r311}  (1)  It is easy to check that  $\exp^{\bar F}_p$ may be non-differentiable at the boundary $A_E\setminus A$, as the initial 
 data for $\gamma$ in Definition \ref{def} depends on $\bar F(v_p)$ and $\bar F$ may be non-smooth on the boundary.  Indeed, the root in the expression  \eqref{randers-kropina}
 becomes 0 there; in the region of strong wind, these zeroes are the lightlike vectors for the Lorentzian metric $-h$, which implies  non-smoothability there. 
 
 However, direct computations in a concrete example may be illustrative. Consider  as Zermelo data in $\R^2$ the usual scalar product and the wind vector $W=\partial_x+\partial_y$. The function 
 $\phi(s)=  \bar F(s\partial_x|_{(0,0)}+\partial_y|_{(0,0)})$ for $s\geq 0$ is smooth for $s>0$ but only continuous 
 for $s=0$. Indeed, for each $s\geq  0$,  there exists  a unique point $(x_s,y_s)$ in the convex part the indicatrix  $\Sigma_{(0,0)} = \{(x,y)\in T_{(0,0)}\R^2: (x-1)^2+(y-1)^2  =1  \}$ such that  $\phi(s)\cdot (x_s,y_s)=(s,1)$. Then, $\phi(s)=1/ y_s$, $x_s=s y_s$, substituting in the indicatrix $x_s$, necessarily $y_s=\left(s+1\pm \sqrt{2s}\right)/(s^2+1)$, the choice of its convex part selects the  positive sign for $ y_s$ and, then, $\phi(s)=(s^2+1)/\left(s+1 + \sqrt{2s}\right)$.

 (2) About the question of continuity in part  (4),  notice that, when the integral curve of $K$ through a point $p$ of  the critical region is a geodesic, then the continuous extension of $\exp^F|_{\Sigma}$ to $0_p$ yields simply the point $p$. However, this does not occur when such an integral curve is not a geodesic. So, in general, it is impossible to extend continuously the full exponential $\exp^{\bar F}_p$ to all the zero section. 
\end{rem}

Summing up, Proposition \ref{p310} (4) allows one to extend continuously the domain $\mathcal{U}$ of $\exp^{\bar F}$ in order to include the zero section away from the critical region. 
 What is more, the critical  region will not be an obstacle to define geodesic balls of  radius $r$, since one can extend the restriction of 
$\exp^{\bar F}$ to any $r\Sigma$.

\begin{defi} Let $\Sigma$ be a WRS on $M$, $p\in M$.
For any $r\geq 0$ such that $\bar B^+_{\bar{F}_p}(r)$
(recall the notation \eqref{bballs}) is included in the 
starshaped domain $\mathcal{U}_p \cup \{0_p\}$, the {\em  (forward) geodesic  $\bar F$-sphere of center $p$ and radius $r$} is the  set
$$
\mathcal{S}^+_{\bar F}(p,r)=\exp_p^{\bar F}(r \Sigma_p),
$$
where $\exp_p^{\bar F}$ is assumed to be extended to $0_p$, if necessary. Then, for any $r_0 >0$ such that 
$\mathcal{S}^+_{\bar F}(p,r)$ is defined for all $0\leq r<r_0$, the {\em (forward) geodesic $\bar F$-ball} and {\em closed (forward) geodesic $\bar F$-ball} of center $p$ and radius $r$ are, resp.,  the  sets
$$
\mathcal{B}^+_{\bar F}(p,r)=
\cup_{0\leq r<r_0} \mathcal{S}^+_{\bar F}(p,r), 
\qquad \bar{\mathcal{B}}^+_{\bar F}(p,r)= 
\hbox{closure}(\mathcal{B}^+_{\bar F}(p,r_0)).
$$
\end{defi}
\begin{rem}\label{rrr} Necessarily, $\mathcal{S}^+_{\bar F}(p,r)$ is compact and, whenever $\bar B^+_{\bar{F}_p}(r_0)\subset \mathcal{U}_p$ then $$\bar{\mathcal{B}}^+_{\bar F}(p,r_0)=\mathcal{B}^+_{\bar F}(p,r_0) \cup \mathcal{S}^+_{\bar F}(p,r_0)$$
and it is compact too (indeed, the right-hand side  is the projection of the compact subset in the SSTK splitting obtained by exponentianing the null vectors $w$ tangent to $(0,p)$ such that $0\leq dt(w)\leq r_0$ with a $t$-reparametrized pregeodesic  analogously as in the proof of part (3) in Proposition \ref{p310}). 
 However, $\mathcal{B}^+_{\bar F}(p,r)$ is not necessarily open; in fact, this happens even for wind Minkowskian structures, as $A_p\cup (A_E)_p$ is not open away from the mild wind region.\end{rem}

\begin{defi}
A curve $\gamma: [a,b]\rightarrow M$, $p=\gamma(a), v_p=\gamma'(a)$ is an {\em  $\bar F$-geodesic} of a WRS $\Sigma$ if either  $v_p\in A_p\cup((A_E)_p\setminus{0_p})$ and   $\gamma(s)=\exp^{\bar F}_p((s-a)v_p)$ for all $s\in[a,b]$ or $\gamma'(a)=0_p$, $\Lambda(p)=0$ and there exists some $c>0$ such that the curve $[a,b]\ni s \mapsto (c(s-a),\gamma(s))\in \R\times M$ is  a lightlike pregeodesic  of the SSTK spacetime  with initial condition $c\partial_t|_{(0,p)}$ at the instant $a$.   
\end{defi}
The previous definition extends naturally to non-compact intervals. Namely, for the case $[a,b)$  no modification is necessary, while for the case $(a,b]$ one assumes that, for all $t\in (a,b)$, the restriction $\gamma|_{[t,b]}$ is a geodesic. Indeed, {\em  the $\bar F$-geodesics  are the  $F$-geodesics, boundary geodesics and exceptional geodesics} explained in Section~\ref{s_313} ({\em i.e. all the $\Sigma$-geodesics except the $F_l$ ones}).  The lack of symmetry of $\Sigma$ makes meaningful the following distinction, as in the case of classical Finsler metrics. 

\begin{defi}\label{d316}  $\bar F$ is {\em forward} (resp. {\em backward}) {\em complete} if the domain of all its inextendible $\bar F$-geodesics is upper (resp. lower) unbounded. 
\end{defi}
 Clearly, given a WRS $\Sigma$,  $\bar F$ is forward complete if and only if the extended conic Finsler metric for the reverse WRS $\tilde \Sigma= -\Sigma$ is backward complete. 

\subsection{Main result on the completeness of $\Sigma$} \label{s_3.3}
In order to obtain our main result, let us start  strengthening the relations between $\bar F$-balls and geodesics. 
\begin{lemma}\label{lem:exp}
Let $(M,\Sigma)$ be a WRS and $x_0\in M$. For each neighborhood $W_0$ of $x_0$ there exists  another neighborhood $U_0\subset W_0$ and some $\varepsilon>0$  such that, for every $x\in U_0$ and $0<r< \varepsilon$: 
\begin{enumerate}[(i)]
\item Both, the c-balls $\hat{B}^+_{\Sigma}(x,r)$ and the closed $\bar F$-balls $\bar{B}^+_{\bar F}(x,r)$    are  compact  and included in $W_0$.  
\item  
$\bar B^+_{\bar F_x}(r)\subset T_xM$ is included in  $\mathcal{U}_x \cup \{0_x\}$   and, then, the geodesic and metric $\bar F$-balls coincide, i.e.: 
$\mathcal{B}^+_{\bar F}(x,r) = B^+_{\bar F}(x,r)$ and $    
\bar{\mathcal{B}}^+_{\bar F}(x,r) =\bar B^
+_{\bar F}(x,r).$


 

 

\end{enumerate}
\end{lemma}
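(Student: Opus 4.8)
The plan is to derive both items from the previously established local control of $\Sigma$-geodesics and c-balls, namely Proposition~\ref{normalneigh}, Proposition~\ref{p310}, and the continuity properties of the SSTK exponential, by passing back and forth between $M$ and the associated SSTK splitting $(\R\times M, g)$. The underlying principle is that everything here is controlled by the $g$-exponential at $(0,x_0)$ restricted to the future causal cone, which is smooth (away from the critical-region zero section) and, crucially, is a local diffeomorphism at $(0,x_0)$ on the open cone and a well-behaved continuous map up to the boundary.

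First I would fix a convex normal neighborhood $V$ of $(0,x_0)$ in the SSTK spacetime contained in (the part of the splitting over) $W_0$, and choose $\varepsilon_1>0$ and a neighborhood $U_1$ of $x_0$ from Proposition~\ref{normalneigh}, so that the unit $\Sigma$-geodesics issuing from any $x\in U_1$ are defined and extremizing on $[0,\varepsilon_1)$. Shrinking further, I would use Proposition~\ref{bolas2} (the identification $J^\pm(0,x_0)=\cup_{s\ge 0}\{\pm s\}\times \hat B_\Sigma^\pm(x_0,s)$) together with the fact that the causal relation is closed inside the convex neighborhood $V$ (O'Neill, Lemma 14.2) to conclude that for $x\in U_0$ and $r<\varepsilon$ the set $\{r\}\times \hat B^+_\Sigma(x,r)$ is a closed subset of $J^+((0,x),(r,\cdot))\cap V$, hence compact, and contained in $W_0$; this gives the c-ball part of (i). For the $\bar F$-ball part of (i), I would invoke Remark~\ref{rrr}: the closed geodesic $\bar F$-ball $\bar{\mathcal B}^+_{\bar F}(x,r)$ is the projection to $M$ of the compact set obtained by exponentiating, with a $t$-reparametrized pregeodesic, the null vectors $w$ at $(0,x)$ with $0\le dt(w)\le r$ — compactness of that set of initial velocities (using Remark~\ref{Riembounded}, the Riemannian lower bound, to get a bound on the vectors) plus continuity of $\exp^g$ on $V$ gives compactness, and then I must check $\bar B^+_{\bar F}(x,r)=\bar{\mathcal B}^+_{\bar F}(x,r)$, which is exactly the content of (ii), so (i) and (ii) are proved together.

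For (ii), the heart is to show $\bar B^+_{\bar F_x}(r)\subset \mathcal U_x\cup\{0_x\}$ for $r$ small and uniformly in $x\in U_0$: a vector $v_x$ with $\bar F(v_x)<r$ has, by Remark~\ref{Riembounded}, $h_R$-norm at most $r$, so it ranges in a compact subset of $T_xM$ as $x$ ranges over a compact neighborhood; since the maximal domain $\mathcal U$ of $\exp^{\bar F}$ is open and contains the zero section away from the critical region (Proposition~\ref{p310}(4)) and contains small vectors in the critical region by the extendibility of the restriction of $\exp^{\bar F}$ to $r\Sigma$, a compactness/openness argument produces the uniform $\varepsilon$. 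Once $\bar B^+_{\bar F_x}(r)\subset\mathcal U_x\cup\{0_x\}$, the equality $\mathcal B^+_{\bar F}(x,r)=B^+_{\bar F}(x,r)$ follows because every $\bar F$-geodesic from $x$ of $\bar F$-length $<r$ is, by Proposition~\ref{normalneigh}, extremizing, so it realizes $d_{\bar F}$ (giving $\mathcal B^+_{\bar F}(x,r)\subset B^+_{\bar F}(x,r)$), while conversely any $q$ with $d_{\bar F}(x,q)<r$ is joined by a wind curve of $\bar F$-length $<r$ whose graph is causal and lies in $J^+$, and the description of causal relations in the convex neighborhood $V$ forces the connecting curve to be (a reparametrization of) an $\bar F$-geodesic staying in $\bar B^+_{\bar F_x}(r)$, hence $q\in\mathcal B^+_{\bar F}(x,r)$; the closed-ball equality then follows by taking closures and using compactness of $\mathcal S^+_{\bar F}(x,r)$ from Remark~\ref{rrr}.

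The main obstacle I anticipate is making the choice of $\varepsilon$ and $U_0$ genuinely \emph{uniform} across the critical region, where $\bar F$ and $\exp^{\bar F}$ are merely continuous (not smooth) and where the zero section belongs to the indicatrix, so that the usual inverse-function-theorem argument for Finsler normal balls is unavailable. The remedy is to do all the work upstairs in the SSTK splitting, where the relevant object is the honest smooth map $\exp^g_{(0,x)}$ on the (closed, locally compact) future causal cone and the continuity statements of Proposition~\ref{p310}(3)--(4); one extracts the uniform $\varepsilon$ from a compactness argument over a compact neighborhood of $x_0$ in $M$ applied to the set of normalized future-directed causal initial velocities, rather than from a local diffeomorphism statement. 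A secondary technical point is verifying that no wind curve of small $\bar F$-length can ``escape'' $W_0$ and re-enter — this is handled by the closedness of the causal relation in the convex neighborhood $V$ together with the Riemannian lower bound of Remark~\ref{Riembounded}, which prevents $\bar F$-short curves from having large $h_R$-length.
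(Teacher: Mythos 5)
Your overall strategy (work upstairs in the SSTK splitting, get compactness of small causal diamonds, and use Proposition~\ref{normalneigh} to control the domains of geodesics) is the same as the paper's, and your treatment of (i) and of the inclusion $\bar B^+_{\bar F_x}(r)\subset \mathcal{U}_x\cup\{0_x\}$ is essentially sound (the paper simply quotes \cite[Lemma 6.4]{CJSwind} for the compact c-balls, and deduces compactness of $\bar B^+_{\bar F}(x,r)$ from the inclusion $B^+_{\bar F}(x,r)\subset \cup_{r'\in[0,r]}\hat B^+_\Sigma(x,r')$). The genuine gap is in the inclusion $B^+_{\bar F}(x,r)\subseteq \mathcal{B}^+_{\bar F}(x,r)$. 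You assert that a wind curve of $\bar F$-length $<r$ from $x$ to $q$ is ``forced to be (a reparametrization of) an $\bar F$-geodesic'' by the description of causal relations in a convex neighborhood. That is false: such a wind curve is an arbitrary curve whose graph is causal, and its graph will in general land in $I^+(0,x)$, where the unique connecting geodesic of the convex neighborhood is \emph{timelike} and hence projects to no $\Sigma$-geodesic at all; if instead the graph lands on $J^+\setminus I^+$, the connecting lightlike geodesic could a priori project to an $F_l$-geodesic rather than an $\bar F$-geodesic. What is actually needed is to \emph{construct} a different curve: starting from $z'=(r',q)\in J^+(0,x)$, slide down the integral curve of $\partial_t$ to the \emph{first} point $z=(t_0,q)$ lying in $J^+(0,x)$; the existence of such a first point uses that $J^+(0,x)$ is closed inside the ambient neighborhood, and its minimality is what guarantees both that the lightlike geodesic from $(0,x)$ to $z$ exists and that its projection is an $\bar F$-geodesic (an $F$- or boundary geodesic, not an $F_l$-one) reaching $q$ with parameter $t_0\le r'<r$. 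This step is the heart of (ii) and is absent from your argument.

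A secondary, related problem is your reliance on a convex normal neighborhood $V$ together with O'Neill's Lemma~14.2. Closedness of the \emph{intrinsic} causal relation of $V$ does not control causal curves that leave $V$ and re-enter, so it neither gives closedness of $J^+(0,x)\cap(\{r\}\times M)$ nor licenses the ``first point'' argument above. The paper instead places $[0,\varepsilon]\times U_0$ inside a globally hyperbolic (hence causally simple and causally convex) neighborhood $U$ with $\pi(U)\subset W_0$, via \cite[Th.~2.14]{MinSan}; this is exactly what makes $J^+(0,x)$ closed where it is needed. Your proposal acknowledges the uniformity issue across the critical region but not this causal-convexity issue, and the ``escape and re-enter'' worry you raise at the end is resolved by causal convexity of $U$, not by the Riemannian lower bound alone.
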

\begin{proof} 
Roughly, the result follows  from \cite[Lemma 6.4]{CJSwind} and Prop. \ref{normalneigh}. Indeed, \cite[Lemma 6.4]{CJSwind} provides directly both, the neighborhood $U_0$ and $\varepsilon>0$ such that the c-balls $\hat{B}^+_{\Sigma}(x,r)$ are compact  for every  $x\in U_0$ and   $0<r<\varepsilon$.  Even more, this also proves that $\cup_{r'\in [0,r]}\hat{B}^+_{\Sigma}(x,r')$  (which is equal to the projection on $M$ of the compact set $J^+(0,x)\cap t^{-1}([0,r])$, recall Proposition \ref{bolas2}  and the choice of $U_0$) is compact too. As this set includes 
${B}^+_{\bar F}(x,r)$, this proves the compactness of its closure, concluding $(i)$.


 For $(ii)$  we claim first  that if $U_0$ is  obtained as above then 
 $[0,\varepsilon]\times U_0$ can be assumed to lie in a 
 globally hyperbolic (and thus, causally simple) 
 neighborhood $U$ of the SSTK splitting satisfying $\pi(U)
 \subset W_0$. Indeed,  $(0,x_0)$ admits an arbitrarily 
 small globally hyperbolic neighborhood $U$ (see \cite[Th. 
 2.14]{MinSan}), and we have just to assume that $\pi(U)
 \subset W_0$. Then, there exists a small neighborhood 
 $W'_0\subset \pi(U)$ of $x_0$ and some $\varepsilon'>0$ 
 such that $[0,\varepsilon']\times W'_0\subset U$. So, the 
 claimed property follows just by repeating the step $(i)$  
 imposing $U_0\subset W_0' (\subset W_0)$ and choosing $
 \varepsilon<\varepsilon'$.

 Proposition \ref{normalneigh} implies that $\bar B^+_{\bar F_x}
(r)\setminus \{0_x\} \subset \mathcal{U}_x$ for every $x\in U_0$.  Even more, for the required equalities, the inclusion
$\subseteq$ follows trivially, as  all the points in the geodesic ball are reached by a geodesic of $\bar F$-
length smaller  than the radius (or equal to it in the closed case, see Remark 
\ref{rrr}), which can be regarded as a wind curve (indeed, so is the $\bar F$-geodesic whenever $r\leq 1$;  otherwise, it can be reparametrized affinely as a wind curve). For the inclusion $\supseteq$ in the first equality, recall that for each $y\in B^+_{\bar F}(x,r)$ the connecting wind curve $\gamma: [0,r']\rightarrow M$, $r'<r$ yields a causal curve $[0,r']\ni t\mapsto (t,\gamma(t))$. Then $z'=(r',\gamma(r'))\in J^+(0,x)$ and  being $[0,\varepsilon]\times U_0$ included in the causally simple subset $U$,  there exists a first point $z\in  [0,\varepsilon]\times W_0$  on the integral curve of $\partial_t|_{z'}$ which belongs to $J^+(0,x)$. Then, the unique lightlike geodesic from $(0,x)$ to $z$ projects into the required $\bar F$-geodesic. Moreover, then the inclusion $\supseteq$ in the second equality also follows just applying the compactness  (and then closedness)  of $\bar{\mathcal{B}}^+_{\bar F}(p,r_0)$,  see Remark \ref{rrr}. 
\end{proof}

\begin{prop}\label{prop:connect}
Let $(M,\Sigma)$ be a WRS and $\bar F$ the associated extended conic Finsler metric. Given  $p\in M$ assume that $\exp^{\bar F}_p$ (resp.  exponential at $p$ for the reverse WRS $\tilde \Sigma=-\Sigma$) is defined in the whole domain $(A_E)_p\subset T_pM$ (resp. $-(A_E)_p\subset T_pM$) of $F_p$. Then, for any $q\in M$, $ q\neq p$, such that $d_{\bar F}(p,q)$ (resp. $d_{\bar F}(q,p)$)  is finite, there exists a minimizing $\bar{F}$-geodesic from $p$ to $q$ (resp. from $q$ to $p$).
\end{prop}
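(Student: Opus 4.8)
The goal is a Hopf--Rinow-type statement: geodesic completeness of $\exp^{\bar F}_p$ on the full domain $(A_E)_p$ forces the existence of a minimizing $\bar F$-geodesic to every point at finite $\bar F$-separation. I would mimic the classical Finslerian proof, but carried out at the spacetime level using the SSTK splitting, because $\bar F$ is only continuous (not smooth) on the boundary $A_E\setminus A$ and $d_{\bar F}$ is not even continuous in general, so the usual metric-space arguments cannot be applied naively.

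\medskip

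\noindent\emph{Step 1: reduce to a statement about causal curves.} Fix $q$ with $r:=d_{\bar F}(p,q)<\infty$. By Proposition~\ref{bolas2}, $q\in \bar B^+_\Sigma(p,s)$-type sets precisely corresponds to $(s,q)\in J^+(0,p)$ in the splitting $(\R\times M,g)$; more precisely $d_{\bar F}(p,q)$ is the infimum of the $t$-coordinate increments along causal curves from $(0,p)$ to the vertical line $\R\times\{q\}$. So a minimizing $\bar F$-geodesic from $p$ to $q$ is exactly (the projection of) a lightlike geodesic of the spacetime from $(0,p)$ that meets $\R\times\{q\}$ for the first time at $t=r$, reparametrized by $t$. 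Thus it suffices to produce such a lightlike geodesic.

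\medskip

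\noindent\emph{Step 2: the classical ``advancing front'' argument.} Set
$$
\rho:=\sup\{\, s\in[0,r] : \text{there is a minimizing }\bar F\text{-geodesic from }p\text{ to some }y\text{ with }d_{\bar F}(p,y)=s \text{ lying on a minimizer to }q\,\},
$$
or more precisely run the standard argument: using Lemma~\ref{lem:exp} one gets a small geodesic ball $\mathcal B^+_{\bar F}(p,\varepsilon)=B^+_{\bar F}(p,\varepsilon)$ that is compact, and on its geodesic sphere $\mathcal S^+_{\bar F}(p,\delta)$ one picks (by compactness of the sphere, Remark~\ref{rrr}, plus lower semicontinuity of $d_{\bar F}(\cdot,q)$ coming from the Riemannian lower bound of Remark~\ref{Riembounded}) a point $x_1$ minimizing $d_{\bar F}(\cdot,q)$; one shows $d_{\bar F}(p,x_1)+d_{\bar F}(x_1,q)=r$ via the triangle inequality for $d_{\bar F}$ and the local minimization property of short $\bar F$-geodesics (Proposition~\ref{normalneigh}). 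Then let $\gamma$ be the inextendible unit-speed $\bar F$-geodesic through $p$ and $x_1$; it is defined on all of $[0,\infty)$ by the completeness hypothesis on $\exp^{\bar F}_p$ (this is exactly where the hypothesis that $\exp^{\bar F}_p$ is defined on the whole of $(A_E)_p$ is used: the initial direction of $\gamma$ lies in $(A_E)_p$, and all rescalings of it lie in $\mathcal U_p$). Define $a:=\sup\{t\in[0,r]: d_{\bar F}(p,\gamma(t))=t \text{ and } d_{\bar F}(\gamma(t),q)=r-t\}$. One shows the set is closed (so the sup is attained), using that $\gamma$ is defined past $a$, that $d_{\bar F}(p,\gamma(\cdot))$ and $d_{\bar F}(\gamma(\cdot),q)$ are upper/lower semicontinuous enough along the geodesic (here again Proposition~\ref{r_ruf} on convergent Cauchy sequences and the closedness of $J^+$ in convex neighborhoods, as in the proof of Proposition~\ref{r_ruf}, do the job), and that if $a<r$ one can apply Lemma~\ref{lem:exp} at $\gamma(a)$ to push $a$ strictly forward, a contradiction. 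Hence $a=r$ and $\gamma|_{[0,r]}$ is the desired minimizer.

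\medskip

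\noindent\emph{Step 3: handling the boundary/critical subtleties.} The danger points are: (a) the initial direction of $\gamma$ could lie in $(A_E)_p\setminus A_p$ (boundary of the cone), where $\bar F$ is not smooth and $\exp^{\bar F}_p$ only continuous --- but Proposition~\ref{p310}(2c)(cii) says $\gamma$ is then a pregeodesic of $-h$ in $M_l$, hence still a genuine spacetime lightlike geodesic after $t$-reparametrization, so Step~2 goes through; (b) $\gamma$ could cross the critical region with vanishing velocity (a boundary geodesic reaching $\partial M_l$, Section~\ref{s_313}(3b)(ii)) --- this is accommodated by working with the spacetime lightlike geodesic $\tilde\gamma$ throughout, where nothing degenerates, and only projecting at the end. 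The ``first point on the integral curve of $\partial_t$ that lies in $J^+(0,p)$'' trick from the proof of Lemma~\ref{lem:exp}(ii) guarantees the connecting lightlike geodesic we extract is the first crossing, i.e. genuinely minimizing; (c) $d_{\bar F}$ may be discontinuous, so I must be careful to use only the one-sided semicontinuity that actually holds: $d_{\bar F}(p,\cdot)$ controlled from above along geodesics (follows from existence of short geodesics realizing nearby separations) and $d_{\bar F}(\cdot,q)$ controlled by the triangle inequality.

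\medskip

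\noindent\textbf{Main obstacle.} The classical Hopf--Rinow proof leans on continuity of the distance and on properness of closed balls; here neither is available globally. The crux is therefore the closedness of the set defining $a$ in Step~2 --- i.e. showing that if $\gamma(t_n)$ realizes $d_{\bar F}(p,\gamma(t_n))=t_n$ and $d_{\bar F}(\gamma(t_n),q)=r-t_n$ with $t_n\nearrow a$, then the same holds at $a$ --- which must be done by lifting to the spacetime and invoking closedness of the causal relation in a convex neighborhood of $(a,\gamma(a))$ together with the first-crossing argument, rather than by any continuity of $d_{\bar F}$. The reverse-WRS statement follows formally by applying the forward statement to $\tilde\Sigma=-\Sigma$, since the hypothesis on the exponential at $p$ for $\tilde\Sigma$ is exactly the transcription of backward completeness, as noted after Definition~\ref{d316}.
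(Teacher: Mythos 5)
Your proposal follows the classical Hopf--Rinow ``advancing front'' scheme (small geodesic sphere, minimizer $x_1$ of $d_{\bar F}(\cdot,q)$ on it, then push the parameter $a$ forward along the geodesic through $x_1$), whereas the paper argues by taking a minimizing sequence of wind curves $\alpha_n\in C^\Sigma_{p,q}$, lifting them to causal curves $\tilde\alpha_n(t)=(t,\alpha_n(t))$ in the SSTK spacetime, extracting a limit curve, and showing (via openness of $I^+$ and Proposition~\ref{bolas2}) that the limit is a lightlike pregeodesic realizing the separation; the classification of $\Sigma$-geodesics in \S\ref{s_313} then excludes the pure $F_l$-case, and the hypothesis on $\exp^{\bar F}_p$ is used only at the very end to extend the limit curve (a priori defined on a subinterval) to all of $[0,d_{\bar F}(p,q)]$. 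This is not just a stylistic difference: the limit-curve route deliberately avoids the two points where your scheme breaks.

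Concretely: (1) the existence of the minimizer $x_1$ on the compact geodesic sphere needs $d_{\bar F}(\cdot,q)$ to be lower semicontinuous there, and this does \emph{not} follow from the Riemannian lower bound of Remark~\ref{Riembounded} (that bound only gives $d_{h_R}\le d_{\bar F}$, hence Proposition~\ref{p_3.9}, and says nothing about semicontinuity). It can actually fail: in the situation of Example~\ref{ex_0}, points $p_n$ approaching a critical point $p$ from the side the wind blows away from can reach a given $q$ at uniformly bounded separation while $d_{\bar F}(p,q)=\infty$, so $\liminf_n d_{\bar F}(p_n,q)<d_{\bar F}(p,q)$. (2) The identity $d_{\bar F}(p,x_1)+d_{\bar F}(x_1,q)=r$ rests on every near-minimizing curve from $p$ to $q$ crossing the geodesic sphere $\mathcal{S}^+_{\bar F}(p,\delta)$; in the strong-wind region the reachable set $\bar{\mathcal B}^+_{\bar F}(p,\delta)$ is a solid cone whose boundary contains a lateral part made of points reached by boundary geodesics at separation strictly less than $\delta$ (cf.\ Remark~\ref{rrr}: $\mathcal B^+_{\bar F}(p,r)$ need not be open), and a wind curve may exit through that lateral part rather than through the sphere of radius $\delta$. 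Your Step~3 and ``main obstacle'' paragraph correctly identify that the only robust fix is to lift everything to the spacetime and use closedness of causality in convex neighborhoods --- but once you do that systematically you are in effect rebuilding the paper's limit-curve argument, and the earlier sphere-based steps remain unjustified. So the proposal as written has a genuine gap; the intended statement is better reached by the minimizing-sequence/limit-curve argument.
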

\begin{proof}
Consider a sequence of  wind  curves $\alpha_n$ from $p$ to $q$ 
such that $\lim_n \ell_{\bar F}(\alpha_n)=d_{\bar F}(p,q)$. Let  $\tilde\alpha_n(t)=(t,\alpha_n(t))$  be the graph of $\alpha_n$ in the associated SSTK spacetime $(\R\times M,g)$.
 As  the curves $\tilde\alpha_n$ are  future-directed causal, then  there exists a limit curve $\tilde\alpha(s)= (s,\alpha(s))$  starting at  $(0,p)$ defined in a subinterval  $I$  of $[0,d_{\bar F}(p,q)]$,  $0\in I$,  such that $\lim_n \alpha_n(s)=\alpha(s)$  for all $s\in I$  (see  \cite[Sect. 3.3]{BEE} or  \cite[Lemma 5.7]{CJSwind}). This  implies that $\tilde\alpha$ is a lightlike pregeodesic  and
\begin{equation}\label{longitud}
d_{\bar F}(p,\alpha(s_0))=\ell_{\bar F}(\alpha|_{[0,s_0]}),
\end{equation}
for all $s_0\in I\setminus\{0\}$. 
 Indeed, if  either  $\tilde\alpha$ is not a lightlike pregeodesic  or \eqref{longitud} does not hold (only the inequality $<$ should be taken into account then), we claim that there exists $\delta>0$ such that $(s_0-\delta,\alpha(s_0))\in J^+(0,p)$. This follows from Proposition \ref{bolas2} if \eqref{longitud} does not hold. When $\tilde\alpha$ is not a pregeodesic, observe  that
  $(s_0,\alpha(s_0)) \in I^+(0,p)$ 
 (see \cite[Proposition 10.46]
 {Oneill83})  and, as  the chronological relation is open,  there exists $\delta>0$ such that $(s_0-
 \delta,\alpha(s_0))\in I^+(0,p)\subseteq J^+(0,p)$.   So, for the claimed $\delta$, 
 there exists a future-directed causal curve $\tilde\rho$ from $(0,p)$ to $(s_0-\delta,\alpha(s_0))$. Concatenating this curve with $[s_0-\delta,\ell_{\bar F}(\alpha_n)-\delta]\ni s\rightarrow (s-\delta,\alpha_n(s))\in \R\times M$, we obtain  future-directed causal 
 curves   $\tilde\gamma_n$  from $(0,p)$ to $(\ell_{\bar F}(\alpha_n)-
 \delta,q)$. Taking $n$ big enough, one gets  that the projection of $\tilde\gamma_n$ is a curve from $p$ to $q$ which has length equal to  $
 \ell_{\bar F}(\alpha_n)-\delta<d_{\bar F}(p,q)$ 
   in 
 contradiction with   the definition of $d_{\bar F}(p,q)$.  Therefore, $\tilde\alpha$ is a lightlike pregeodesic and \eqref{longitud} holds,  as required.

Now, the discussion in Section~\ref{s_313} 
   implies that,  being $\tilde\alpha$  a lightlike pregeodesic, necessarily $\alpha$ is either  a unit  geodesic   for $F$ or $F_l$, or a boundary geodesic  (exceptional geodesics are excluded as $p\neq q$). 
    Moreover, if $\alpha$ were an $F_l$-geodesic which is not a boundary one,  then it would be included in $M_l$, $\alpha'$ could not vanish and $0<F(\alpha')<F_l(\alpha')\equiv 1$  (recall that $F\leq F_l$ holds always and, if the equality occurred in our case, then $h(\alpha',\alpha')=0$ at some point, that is, $\alpha$ would be the boundary geodesic corresponding  to the  initial  velocity at that point). Then, 
$$\ell_{\bar F}(\alpha|_{[0,s_0]})=\int_0^{s_0}F(\alpha'(s))ds< d_{\bar F}(p,\alpha(s_0)).$$
As this is a contradiction with \eqref{longitud}, $\alpha$ becomes  either a boundary or $F$-geodesic and, thus, an $\bar F$-geodesic. So, by the hypothesis on  $\exp^{\bar F}_p$, $\alpha$ is defined in $[0,d_{\bar F}(p,q)]$, which concludes the proof. 
\end{proof} 
 Finally, recalling Definitions \ref{defcomplete} and \ref{d316}, we can state our main result. 

\begin{thm}\label{HopfRinow}
Let $(M,\bar F)$ be the conic Finsler manifold associated with a WRS $(M,\Sigma)$. Then $(M,\Sigma)$ is geodesically complete if and only if $(M,\bar F)$ is  geodesically  complete. In addition, the following conditions are equivalent:
\begin{enumerate}
\item[(a)] The  space $(M,d_{\bar F})$ is forward (resp. backward) Cauchy complete.
\item[(b)] $(M,\bar F)$ is forward (resp. backward) geodesically complete.
\item[(c)] Every closed and forward (resp. backward) bounded subset of $(M,d_{\bar F})$ is compact.
\end{enumerate}
Moreover, any of the above conditions implies that $\bar F_l$ is forward (resp. backward) geodesically complete.

Finally, if $p_0\in M$ has the following property: $d_{\bar F}(p_0,q)$ (resp. $d_{\bar F}(q,p_0)$) is finite for every $q\in M$, then the above conditions are equivalent to 
\begin{enumerate}
\item[(d)] At   $p_0\in M$,  $\exp^{\bar F}_{p_0}$ 
(resp. backward $\exp^{\bar F}_{p_0}$) is defined on all 
$(A  \cup A_E  )_{p_0}$  (resp. $-(A  \cup A_E  )_{p_0}$). 
\end{enumerate}
\end{thm}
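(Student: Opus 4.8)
The plan is to prove the cycle (a)$\Rightarrow$(b)$\Rightarrow$(c)$\Rightarrow$(a), then the ``Moreover'' clause, to deduce from these the equivalence between geodesic completeness of $(M,\Sigma)$ and of $(M,\bar F)$, and finally to append (d) under the extra finiteness hypothesis. The structural fact used everywhere is the identification recalled in \S\ref{s_313}: the $\bar F$-geodesics are exactly the $\Sigma$-geodesics of types (1), (3a) and (3b), so the type-(2) $F_l$-geodesics are the only $\Sigma$-geodesics that are not $\bar F$-geodesics. In particular the implication ``$(M,\Sigma)$ geodesically complete $\Rightarrow$ $(M,\bar F)$ geodesically complete'' is immediate; for the converse it suffices that completeness of $\bar F$ (forward and backward) forces the $F_l$-geodesics to be complete too, which is precisely the ``Moreover'' clause applied in both time orientations, and then every $\Sigma$-geodesic is complete.

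For (a)$\Rightarrow$(b), suppose $\bar F$ is not forward complete and pick an inextendible forward $\bar F$-geodesic $\gamma\colon[0,b)\to M$, $b<\infty$, parametrized with $\bar F(\gamma')\equiv 1$ (the constant exceptional case is trivial). Then $d_{\bar F}(\gamma(t_m),\gamma(t_n))\le t_n-t_m$ for $t_m<t_n$, so $\{\gamma(t_n)\}$ with $t_n\nearrow b$ is forward Cauchy; by (a) it converges to some $p\in M$, and Proposition~\ref{rr_uff} extends $\gamma$ beyond $b$ (the conserved quantity $C_\rho=g(\rho',K)$ keeps the extended lightlike pregeodesic of type (1)/(3), hence an $\bar F$-geodesic), a contradiction. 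For (c)$\Rightarrow$(a): a forward Cauchy sequence is forward bounded, hence has a convergent subsequence by (c), and the last assertion of Proposition~\ref{r_ruf} upgrades this to convergence of the whole sequence, so $(M,d_{\bar F})$ is forward Cauchy complete.

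The core is (b)$\Rightarrow$(c). Forward completeness first gives that $\exp^{\bar F}_p$ is defined on all of $(A\cup A_E)_p$ for every $p$ (this also yields (b)$\Rightarrow$(d)): a maximal $\bar F$-geodesic with prescribed initial velocity exists on $[0,\varepsilon)$ by Proposition~\ref{normalneigh} and, by forward completeness, on all of $[0,+\infty)$. Now if $\mathcal{A}$ is closed and forward bounded, say $\mathcal{A}\subset B^+_{\bar F}(p,r)$, Proposition~\ref{prop:connect} joins each $q\in\mathcal{A}$ to $p$ by a minimizing $\bar F$-geodesic, so $\mathcal{A}\subset\mathcal{B}^+_{\bar F}(p,r)\subset\bar{\mathcal{B}}^+_{\bar F}(p,r)$; it remains to show $\bar{\mathcal{B}}^+_{\bar F}(p,r)$ is compact. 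The Riemannian lower bound of Remark~\ref{Riembounded} makes $\bar B^+_{\bar F_p}(r)\subset T_pM$ bounded, and the domain description preceding Remark~\ref{r_dominios} makes $(A\cup A_E)_p\setminus\{0_p\}$ closed in $T_pM\setminus\{0_p\}$; since $\exp^{\bar F}_p$ need not extend continuously to $0_p$ in the critical region, one realizes, exactly as in Remark~\ref{rrr} and Lemma~\ref{lem:exp}, $\bar{\mathcal{B}}^+_{\bar F}(p,r)$ as the projection onto $M$ of the compact set obtained by exponentiating, in the SSTK spacetime and reparametrized by $t$, the null vectors $w$ at $(0,p)$ with $0\le\df t(w)\le r$. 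Being closed inside a compact set, $\mathcal{A}$ is compact. This is the step I expect to be the real obstacle: the pathologies of $\exp^{\bar F}_p$ near the zero section in the critical region and near the boundary cones in the strong-wind region are exactly what force the argument to be lifted to the SSTK spacetime, where the $g$-exponential is smooth and where small neighbourhoods are globally hyperbolic.

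For the ``Moreover'' clause, assume (a) and let $\gamma\colon[0,b)\to M_l$, $b<\infty$, be an inextendible forward $F_l$-geodesic, genuinely of type (2) (a boundary geodesic is already an $\bar F$-geodesic). Parametrizing with $F_l(\gamma')\equiv 1$ and using $\bar F=F\le F_l$ on the overlap gives $d_{\bar F}(\gamma(t_m),\gamma(t_n))\le\int_{t_m}^{t_n}\bar F(\gamma')\le t_n-t_m$, so $\{\gamma(t_n)\}$ with $t_n\nearrow b$ is forward Cauchy, converges by (a), and Proposition~\ref{rr_uff} (for the $\Sigma$-geodesic $\gamma$) extends it past $b$, a contradiction. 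All backward statements follow by replacing $\Sigma$ with the reverse WRS $\tilde\Sigma=-\Sigma$, as noted after Definition~\ref{d316}. Finally, under the finiteness hypothesis on $p_0$: (b)$\Rightarrow$(d) was obtained above, and for (d)$\Rightarrow$(c), a closed forward bounded $\mathcal{A}\subset B^+_{\bar F}(p,r)$ satisfies $d_{\bar F}(p_0,q)\le d_{\bar F}(p_0,p)+r=:R<\infty$ for every $q\in\mathcal{A}$, so Proposition~\ref{prop:connect} gives $\mathcal{A}\subset\bar{\mathcal{B}}^+_{\bar F}(p_0,R)$, compact by the argument of (b)$\Rightarrow$(c) since (d) ensures $\bar B^+_{\bar F_{p_0}}(R)\subset\mathcal{U}_{p_0}$; hence $\mathcal{A}$ is compact.
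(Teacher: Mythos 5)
Your proposal is correct and follows essentially the same route as the paper's proof: the cycle (a)$\Rightarrow$(b)$\Rightarrow$(c)$\Rightarrow$(a) via Propositions~\ref{rr_uff}, \ref{r_ruf}, \ref{prop:connect} and Remark~\ref{rrr}, the ``Moreover'' clause via $\ell_{\bar F}\leq \ell_{\bar F_l}$, the reduction of the first statement to that clause, and (d) handled through $p_0$ exactly as in the paper. You are somewhat more explicit than the paper about the trivial implication (b)$\Rightarrow$(d), about why the extension in (a)$\Rightarrow$(b) stays an $\bar F$-geodesic (conservation of $C_\rho$), and about realizing the closed geodesic ball as the projection of a compact set in the SSTK spacetime, but the substance is identical.
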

\begin{proof} 
Let us start with the equivalences among the displayed items. The introduced framework will allow us to use  
standard  arguments  as in \cite[Th. 6.61]{BaChSh00}.
for $(a)\Rightarrow (b) \Rightarrow (c)$ 
and $(d)\Rightarrow (c)$.  
We will reason always for the forward case. 


$(a)\Rightarrow (b)$. Otherwise take an  
 incomplete geodesic $\gamma: [0,b)\rightarrow M, b<\infty$. As the sequence $\{\gamma (b-1/m)\}_{m>1/b}$ is forward Cauchy, then it will have a limit $p$, obtaining so a contradiction (recall Proposition \ref{rr_uff}).  


$(b)\Rightarrow (c)$. Let $\{p_m\}_m$ be any forward bounded sequence and let $p\in M, r>0$ such that $\{p_m\}_m\subset B^+_{\bar F}(p,r)$. By hypothesis, Proposition \ref{prop:connect} is applicable and, thus, $\{p_m\}_m$ lies in the geodesic ball $\mathcal{B}^+_{\bar F}(p,r)$. Then, the existence of a converging partial subsequence of $\{p_m\}_m$ follows because  the closure of this ball is compact (recall  Remark \ref{rrr}). 

$(d)\Rightarrow (c)$. As in the previous case, choosing now $p=p_0$ (recall that  $B^+_{\bar F}(p,r)\subset B^+_{\bar F}(p_0,r+d_{\bar F}(p_0,p))$ and $d_{\bar F}(p_0,p)$ is finite). 

$(c)\Rightarrow (a)$.  Let $\{p_m\}_m$ be a forward Cauchy sequence. 
The triangle inequality for $d_{\bar F}$ 
implies that $\{p_m\}_m$ is forward bounded. 
 So, its closure is compact and $\{p_m\}_m$ 
admits a converging subsequence. Therefore, 
 the result follows by Proposition 
\ref{r_ruf}.  

\smallskip
\noindent
 For the  statement  about the geodesic 
completeness of $F_l$, observe that $\ell_{\bar F}
(\alpha)\leq \ell_{\bar F_l} (\alpha)$. This easily 
implies that if $\gamma:[0,b)\rightarrow M$,  $b<
\infty$,    is an $F_l$-geodesic, then  the 
sequence $\{\gamma(t_m)\}_m$ with $\{t_m=b-1/m\}$ 
is forward Cauchy for $d_{\bar F}$. Thus,  $\gamma$ 
is extendible to $b$  as an $F_l$-geodesic (Proposition \ref{rr_uff}), as required. 

 Finally, the first statement follows because the $\Sigma$-geodesics are the geodesics of both $\bar F$ and $F_l$ and we have just proved that the completeness of $\bar F$-geodesics implies the completeness of $F_l$-geodesics.    
\end{proof}

\begin{exe} \label{ex324}
In the last theorem,   the finiteness of $d(p_0,q)$ (or $d(q,p_0)$)  for every $q\in M$ is necessary  to obtain  the equivalence between $(d)$ and the other properties.  In fact, if we consider $M=\R^n\setminus\{(0,0,\ldots,0,-1)\}$, $g_R$ the Euclidean metric and $W=(0,\ldots,0,1)$, the corresponding Kropina metric satisfies that the  forward exponential  map  at  ${\bf 0}=(0,0,\ldots,0)$ is defined in the maximal domain, but the associated distance is not forward complete.  Notice, however, that
$d_F({\bf 0},q)=+\infty$ whenever $q=(x_1,\ldots,x_n)$ satisfies $x_n\leq 0$. 

 As a more sophisticated example, recall  Example \ref{ex_0} (Figure \ref{RanKro}). 
The regions $x\geq 4$ and $x\leq -4$ are Euclidean but  cannot be connected by any wind curve.  So, the exponential at any point in the region $x\geq 4$ will be defined on all its tangent space, even if one removes a point of the region $x\leq -4$ (making incomplete the WRS).

 Recall also that the regions $-3<x<-1$ and $1<x<3$ behave as a sort of ``black holes'', namely,  once you enter there, it is not possible to go out. 
\end{exe}

\section{Some applications for SSTK spacetimes}\label{s_4}

\subsection{Cauchy hypersurfaces in SSTK}\label{s_4.1}
As a direct consequence of Theorems \ref{t_CauchyHyp} and \ref{HopfRinow}, a characterization of Cauchy hypersurfaces is obtained.
\begin{cor} Let $(\R\times M,g)$ be an SSTK splitting. The following assertions are
equivalent:
\begin{enumerate}\item The slices $t=$ constant are Cauchy hypersurfaces.
\item The associated WRS, $\Sigma$, is  geodesically  complete.
\item The extended conic Finsler metric $\bar F$ of  $\Sigma$ is complete (in any of the equivalent senses of Theorem \ref{HopfRinow}).
\end{enumerate}
\end{cor}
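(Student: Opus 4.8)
The plan is to obtain the corollary by directly chaining together the two main results of Section~\ref{s_3}. Starting from an arbitrary SSTK splitting $(\R\times M,g)$, I would first attach to it the canonical WRS $\Sigma$ and Zermelo data $(g_R,W)$ of Proposition~\ref{p2.8}. Since $\Sigma$ is invariant under pointwise conformal changes, and since the property of a slice being a spacelike Cauchy hypersurface is a causal --- hence conformal --- invariant, there is no loss of generality in replacing $g$ by the normalized representative $\tilde g=\Omega g$ with $\tilde\Lambda+|\tilde\omega|^2_{\tilde g_0}\equiv 1$ used throughout subsection~\ref{s_3.1} (Remark~\ref{r_norm}). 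With this normalization in force, all of $\Sigma$, $\bar F$, $d_{\bar F}$ and the attendant notions of geodesic and completeness make literal sense, and the statement of the corollary is exactly the one these objects were designed for.

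The equivalence (1)$\,\Leftrightarrow\,$(2) is then precisely the equivalence between items (i) and (iv) of Theorem~\ref{t_CauchyHyp}. The equivalence (2)$\,\Leftrightarrow\,$(3) is exactly the first assertion of Theorem~\ref{HopfRinow}: $(M,\Sigma)$ is geodesically complete if and only if the associated extended conic Finsler manifold $(M,\bar F)$ is geodesically complete. For the parenthetical clause in (3), I would simply remark that geodesic completeness of $\bar F$ means forward and backward completeness simultaneously, so that applying the equivalences (a)$\,\Leftrightarrow\,$(b)$\,\Leftrightarrow\,$(c) of Theorem~\ref{HopfRinow} in both their forward and backward forms yields the chain: $\bar F$ complete $\Leftrightarrow$ $(M,d_{\bar F})$ Cauchy complete $\Leftrightarrow$ every closed, forward-and-backward bounded subset of $(M,d_{\bar F})$ is compact. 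Condition (d) there carries the auxiliary finiteness hypothesis and is therefore not part of this unconditional list.

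Since the whole argument reduces to invoking two already-established theorems, there is essentially no real obstacle. The single point that deserves explicit care --- and the only place I would spend more than a sentence --- is the conformal-normalization remark above: the reader must see why it is harmless that Section~\ref{s_3} fixed a particular conformal representative while the corollary is phrased for a general SSTK splitting, and this is dispatched by the conformal invariance of $\Sigma$ (Proposition~\ref{p2.8}) together with the conformal invariance of the Cauchy-hypersurface property.
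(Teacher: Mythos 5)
Your proposal is correct and follows exactly the paper's route: the corollary is stated there as a direct consequence of Theorem \ref{t_CauchyHyp} (items (i)$\Leftrightarrow$(iv)) and the first assertion of Theorem \ref{HopfRinow}, which is precisely your chain. The extra care you devote to the conformal normalization is a reasonable (if implicit in the paper) clarification and does not change the argument.
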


This precise characterization of WRS-completeness/Cauchy slices may be useful for concrete examples. Indeed, incompleteness would follow just by finding an incomplete Cauchy sequence or geodesic for $\bar F$.  Next,  some criteria to ensure completeness are obtained.

\begin{prop}\label{p_4.2}  Let $\Sigma$ be a WRS on $M$ with associated conic Finsler metric\footnote{Notice that  either using the conic Finsler metric $F$ 
or the extended one $\bar F$  in the statement of this result are equivalent.  Indeed,  the inequality \eqref{eH}  holds when  $F$ is replaced by $\bar F$, since $\bar F$ is continuous everywhere  except at most in the zeroes of the critical region, where the inequality holds trivially.} 
$F$, and let $H$ be an auxiliary  Finsler metric such that
\begin{equation}
\label{eH}
H(v)\leq F(v), \qquad \qquad \forall v\in A .
\end{equation}
If $H$ is  forward (resp. backward)  complete then $\Sigma$ is forward (resp. backward) geodesically  complete.
\end{prop}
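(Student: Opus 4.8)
The plan is to reduce the forward geodesic completeness of $\Sigma$ to the forward geodesic completeness of the auxiliary Finsler metric $H$, using the characterization of completeness via Cauchy sequences in Theorem \ref{HopfRinow} together with the length comparison \eqref{eH}. By Theorem \ref{HopfRinow}, it suffices to show that $(M,d_{\bar F})$ is forward Cauchy complete; equivalently (by the equivalence $(a)\Leftrightarrow(b)\Leftrightarrow(c)$ there), that every closed and forward $d_{\bar F}$-bounded subset is compact. So first I would take a forward Cauchy sequence $\{x_i\}$ for $d_{\bar F}$ and aim to produce a limit point.

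The key step is the following length/distance comparison. Footnote to the statement already notes that \eqref{eH} holds with $F$ replaced by $\bar F$ (the inequality being trivial at the zeroes of the critical region, where $\bar F=1$). Hence, for any wind curve $\gamma\in C^\Sigma_{p,q}$ one has $\ell_H(\gamma)\leq \ell_{\bar F}(\gamma)$, and since $H$ is a genuine (everywhere-defined) Finsler metric, $d_H(p,q)\leq \ell_H(\gamma)$. Taking infimum over $\gamma\in C^\Sigma_{p,q}$ gives $d_H(p,q)\leq d_{\bar F}(p,q)$ for all $p,q\in M$ (with the convention that the right side is $+\infty$ when $C^\Sigma_{p,q}=\emptyset$, in which case nothing is claimed). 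Consequently, a forward $d_{\bar F}$-Cauchy sequence is automatically forward $d_H$-Cauchy, and a forward $d_{\bar F}$-bounded set is forward $d_H$-bounded.

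Now I would invoke the Finslerian Hopf--Rinow theorem (e.g. \cite[Th. 6.6.1]{BaChSh00}): since $H$ is forward complete, every forward $d_H$-Cauchy sequence converges in $(M,d_H)$, hence in the manifold topology of $M$ (forward and backward balls of a Finsler metric generate the topology). Thus our forward $d_{\bar F}$-Cauchy sequence $\{x_i\}$ converges to some $q\in M$. Since it is forward $d_{\bar F}$-Cauchy and admits a convergent subsequence in the manifold topology, Proposition \ref{r_ruf} yields that the whole sequence converges to $q$ as a consequence of $d_{\bar F}$-convergence, so $(M,d_{\bar F})$ is forward Cauchy complete. By Theorem \ref{HopfRinow} $(a)\Rightarrow(b)$, $\bar F$ is forward geodesically complete, and by the first statement of that theorem $\Sigma$ is forward geodesically complete. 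The backward case is identical after replacing $\Sigma$ by the reverse WRS $\tilde\Sigma=-\Sigma$, noting $H$ backward complete means its reverse $\tilde H$ is forward complete and $\tilde H\leq \tilde F$ on $-A$.

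The main subtlety, and the only point requiring care, is the interplay between the manifold topology and $d_{\bar F}$-convergence: $d_{\bar F}$ need not be continuous (Examples \ref{ex_0}, \ref{ex324}), and $d_{\bar F}$-convergence is a priori neither implied by nor implies manifold convergence in general. What saves the argument is precisely Proposition \ref{r_ruf}: a forward $d_{\bar F}$-Cauchy sequence with a manifold-convergent subsequence converges (in $d_{\bar F}$, hence in the manifold topology via Proposition \ref{p_3.9}) to that subsequential limit. One should also check the harmless edge case where no wind curve joins the relevant points, so that $d_{\bar F}=+\infty$ and the sequence cannot be $d_{\bar F}$-Cauchy in the first place; this is built into Definition \ref{defcomplete}.
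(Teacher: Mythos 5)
Your proposal is correct and follows essentially the same route as the paper: the comparison $d_H\leq d_{\bar F}$ (coming from $H\leq\bar F$ on the extended domain) shows every forward $d_{\bar F}$-Cauchy sequence is forward $d_H$-Cauchy, hence convergent by the Finslerian Hopf--Rinow theorem for the complete metric $H$, which gives condition (a) of Theorem \ref{HopfRinow} and therefore geodesic completeness of $\bar F$ and of $\Sigma$. The paper's proof is just a terser version of this; your extra appeal to Proposition \ref{r_ruf} is harmless but not needed, since Definition \ref{defcomplete}(3) only asks for convergence of the sequence to a point of $M$.
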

\begin{proof} The usual generalized distance $d_H$ associated with $H$ satisfies that $d_H \leq d_{\bar F}$. Therefore, any (forward or backward) Cauchy sequence for $\bar F$ will also be Cauchy for $H$ and, as $H$ is 
complete, then it will  converge to some point of $M$.
\end{proof}
 Recall that the inequality \eqref{eH} means that the  indicatrix $\Sigma_H$ encloses the indicatrix $\Sigma$ at each $p\in M$.  In order to apply  the previous criterion sharply, the indicatrix $\Sigma_H$  should fit as much as possible in $\Sigma$. However, Riemannian metrics are easier to  handle  in practice and this may be enough in some particular cases.  First criteria are the following. 

\begin{prop}\label{p_4.3}
Let $\Sigma$ be a WRS with Zermelo data $(g_R,W)$, and let $W^\flat=g_R(W,\cdot)$.   Then, $\Sigma$ is complete if one of the following conditions holds:
\begin{enumerate}
\item[(i)] the conformal metric $h^*=g_R/(1+|W|_R)^2$ is complete, or
\item[(ii)] the metric $g_R$ is complete and $|W|_R$ grows at most linearly with the $d_R$-distance, that is, there exist $\lambda_0, \lambda_1>0$, $x_0\in M$:
$$
|W_x|_R \leq \lambda_0+\lambda_1 \; d_R(x_0,x) \qquad \qquad \forall x\in M.
 $$
\end{enumerate}
\end{prop}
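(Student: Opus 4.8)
The plan is to reduce both criteria (i) and (ii) to an application of Proposition \ref{p_4.2}, by exhibiting in each case an auxiliary (complete) Finsler metric $H$ satisfying $H\leq F$ on $A$. The key quantitative input is an explicit pointwise lower bound for the conic Finsler metric $F$ (equivalently $\bar F$) in terms of the Zermelo data $(g_R,W)$. Recall that, by Proposition \ref{p2.8}, $\Sigma_p$ is the $g_R$-sphere of center $W_p$ and radius $1$; hence a vector $v\in A_p$ lies on the ray through a point $w$ of the convex part of $\Sigma_p$, and $F(v)$ is the factor by which $v$ must be scaled to reach $\Sigma_p$. Geometrically, for $v$ in the conic domain, $F(v)=1/\lambda$ where $\lambda>0$ is the largest scalar with $\lambda v\in$ (the convex side of) $\Sigma_p$; the triangle inequality in $(T_pM,g_R)$ then gives $|\lambda v - W_p|_R = 1$, so $\lambda|v|_R \le 1 + |W_p|_R$, i.e.
\begin{equation}\label{e_Fbound}
F(v)\ \ge\ \frac{|v|_R}{1+|W_p|_R}\qquad\text{for all }v\in A_p .
\end{equation}
This single inequality drives both parts; I would derive it cleanly first (it is essentially the computation behind \eqref{randers-kropina}, bounding the square root below and $-\omega(v)$ above using Cauchy–Schwarz).

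For part (i): the right-hand side of \eqref{e_Fbound} is precisely $\sqrt{h^*(v,v)}$ where $h^* = g_R/(1+|W|_R)^2$, as in the statement. So \eqref{e_Fbound} reads $H(v):=\sqrt{h^*(v,v)}\le F(v)$ on $A$, and $H=\sqrt{h^*}$ is a genuine (Riemannian, hence reversible) Finsler metric; since $h^*$ is assumed complete, $H$ is forward and backward complete, and Proposition \ref{p_4.2} yields that $\Sigma$ is (forward and backward) geodesically complete. This part is immediate once \eqref{e_Fbound} is in hand.

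For part (ii): now $g_R$ is complete and $|W|_R$ grows at most linearly in $d_R$. Here one cannot use $h^*$ directly (it need not be complete), so I would argue at the level of Cauchy sequences / length estimates rather than by finding a single comparison Finsler metric. Let $\gamma$ be a wind curve, or more generally any $\Sigma$-admissible curve, parametrized on $[0,\ell]$; combining \eqref{e_Fbound} with the linear-growth hypothesis,
\begin{equation}\label{e_lengthest}
\ell_F(\gamma)\ \ge\ \int_0^\ell \frac{|\gamma'(s)|_R}{1+|W_{\gamma(s)}|_R}\,ds\ \ge\ \int_0^\ell \frac{|\gamma'(s)|_R}{1+\lambda_0+\lambda_1 d_R(x_0,\gamma(s))}\,ds .
\end{equation}
Writing $\rho(s):=d_R(x_0,\gamma(s))$, one has $|\rho'(s)|\le |\gamma'(s)|_R$ a.e., so the last integrand is at least $|\rho'(s)|/(1+\lambda_0+\lambda_1\rho(s))$, whose integral is $\tfrac{1}{\lambda_1}\big|\log(1+\lambda_0+\lambda_1\rho(\ell)) - \log(1+\lambda_0+\lambda_1\rho(0))\big|$ when $\rho$ is monotone, and in general bounds $\ell_F(\gamma)$ below by a quantity that tends to $+\infty$ as $\rho(\ell)\to\infty$ (with $\rho(0)$ fixed). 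Thus: if $\{p_n\}$ is an $\bar F$-forward Cauchy sequence, then $d_{\bar F}(p_1,p_n)$ is bounded, so by \eqref{e_lengthest} the $g_R$-distances $d_R(x_0,p_n)$ are bounded, hence $\{p_n\}$ lies in a $d_R$-bounded set. By completeness of $g_R$ (Hopf–Rinow for Riemannian metrics), closed $d_R$-bounded sets are compact, so $\{p_n\}$ has a subsequence converging to some $q\in M$; Proposition \ref{r_ruf} then upgrades this to convergence of the whole sequence. Hence $(M,d_{\bar F})$ is forward Cauchy complete, and by Theorem \ref{HopfRinow} $\Sigma$ is forward geodesically complete; the backward case is identical.

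**The main obstacle** I anticipate is making the logarithmic length estimate \eqref{e_lengthest} rigorous when $\gamma$ is not radially monotone and when $\gamma$ may have vanishing velocity in the critical region (so that the $F$-length integrand must be interpreted via the conventions of Remark \ref{r_dominios}); the clean way around this is to split $[0,\ell]$ at the last time $\rho$ leaves any fixed sublevel set and to use that $s\mapsto \log(1+\lambda_0+\lambda_1\rho(s))$ is absolutely continuous with derivative bounded by the integrand in \eqref{e_lengthest}, together with the footnote's remark that using $F$ or $\bar F$ in \eqref{eH}-type estimates is equivalent. Everything else is bookkeeping with the already-established machinery (Propositions \ref{p_4.2}, \ref{r_ruf} and Theorem \ref{HopfRinow}).
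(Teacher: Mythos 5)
Your proposal is correct and essentially reproduces the paper's proof: your key inequality $F(v)\ge\sqrt{h^*(v,v)}$ is exactly the paper's observation that the indicatrix of $h^*$, a $g_R$-sphere of radius $1+|W_p|_R$ centered at the origin, encloses $\Sigma_p$, so that (i) follows at once from Proposition \ref{p_4.2}. For (ii) the paper simply notes that the hypotheses imply (i), since the divergence of $\int_0^\infty ds/(1+\lambda_0+\lambda_1 s)$ shows $h^*$ is complete --- the same logarithmic estimate you use, only packaged as a reduction to (i) instead of being rerun through the Cauchy-sequence argument of Theorem \ref{HopfRinow}.
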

\begin{proof} For $(i)$, just notice  that, at each $p\in M$, the indicatrix of $h^*$ is a $g_R$-sphere of radius $|W|_R+1$.   So, it contains $\Sigma_p$ and Prop. \ref{p_4.2} can be applied. 

The conditions in $(ii)$ imply $(i)$. Indeed the metric  $h^*$  is conformal now to the complete one $g_R$ with a conformal factor $\Omega=1/(1+|W|_R)^2$  which decreases at most quadratically with the distance. So, it is well-known that $h^*$ is then complete (namely, if $\gamma: [0,\infty)\rightarrow M$ is any diverging curve parametrized with unit $g_R$-velocity, then its $h^*$-length satisfies 
$\int_0^\infty h^*(\gamma'(s), \gamma'(s))^{1/2}ds\geq 
\int_0^\infty \frac{ds}{1+\lambda_0+\lambda_1 s}=\infty$.
\end{proof}


 A more accurate  consequence of Proposition \ref{p_4.2}   is the following.
\begin{prop}\label{c_4.3}
 A  WRS $\Sigma$  with Zermelo data $(g_R,W)$  is complete if so is  the Riemannian metric 
$$
h =  g_R - 
\frac{1}{1+|W|_R^2} W^\flat \otimes W^\flat ,
$$
 where $W^\flat$ is computed with $g_R$, namely, $W^\flat(v)=g_R(v,W)$ for all $v\in TM$.  
\end{prop}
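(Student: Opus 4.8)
The plan is to produce an auxiliary Riemannian metric that is bounded above, as a Finsler norm, by the conic Finsler metric $F$ of $\Sigma$ and whose completeness is equivalent to that of $h$, and then to invoke Proposition~\ref{p_4.2}. First I would record that $h$ really is a Riemannian metric: by Cauchy--Schwarz, $W^\flat(v)^2=g_R(W,v)^2\le|W|_R^2\,g_R(v,v)<(1+|W|_R^2)\,g_R(v,v)$ for every $v\neq 0$, so $h(v,v)\ge g_R(v,v)/(1+|W|_R^2)>0$; thus $h$ is positive definite and its geodesic completeness is meaningful. (One also checks $h\ge h^{*}$ for the conformal metric $h^{*}=g_R/(1+|W|_R)^2$ of Proposition~\ref{p_4.3}, which is the precise sense in which the present criterion is ``more accurate''.)

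The heart of the matter is the pointwise estimate
\[
\tfrac{1}{\sqrt2}\,\sqrt{h(v,v)}\ \le\ F(v)\qquad\text{for every }v\in A ,
\]
equivalently $h(v,v)\le 2\,F(v)^2$, where $F$ is given by \eqref{randers-kropina} with the normalization $g_0=g_R$, $\omega=-g_R(W,\cdot)$, $\Lambda=1-|W|_R^2$. Since $F(v)$ and $h(v,v)$ depend on $v$ only through $a:=g_R(v,v)$, $b:=W^\flat(v)$ and the scalar $w:=|W|_R^2$ (with $b^2\le wa$), this is an elementary inequality in these quantities. I would write $F(v)=a/(b+c)$ with $c:=\sqrt{\Lambda a+b^2}\ge 0$, note that $h(v,v)=\big((1+w)a-b^2\big)/(1+w)=(2a-c^2)/(1+w)$ (using $b^2=c^2-(1-w)a$), and then, after clearing denominators and introducing the normalized denominator $x:=(b+c)/\sqrt a$ of $F$ (so $F(v)=\sqrt a/x$ and $x\cdot(c-b)/\sqrt a=1-w$), the inequality should collapse to a perfect square of the shape $\big(x^2-(3+w)\big)^2\ge 0$, hence hold with equality exactly when $(b+c)^2=(3+w)a$ — in particular identically on the critical region. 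The same algebraic identity applies verbatim on the strong--wind cones $A_p$, so the restriction of the domain of $F$ there causes no difficulty.

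Then I would set $H:=\tfrac{1}{\sqrt2}\sqrt h$, i.e.\ the norm of the Riemannian metric $\tfrac12 h$: this is a reversible Finsler metric, and it is (forward and backward) complete precisely when $h$ is, since it differs from $h$ only by a constant conformal factor. By the previous step $H(v)\le F(v)$ on $A$, and this extends to the whole extended domain $A\cup A_E$ exactly as in the footnote to Proposition~\ref{p_4.2} (by continuity on the boundary $A_E\setminus A$, and trivially at the critical zeroes, where $\bar F=1$ but $H(0)=0$). Proposition~\ref{p_4.2} then gives that completeness of $h$ forces $\Sigma$ to be forward and backward geodesically complete, i.e.\ complete.

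The step I expect to need the most care is the pointwise estimate, and in particular the observation that the constant $\tfrac1{\sqrt2}$ is genuinely necessary: one cannot have $h\le F^2$, which would require the indicatrix ellipsoid of $h$ to enclose the $g_R$-sphere $\Sigma_p$, and this already fails along $W_p$, where $F(W_p)=|W_p|_R/(1+|W_p|_R)$ whereas $\sqrt{h(W_p,W_p)}=|W_p|_R/\sqrt{1+|W_p|_R^2}>F(W_p)$. The coefficient $1/(1+|W|_R^2)$ in $h$ is exactly the one for which $\tfrac12 h$ is the largest ``$W$-squeezed'' metric still comparable to $F^2$, and the constant $1/\sqrt2$ turns out to be sharp, with equality on the whole critical region and along suitable directions in the strong--wind region.
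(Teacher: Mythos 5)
Your proposal is correct and follows essentially the same route as the paper: both apply Proposition~\ref{p_4.2} to $H=\sqrt{h/2}$, the only difference being that you verify $h(v,v)\le 2F(v)^2$ by direct algebra on \eqref{randers-kropina} (your claimed perfect square $\bigl(x^2-(3+w)\bigr)^2\ge 0$ does check out), whereas the paper verifies the equivalent statement more quickly by evaluating $h$ on the indicatrix points $W+U$ with $|U|_R=1$ and maximizing the resulting quadratic in $g_R(W,U)$. One immaterial slip: equality is not attained ``identically on the critical region'' but only along the ray spanned by $W_p$ there, since for $|W_p|_R=1$ your condition $(b+c)^2=(3+w)a$ reduces to $b^2=a$, i.e.\ $v\parallel W_p$ by Cauchy--Schwarz.
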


\begin{proof} 
Taking into account that the completeness of $h$ and $h/2$ are equivalent, 
 we have just to check that
$ h(W+U,W+U)\leq 2$, where $U$ is any $g_R$-unit vector field:
$$\begin{array}{l}
h(W+U,W+U)  =  |W|_R^2+1+2g_R(W,U)-\frac{\left( |W|_R^2+g_R(W,U)\right)^2}{1+|W|_R^2}\\
 = \frac{1}{1+|W|_R^2}
\left(\left(1+|W|_R^2\right)^2+ 2  (1+|W|_R^2) \, g_R(W,U)
-\left(|W|_R^2+g_R(W,U)\right)^2\right)\\
=\frac{1}{1+|W|_R^2}
\left(1+2|W|_R^2 + 2 g_R(W,U)-g_R(W,U)^2\right) ,
\end{array}
$$
where the last parenthesis can be regarded as a quadratic polynomial in $g_R(W,U)$. 
This takes its maximum when $g_R(W,U)=1$, yielding so the  required inequality.  \end{proof}

\begin{rem}  One can also take  other choices of Riemannian metric, 
which are not conformal to $g_R$ but may be better adapted 
to the shape of $\Sigma$. For example, given $\Sigma $ as 
in the proposition above, it is not hard  to check that $\Sigma$ is enclosed by the indicatrix of the following metric   
\begin{equation} \label{ehl}
h_\lambda = \frac{1}{\lambda^2} \left(g_R - 
\frac{1}{\lambda^2-1+|W|_R^2} W^\flat \otimes W^\flat\right),
\end{equation}
where $\lambda: M\rightarrow \R$ is any function satisfying $\lambda>1$ (indeed, the metric $h$ in 
Proposition \ref{c_4.3} is just $h=\lambda^2h_{\lambda}$ for $\lambda=\sqrt{2}$). Therefore, the completeness of $h_\lambda$ for such a function  implies that $\Sigma$ is complete too. 
Notice that, when $\lambda$ is close to  $1$, the metric $h_\lambda$ is very close to $g_R$ in the directions orthogonal to $W$ (this may be an advantage)
but not in the direction of  $W$, as $h_\lambda(W,W)$ becomes very small (this may be a disadvantage); the situation is the other  way around for big $\lambda$.
\end{rem}


Recall that the  easier   the application of the previous criteria, the  weaker   the result. Indeed,  in Proposition \ref{p_4.3},  the criterion $(ii)$ implies $(i)$ (as seen explicitly in the proof), the latter implies the completeness of $h$ (as $h^* \leq h$),  
and the completeness of $h$ also implies the completeness of $g_R$ (as $g_R\geq h$); however, Proposition \ref{p_4.2} can be applied even when $g_R$ is not complete. The next examples illustrate these possibilities.


\begin{exe} 
{\em Sharpness of the rough bounds.}   Let  $(M,g_R)= \R^2$, $W_{(x,y)}=f(x,y)\partial_x$, for some smooth function $f$ on $\R^2$.
 
 {\em Bound (ii)}. Choose $r\in \R$ and put
  $f(x ,y)=|x|^r$ whenever $|x|\geq 1$. When $r\leq 1$, the growth of $W$ is at most linear and thus, the corresponding WRS is complete. However, if $r>1$ the WRS is incomplete. Indeed, the inextendible  curve $[0,L) \ni s\mapsto (x(s),0)$ with $x(0)=1$ and  $x'(s)=1+x^r(s)$ diverges (it escapes from any compact subset), it is $\bar F$-unit and has finite length,  $L=\int_1^\infty dx/(1+  x^r)  <\infty$. 
  
  {\em Bound (i)}. Choose $f \equiv 0$ except in the squares $(n-1/n^4, n+1/n^4)\times (-1,1)$ where $|f|$  reaches the maximum $n^2$. Now,  $(i)$ is fulfilled but not $(ii)$.   
 
 {\em Bound with $h$}.  Put $f(x,y)=x e^y$. The metric $h^*$ is not complete (say, the curve $[0,\infty)\ni s\mapsto  (1,s)$
 has finite length), but $h$
(as well as $h_\lambda$ for any constant $\lambda>1$) is complete. Indeed,
$$ h=\frac{1}{1+x^2 e^{2y}}dx^2+dy^2,  $$
  thus, if $\gamma(s)=(x(s),y(s))$ is a diverging curve and $y$ is unbounded (resp. $|y|$ is bounded by some  $C>0$),  then its length is infinite because it is lower bounded  by, say,  $\int |y'(s)| \, ds$ (resp. $(\int  (|x'(s)| \, /\sqrt{1+x(s)^2e^{2C}} ) ds\ >  e^{-C} \int dx/\sqrt{1+x^2}$). 
 
{\em Bound with Finslerian $H$}. Proposition~\ref{p_4.2} 
should be applied when the bound with a Riemannian metric $h$ in Propositions \ref{p_4.3} and  \ref{c_4.3}  imply a loss of sharpness. Indeed, modify the example above putting $(M,g_R)= \R^+:=\{x\in\R: x>0\}$, $W_{x}=f(x)\partial_x$, for some smooth function $f$ such that $ 1-x^2  \leq f(x)\leq 2$ on $(0,1]$  and $f$ 
  is 0  on $[2,\infty)$.  	
  The incompleteness of $g_R$ close to the origin makes it impossible to apply Propositions \ref{p_4.3} or \ref{c_4.3}. However, one can obtain the  forward   completeness of the WRS by applying Proposition  \ref{p_4.2} to the non-reversible Finsler metric:
$$
H_x(v)= \left\{
\begin{array}{ll}
 v/3  & \forall v\geq 0 \\
-v/x^2 & \forall v\leq 0
\end{array}
\right.  \qquad \qquad x\in \R^+, \; v\in T_x\R^+ .
$$
The  forward  completeness of $H$ follows because its unique unit  geodesic $[0,L)\ni s \mapsto x(s)$ with $x(0)=1, x'(0)<0$, satisfies  $x'(s)=-x^2(s)$, thus $L=\int_0^Lds=\int_0^1 dx/x^2=\infty$. 
\end{exe}

\begin{rem}  In terms of the SSTK splitting \eqref{e_SSTKsplitting} and using the usual index notation $\omega_i= -(g_0)_{ij}W^j$, $(g_R)_{ij}=(g_0)_{ij}/(\Lambda+\omega_k \, \omega^k)$,  the metrics $h$ and $h^*$ read  
$$\begin{array}{rl} 
h_{ij}= & 
\frac{(g_0)_{ij}}{\Lambda+\omega_k \,  \omega^k} -
\frac{1}{1+\frac{\omega_k \,  \omega^k}{\Lambda+\omega_k \,  \omega^k}}
\frac{\omega_i \, \omega_j}{(\Lambda+\omega_k \,  \omega^k)^2} 
\equiv 
(g_0)_{ij} - 
\frac{\omega_i \, \omega_j}{1+\omega_k \,  \omega^k} 
 \\
h^*_{ij}= & \frac{(g_0)_{ij}}{\left(\sqrt{\Lambda+\omega_k \,  \omega^k}+ \sqrt{\omega_k \,  \omega^k}\right)^2} \equiv \frac{(g_0)_{ij}}{\left(1+ \sqrt{\omega_k \,  \omega^k}\right)^2}
\end{array} $$
where  the indices are raised and lowered with $g_0$ and  the last expression in each equality holds under the choice $\Lambda+\omega_k \,  \omega^k \equiv 1$   in the conformal class of $g$.

\end{rem}

\subsection{Further examples: ergospheres and Killing horizons}\label{s_4.2}
Consider the Lorentzian metric $g$ on  $\R\times \R^3$ in spherical coordinates $(t,r,\theta,\varphi )$,  
\[g= -\Lambda(r) dt^2 + dr^2 + r^2 d\theta^2 + g_{t\theta}(r) (dt d\theta + d\theta dt) 
  + r^2\sin^2\theta d\varphi^2 
\] 
where, for $r\in (1/2,3/2)$, we will  choose $g_{t \theta}(r)=1$ and $\Lambda(r)= (r-1)^m$ for some $m=1, 2, \dots$.  As we will focus on the hypersurface $r=1$, we will assume that the metric matches with Lorentz-Minkowski $\LL^4$ for $|r-1|>2/3$.

First, notice that this is an SSTK spacetime where
$g_0$ is the usual metric of $\R^2$ and $\omega=g_{t\theta}(r)d\theta$. Thus, $|\omega|_0=|g_{t\theta}(r)|/r$, and
$$
W=-\frac{1}{r^2}\partial_\theta , \qquad 
g_R=\frac{r^2}{1+r^2(r-1)^{m}} g_0 \qquad \qquad \hbox{when} \; r\in (1/2,3/2).
$$
Due to the fact that $g_R$ is complete (it agrees with $g_0$ outside the compact subset $|r-1| \leq 2/3$) and  $|W|_{R}$  is bounded, any of the criteria in the previous section implies that the slices of $t$ are Cauchy hypersurfaces.

The (hyper)surface $S$ given as $r=1$  can be seen as  an {\em ergosphere}, because  the sign of $\Lambda$ changes there. This hypersurface is always timelike and so, neither the exterior region $r>1$ nor the interior one $r<1$ are globally hyperbolic. 
One can also check  that the slices of $t$ are not Cauchy hypersurfaces for these regions by using the $\bar F$-separation.  Indeed,  consider the region  $r\in (1-\epsilon, 1+\epsilon)$ for small $\epsilon>0$.  The vector field $Z=(\partial_r -  \partial_\theta)  /2$ lies inside $\Sigma$ because $\Sigma$ is just obtained by taking the usual $g_0$-unit bundle and displacing  it with $-\partial_\theta/r^2$; thus, the $\bar F$-length of $Z$ is smaller than one. As the integral curves of $Z$ must cross $S$, they yield  non-compact $\bar F$-bounded subsets for both, the inner and the outer regions. 
 Extending our computations (see the next example), it is not difficult to check  also  the lack of global hyperbolicity by using the $\bar F$ distance. 

Even though we have focused on completeness and Cauchy hypersurfaces, other properties of causality can be studied, suggesting that  $\bar F$ can  also be useful beyond our scope in  this article.  A computation shows
\begin{equation}\label{e_acceleration}
\nabla_{\partial_t } \partial_t = \Gamma_{tt}^r\partial_r= -\frac{1}{2} g^{rr}\frac{\partial g_{tt}}{\partial r} \partial_r= \frac{1}{2}  \frac{\partial \Lambda}{\partial r} \partial_r = \frac{m}{2} (r-1)^{m-1} \partial_r , \qquad r\in (1/2,3/2).
\end{equation}
Thus, when $m>1$, the integral curves of $\partial_t$ are 
geodesics, but when $m=1$ they are not. This property is related 
to the light-convexity of $S$ (see \cite{CGS} for background) and, then, to the 
causal simplicity of the regions $r>1$ and $r<1$. Indeed, for 
$m=1$, the inner region $r<1$ cannot be causally simple, as there 
are  lightlike geodesics starting at  this region that touch $S
$ and come back to the inner region (those geodesics of 
the spacetime with  initial velocity parallel to $\partial_t$ on 
$S$)\footnote{To understand this easily,  \eqref{e_acceleration} 
implies that the integral curves of $\partial_t$ are accelerated 
upwards, so, geodesics with initial velocity in $\partial_t$ 
should come from and go into inwards. 
Analytically, if $\rho(s)=(t(s),r(s),\theta(s),\varphi(s))$ is such a 
geodesic, at $s=0$ one has $r'(0)=\theta'(0)=\varphi'(0)=0$, thus, $r''(0)+
\Gamma_{tt}^r (t'(0))^2=0$ and, so, $r''(0)<0$. 
}. Such a property also implies the lack of w-convexity of the inner region (and, in a natural sense, the lack of $\bar F$-convexity of $S$), which characterizes causal convexity in terms of  $\Sigma$.

Finally, consider the following variation of the previous example:
\[g= -\Lambda(r) dt^2 + dr^2  + g_{tr}(r) (dt dr + dr dt) + r^2 d\theta^2 + r^2\sin^2\theta d\varphi^2 
\] 
where, again,  we  choose $\Lambda(r)= (r-1)^m$ for $m=1, 2, \dots$, 
$g_{t r}(r)=1$   when  $r\in (1/2,3/2)$ and  $\LL^4$  when  $|r-1|>2/3$. Now, one has:
$$
W=-\partial_r , \qquad 
g_R=\frac{1}{1+(r-1)^{m}} g_0 \qquad \qquad \hbox{when} \; r\in (1/2,3/2).
$$
As in the previous case, the slices of $t$  are  Cauchy hypersurfaces for the full spacetime.

Now, the surface $S$ given as $r=1$ is a {\em null hypersurface}, as $g$ becomes degenerate there; even more, it  can be regarded as a  Killing horizon.  If one considers only the inner $r<1$ or outer $r>1$ regions, again, the slices $t=$ constant are not Cauchy. 
However, these regions are globally  hyperbolic  (this property goes a bit beyond our previous study,  but  it shows further applications of $\bar F$).   In fact, for, say,  the region $r>1$,  a closer look at the incompleteness of $\bar F$ shows that $\bar F$ is forward incomplete but backward complete.  
 In order to check this,  the relevant curves can be taken as   $(s_-,s_+)\ni s\mapsto (r(s),\theta_0, \varphi_0)$ with $\theta_0, \varphi_0$ constants and $r(0)>1$.  From \eqref{randers-kropina}, the unit curves (necessarily $\bar F$-geodesics)\footnote{Recall that all the direct computations in this example (either for $\bar F$ or for other more classical procedures to study global hyperbolicity) become especially simple,  because the $M$ part of the SSTK spacetime is essentially 1-dimensional, as the  coordinates $\theta, \varphi$ do not play any relevant role.}  
satisfy $1= r'(s)/(-1+\epsilon \sqrt{1+(r-1)^m})$ where $\epsilon=$ sign$(r'(s))\in \{\pm 1\}$  when $r\in (1/2,3/2)$ and they are the Euclidean unit geodesics if $|r-1|>2/3$. Clearly the $\bar F$-geodesics with $\epsilon=1$ (resp. $\epsilon=-1$) are forward (resp. backward) complete. Moreover, the geodesic with $\epsilon=-1$ is also clearly forward incomplete. However, the geodesics with $\epsilon=1$ are backward complete.  Indeed, assuming $r(0)=2$ (as $r'$ cannot vanish one can focus only in one geodesic),  \begin{multline*}
s_-=\int_2^1   dr /(-1+\sqrt{(r-1)^m+1}))\\=  -\int_1^2 (1+\sqrt{(r-1)^m+1}) \, dr/(r-1)^m \\ \leq  -\int_1^2 dr/(r-1)^m = -\infty,\end{multline*} as required. 
 These properties of completenes are    sufficient  for the compactness of the intersections between the forward and backward $\bar F$-balls and, then, for the compactness of the corresponding intersections of the $\Sigma$-balls, the latter property being  a   characterization of global hyperbolicity,  as proven in\footnote{In any case,  the readers used to the stuff in Mathematical Relativity can reason alternatively that these regions are globally hyperbolic because they admit $S$ as a  conformal boundary with no timelike points, which is a known characterization of global hyperbolicity (see \cite[Corollary 4.34]{FHSatmp} for a precise formulation of this result).  } \cite[Th. 5.9]{CJSwind}.   

Being the inner and outer regions globally hyperbolic, they are causally simple too. However, it is interesting to consider again the lightlike  geodesics of the spacetime tangent to\footnote{The fact that they remain in $S$ implies its light-convexity with respect to the  inner and outer regions and, then, the causal simplicity of these regions, see \cite{CGS}.} $S$. First a straightforward computation shows:
\begin{equation}\label{e_acceleration2}
\nabla_{\partial_t } \partial_t = \Gamma_{tt}^t \partial_t + \Gamma_{tt}^r \partial_r  =  \frac{g_{tr}}{2 (\Lambda +g_{tr}^2)} \frac{\partial \Lambda}{\partial r} \partial_t + \frac{\Lambda}{2( \Lambda +g_{tr}^2)}  \frac{\partial \Lambda}{\partial r} \partial_r.
\end{equation}
So,  one has $\nabla_{\partial_t } \partial_t =  \frac{1}{2} \frac{\partial \Lambda}{\partial r} \partial_t$  on $S$. This means that the integral curves of $\partial_t$  (which  are  the null generators of $S$)  become geodesics when $m>1$ and pregeodesics when $m=1$.  These geodesics plus the ones in the previous example fulfil all  the possible types of lightlike geodesics orthogonal to $K=\partial_t$, described in part (3) of \S \ref{s_313}  (see Lemma 3.21, Th. 6.3(c) in
\cite{CJSwind} for details). 

Summing up, even though the previous examples are very simple and can be handled directly  by means of the explicit computations of lightlike geodesics, causal futures etc., they show the applicability of  both, the general methods introduced in \cite{CJSwind}  and  the additional tools and criteria introduced here,  which are  valid for general SSTK spacetimes with no restrictions on energy conditions, asymptotic behaviors,  etc.

\section*{Acknowledgements}

   Partially
supported by Spanish  MINECO/FEDER project reference
MTM2015-65430-P and Fundaci\'on S\'eneca
(Regi\'on de Murcia) project 19901/GERM/15  (MAJ) and MTM2013-47828-C2-1-P (MS).

\end{document}